\documentclass[11pt]{article}
\usepackage{amsmath}
\usepackage{amsfonts}
\usepackage{amssymb}
\usepackage{mathrsfs}
\setcounter{page}{1}
\usepackage{latexsym}
\usepackage{cite}
\usepackage[dvips]{graphicx}
\usepackage{multirow}
\usepackage{setspace}
\usepackage{amsopn}
\usepackage{hyperref}
\allowdisplaybreaks
\setlength{\topmargin}{0in} \setlength{\oddsidemargin}{0cm}
\setlength{\textheight}{22.38cm} \setlength{\textwidth}{16cm}
\newtheorem{theorem}{\bf Theorem}[section]
\newtheorem{lem}[theorem]{\bf Lemma}

\newtheorem{defn}[theorem]{\bf Definition}
\newenvironment{proof}{\noindent{\em Proof.}}{\quad \hfill$\Box$\vspace{2ex}}

\newenvironment{remark}{\noindent{\bf Remark}}{\vspace{2ex}}

\makeatletter 
\@addtoreset{equation}{section}
\makeatother  

\newcommand*{\dif}{\mathop{}\!\mathrm{d}}

\def \bR {\Bbb R}
\def \bN {\Bbb N}
\def \bZ {\Bbb Z}

\def \cG {\mathcal{G}}

\def \cF {\mathcal{F}}
\def \cL {\mathcal{L}}

\def \l {\left}
\def \r {\right}
\def \pa {\partial}

\def \lea {\lesssim}
\def \gea {\gtrsim}

\def \w {\widetilde}
\def \p {^\prime}
\def \ep {\epsilon}
\def \La {\Lambda_\lambda}

\begin{document}

\title{{\bf $L^2$ estimates of trilinear oscillatory integrals of convolution type on $\bR^2$}}

\author{Yangkendi Deng \thanks{School of Mathematical Sciences, University of Chinese Academy of Sciences, Beijing 100049, China. E-mail address: {\it dengyangkendi17@mails.ucas.edu.cn}.}\,\,\,,\quad Zuoshunhua Shi \thanks{School of Mathematics and Statistics, Central South University, Changsha, People's Republic of China. E-mail address: {\it shizsh@163.com}.}
\quad and \quad Dunyan Yan\thanks{School of Mathematical Sciences, University of Chinese Academy of Sciences, Beijing 100190, P. R. China. E-mail address: {\it ydunyan@ucas.ac.cn}.} }
	
\date{}
\maketitle

\noindent{{\bf Abstract}} \quad This paper is devoted to $L^2$ estimates for trilinear oscillatory integrals of convolution type on $\mathbb{R}^2$. The phases in the oscillatory factors include smooth functions and polynomials. We shall establish sharp $L^2$ decay estimates of trilinear oscillatory integrals with smooth phases, and then give $L^2$ uniform estimates for these integrals with polynomial phases.
\medskip

\noindent{{\bf MR(2020) Subject Classification}}  42B20 47G10

\noindent{{\bf Keywords}} \quad  Trilinear oscillatory integrals, Smooth phases, Sharp decay, Uniform estimates, Resolution of singularities.

\section{Introduction} \label{introduction}

Consider the following trilinear oscillatory integrals of convolution type,
\begin{equation}\label{def}
\Lambda_\lambda(f_1,f_2,f_3)=\iint_{\bR^2} e^{i\lambda S(x,y)}f_1(x)f_2(y)f_3(x+y) \varphi(x,y) \dif x \dif y ,
\end{equation}
where $\lambda\in \bR$ is a parameter, $S$ is a real-valued smooth phase defined in a neighborhood of the origin, and $\varphi \in C_0^\infty(\bR^2)$ is a cut-off function near the origin.

If $S(x,y)\equiv 0$ and $\varphi$ is removed, then $\Lambda_\lambda(f_1,f_2,f_3)$ is equal to $\int_\bR f_1(x) (\w{f_2}\ast f_3(x))dx$, where $\w{f_2}(x)=f_2(-x)$ and $\w{f_2}\ast f_3$ is the convolution of $\w{f_2}$ and $f_3$. The boundedness of $\Lambda_\lambda$ in Lebesgue spaces is a consequence of Young's inequality.

Our purpose is to establish $L^2$ decay estimates of $\Lambda_\lambda$ when the phase function $S$ is non-degenerate. In other words, we are going to prove decay estimates of the following type:
\begin{equation}\label{decay estimate}
|\Lambda_\lambda(f_1,f_2,f_3)| \le C |\lambda|^{-\ep} \|f_1\|_2 \|f_2\|_2 \|f_3\|_2,
\end{equation}
where $\ep$ is a positive exponent, and $\|f_i\|_2$ is the $L^2$ norm of $f_i$.

A more general framework of multi-linear oscillatory integrals was studied in the seminal work by Christ, Li, Tao, and Thiele\cite{Christ-Li-Tao-Thiele}. It takes the form
\begin{equation}\label{general framework}
 \Lambda_\lambda(f_1,f_2,\cdots,f_n)=\int_{\bR^m}e^{i\lambda P(x)}\prod_{j=1}^{n}f_j(\pi_j(x))\eta(x) \dif x,
\end{equation}
where $\lambda\in \bR$ is a parameter, $P\in \bR[x_1, \cdots, x_m]$ is a real polynomial, $\pi_j : \bR^m \to V_j$ is a surjective linear mapping from $\bR^n$ onto some subspaces $V_j$ of $\bR^m$, and $\eta\in C_0^\infty(\bR^m)$ is a smooth cut-off function. In \cite{Christ-Li-Tao-Thiele}, all subspaces $V_j$ are assumed to have the same dimension. One of the main results in \cite{Christ-Li-Tao-Thiele} is the following theorem.
\begin{theorem}\label{theorem of general framework}
{\rm(\noindent\cite{Christ-Li-Tao-Thiele}~)}
Assume $n<2m$, $dimV_j=1$, and all $V_j$ lie in general position. If $P\in \bR[x_1, \cdots, x_m]$ is a real polynomial which is non-degenerate with respect to $\{\pi_j\}_{j=1}^n$, then
$$ |\Lambda_\lambda (f_1, f_2, \cdots, f_n)| \le C |\lambda|^{-\ep} \prod_{j=1}^{n} \|f_j\|_2, $$
where $\ep>0$ depends only on $n, m, \deg{P}$ and $\{\pi_j\}_{j=1}^n$. Moreover, this estimate is uniform if $degP$ is bounded above by a fixed number $d$, and the non-degenerate norm of $P$ has a uniform positive lower bound.
\end{theorem}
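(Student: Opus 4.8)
The plan is to prove the estimate by induction on the number $n$ of functions, carrying the uniform dependence of the constants on a positive lower bound $\tau$ for the non-degeneracy norm through every step. I would start with routine reductions. A linear change of variables puts $\pi_n(x)=x_1$, so we write $x=(x_1,x')$ with $x'\in\bR^{m-1}$ and $\pi_j(x)=\alpha_j x_1+\langle\beta_j,x'\rangle$ for $j<n$, where general position forces $\beta_j\neq 0$. The regime $|\lambda|\le 1$ is disposed of by the boundedness of the underlying non-oscillatory $n$-linear form, which holds exactly because $n<2m$ places $\{\pi_j\}$ with $L^2$ exponents in the strictly subcritical Brascamp--Lieb range. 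The base case is $n=0$, a scalar oscillatory integral $\int e^{i\lambda\Phi(x)}\eta(x)\dif x$ with $\Phi$ a non-constant polynomial; here resolution of singularities and the Newton polyhedron method give $|\lambda|^{-\ep}$ decay with $\ep$ and the implied constant controlled by $\deg\Phi$ and the size of the non-constant part of $\Phi$. The exponent will shrink geometrically in $n$ but remain positive, and the constant will grow like a fixed power of $\tau^{-1}$.

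For the inductive step I would peel off $f_n$ by Cauchy--Schwarz in the single variable $x_1$: with $G_\lambda(x_1)=\int_{\bR^{m-1}}e^{i\lambda P(x_1,x')}\prod_{j<n}f_j(\pi_j(x_1,x'))\,\eta\dif x'$ one has $|\La(f_1,\dots,f_n)|^2\le\|f_n\|_2^2\int_{\bR}|G_\lambda(x_1)|^2\dif x_1$. Opening the square with a second copy $y'\in\bR^{m-1}$ of the transverse variables turns $\int|G_\lambda(x_1)|^2\dif x_1$ into an oscillatory integral on $\bR^{2m-1}$ with polynomial phase $\w P(x_1,x',y')=P(x_1,x')-P(x_1,y')$, in which the functions enter only through the coupled products $f_j(\pi_j(x_1,x'))\overline{f_j(\pi_j(x_1,y'))}$. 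The decisive move is to read each such product as a single function $F_j$ on $\bR^2$ pulled back by the surjective linear map $(x_1,x',y')\mapsto(\pi_j(x_1,x'),\pi_j(x_1,y'))$, for which $\|F_j\|_{L^2(\bR^2)}=\|f_j\|_2^2$. This exhibits $\int|G_\lambda|^2\dif x_1$ as an instance of the same object with only $n-1$ functions --- now attached to $2$-dimensional subspaces, in ambient dimension $2m-1$ --- and $n<2m$ is exactly the inequality that persists as strict subcriticality for the new datum. The dimension of the subspaces at most doubles at each step, so what the induction actually establishes is a version of the theorem for subspaces of arbitrary dimension; granting that, the inductive hypothesis gives $\int|G_\lambda|^2\dif x_1\lea|\lambda|^{-\ep'}\prod_{j<n}\|f_j\|_2^2$, hence the claim with $\ep=\ep'/2$.

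The real content, and the step I expect to absorb essentially all the difficulty, is the algebraic one: to show that $\w P(x_1,x',y')=P(x_1,x')-P(x_1,y')$ is non-degenerate with respect to the new family $\{(\pi_j(x_1,x'),\pi_j(x_1,y'))\}_{j<n}$, with a lower bound on its non-degeneracy norm depending only on $\deg P$, the $\pi_j$, and a lower bound for the non-degeneracy norm of $P$. Structurally this should hold because $\w P$ has the separated shape $A(x_1,x')-A(x_1,y')$ with no monomial directly coupling $x'$ and $y'$, so any representation $\w P=\sum_{j<n}R_j(\pi_j(x_1,x'),\pi_j(x_1,y'))$ would force the parts of the $R_j$ that genuinely mix their two arguments to cancel against each other, and unravelling that cancellation yields a degeneracy relation for $P$ itself with respect to the full family $\{\pi_j\}_{j=1}^n$. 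Making this effective and uniform in $\lambda$ --- i.e. a quantitative statement about the associated degeneracy ideals, stable under the geometric growth of the ambient dimension along the induction --- is where the work lies; the same point is what guarantees that the iterated phase reaching the base case is non-constant. The remaining items are bookkeeping: re-verifying general position for the doubled family of projections, controlling the at most $2^n$-fold growth of the subspace dimensions so the base-case estimate still applies, and assembling the uniform constant via a {\L}ojasiewicz-type bound on the measure of the set where the relevant restricted non-degeneracy norm is small, optimized against a small power of $|\lambda|$. In the concrete trilinear setting on $\bR^2$ treated in the rest of this paper, these abstract steps are replaced by explicit computations with the Newton polygon of $S$, which is what makes a sharp $\ep$ accessible there.
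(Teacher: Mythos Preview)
This theorem is not proved in the present paper; it is quoted from \cite{Christ-Li-Tao-Thiele}, and the only description given here is a single sentence: the proof ``invokes the concept of $\lambda$-uniformity and reduces the multi-linear oscillatory integral to the trilinear one by induction on $n$.'' Your proposal therefore cannot be compared to a proof in this paper, only to that one-line summary of the original argument --- and the two differ substantially. The Christ--Li--Tao--Thiele scheme does not square via Cauchy--Schwarz. Instead, for the last function $f_n$ they run a dichotomy: either $f_n$ is $\lambda$-uniform (its Fourier coefficients are uniformly small relative to $\|f_n\|_2$), in which case decay follows directly, or it is not, in which case $f_n$ correlates with a single character and can be absorbed into the phase, reducing $n$ by one while keeping the subspaces one-dimensional and the ambient dimension fixed. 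The base of their induction is the genuinely trilinear case in $\bR^2$, not the scalar case $n=0$.

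Your Cauchy--Schwarz doubling is a recognisable alternative, but as written it does not close. First, after one step you land in a problem with $(n-1)$ projections onto two-dimensional subspaces of $\bR^{2m-1}$; after $k$ steps the target dimensions are $2^k$. So the inductive hypothesis you actually need is the theorem for arbitrary subspace dimensions, which is a strictly stronger statement than the one you are asked to prove, and whose correct hypotheses (what replaces ``general position'' and $n<2m$) must be formulated and checked at every step --- this is not bookkeeping. Second, the doubled maps $(x_1,x',y')\mapsto(\pi_j(x_1,x'),\pi_j(x_1,y'))$ all share the $x_1$-direction in their domain, so any reasonable notion of general position for the new family is already delicate. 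Third, and most importantly, the step you flag as ``where the work lies'' --- that $\w P(x_1,x',y')=P(x_1,x')-P(x_1,y')$ is quantitatively non-degenerate for the doubled projections --- is not a routine unwinding. Your heuristic (that mixed $x'$--$y'$ terms must cancel) does not by itself yield a lower bound on the new non-degeneracy norm in terms of the old one; producing such a bound, uniformly along the tower, is essentially the whole theorem. In the $\lambda$-uniformity approach this difficulty is sidestepped because the ambient space and the projections never change, and the phase is only modified by adding a function of a single $\pi_j$, which manifestly preserves non-degeneracy.
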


The trilinear oscillatory integral $\Lambda_\lambda(f_1,f_2,f_3)$ in (\ref{def}) is a special case of Theorem \ref{theorem of general framework}. The proof of Theorem \ref{theorem of general framework} in \cite{Christ-Li-Tao-Thiele} invokes the concept of $\lambda$-uniformity and reduces the multi-linear oscillatory integral to the trilinear one by induction on $n$.

Although a uniform decay estimate of $\Lambda_\lambda(f_1,f_2,f_3)$ has been established in Theorem \ref{theorem of general framework}, it seems that the sharp (uniform) $L^2$ decay estimate of $\Lambda_\lambda(f_1,f_2,f_3)$ may be of independent interest. In this paper, we shall prove the sharp $L^2$ decay estimates of trilinear $\Lambda_\lambda$ when the phase function $S$ is a real-valued smooth function, and then for polynomial phases satisfying certain non-degenerate assumptions, we will establish the uniform (also sharp) $L^2$ decay estimate for $\Lambda_\lambda$.

Assume $S(x,y)$ is a real-analytic function near the origin. 
Let $H(x,y)= \pa_x \pa_y (\pa_x-\pa_y) S(x,y)$. By Taylor's expansion, we have $H(x,y)=\sum_{\alpha,\beta\ge 0} c_{\alpha,\beta} x^\alpha y^\beta$ in a small neighborhood of the origin. For simplicity, assume $H(x,y)$ is not identical to zero. Otherwise, if $H(x,y)\equiv 0$, then $S(x,y)$ is degenerate in the sense that $S(x,y)=p(x)+q(y)+r(x+y)$ for some smooth functions $p, q$ and $r$, and hence there is no decay for $\La$. Let $d$ be the order of $H$ at $(0,0)$, which is defined by
\begin{equation}\label{def of d}
d=\min\{\alpha+\beta \l| \alpha, \beta \in \bN, c_{\alpha, \beta}\ne 0 \r.\}.
\end{equation}

\noindent If $S(x,y)$ is smooth and $H(x,y)= \pa_x \pa_y (\pa_x-\pa_y) S(x,y)\ne 0$ on the support of $\varphi$, Li\cite{li} proved the following theorem.
\begin{theorem}\label{theorem of Li}
{\rm(\cite{li}~)}
Assume $S(x,y)$ is a real-valued smooth function near the origin. If $H(x,y)= \pa_x \pa_y (\pa_x-\pa_y) S(x,y)\ne 0$ on the support of $\varphi$, then the trilinear function $\La$ satisfies
$$ |\La(f_1,f_2,f_3)| \le C |\lambda|^{-\frac{1}{6}} \|f_1\|_2 \|f_2\|_2 \|f_3\|_2.$$
Moreover, if $\varphi(0,0) \ne 0$ then the above estimate is sharp.
\end{theorem}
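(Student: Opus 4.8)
The plan is to pair off $f_3$ by Cauchy--Schwarz, expand the resulting square by a $TT^\ast$ argument, and thereby convert $\Lambda_\lambda$ into a one-parameter family of ordinary oscillatory integral operators on $\bR$ whose twist is governed exactly by $H$. By a smooth partition of unity one may assume $\varphi$ is supported in a ball $B$ so small that, on a neighborhood of $B$ of size comparable to $\mathrm{diam}(B)$, one has $|H|\ge c_0>0$ and $H$ has a fixed sign; this is the only place where the hypothesis $H\neq0$ on $\mathrm{supp}\,\varphi$ enters. Writing $w=x+y$ and pairing off $f_3$ gives $\Lambda_\lambda(f_1,f_2,f_3)=\int_\bR f_3(w)G(w)\dif w$ with
\[ G(w)=\int_\bR e^{i\lambda S(x,w-x)}f_1(x)f_2(w-x)\varphi(x,w-x)\dif x, \]
so it suffices to show $\|G\|_2\le C|\lambda|^{-1/6}\|f_1\|_2\|f_2\|_2$ for $|\lambda|\ge1$, the range $|\lambda|\le1$ being a trivial consequence of Young's inequality and the compact support of $\varphi$.

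Expanding $\|G\|_2^2$ and substituting $x'=x+s$, $y=w-x$ yields
\[ \|G\|_2^2=\int_\bR I_s\,\dif s,\qquad I_s:=\iint_{\bR^2}e^{i\lambda\Phi_s(x,y)}\,g_s(x)\,h_s(y)\,\varphi_s(x,y)\dif x\dif y, \]
where $g_s(x)=f_1(x)\overline{f_1(x+s)}$, $h_s(y)=f_2(y)\overline{f_2(y-s)}$, $\varphi_s(x,y)=\varphi(x,y)\overline{\varphi(x+s,y-s)}$, and $\Phi_s(x,y)=S(x,y)-S(x+s,y-s)$. Two observations drive the argument. First, $\varphi_s\equiv0$ unless both $(x,y)$ and $(x+s,y-s)$ lie in $B$, which forces $|s|\lesssim\mathrm{diam}(B)$ and keeps the segment $\{(x+t,y-t):0\le t\le s\}$ inside the neighborhood of $B$ on which $H$ is controlled. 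Second, applying the fundamental theorem of calculus to $t\mapsto S(x+t,y-t)$ gives $\Phi_s=s\,\Psi_s$ with $\Psi_s(x,y)=-\tfrac1s\int_0^s(\pa_x-\pa_y)S(x+t,y-t)\dif t$; here $\Psi_s$ and all of its derivatives are bounded uniformly in $s$, and since $\pa_x\pa_y(\pa_x-\pa_y)S=H$,
\[ \pa_x\pa_y\Psi_s(x,y)=-\tfrac1s\int_0^s H(x+t,y-t)\dif t, \]
which has modulus $\ge c_0$ on $\mathrm{supp}\,\varphi_s$. Hence $A_sh(x):=\int e^{i\lambda s\Psi_s(x,y)}\varphi_s(x,y)h(y)\dif y$ is an oscillatory integral operator on $\bR$ with effective parameter $\lambda s$ and uniformly nonvanishing mixed second derivative of the phase, so the classical $L^2$ estimate for oscillatory integral operators on $\bR$ (H\"ormander's theorem) gives $\|A_s\|_{L^2\to L^2}\le C(|\lambda||s|)^{-1/2}$, uniformly in $s$. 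Therefore $|I_s|\le\|g_s\|_2\,\|A_s\|_{2\to2}\,\|h_s\|_2\le C(|\lambda||s|)^{-1/2}\|g_s\|_2\|h_s\|_2$, while trivially $|I_s|\le\|\varphi\|_\infty^2\|g_s\|_1\|h_s\|_1\le\|\varphi\|_\infty^2\|f_1\|_2^2\|f_2\|_2^2$.

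It remains to combine the two bounds. Split $\int_\bR I_s\dif s$ at $|s|=\delta$: the region $|s|<\delta$ contributes $\lesssim\delta\,\|f_1\|_2^2\|f_2\|_2^2$ by the trivial bound, and on $|s|\ge\delta$ one uses $(|\lambda||s|)^{-1/2}\le(|\lambda|\delta)^{-1/2}$ together with Cauchy--Schwarz in $s$ and the identities
\[ \int_\bR\|g_s\|_2^2\dif s=\int_\bR|f_1(x)|^2\Big(\int_\bR|f_1(x+s)|^2\dif s\Big)\dif x=\|f_1\|_2^4,\qquad\int_\bR\|h_s\|_2^2\dif s=\|f_2\|_2^4, \]
which bound that contribution by $\lesssim(|\lambda|\delta)^{-1/2}\|f_1\|_2^2\|f_2\|_2^2$. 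Thus $\|G\|_2^2\lesssim\big(\delta+(|\lambda|\delta)^{-1/2}\big)\|f_1\|_2^2\|f_2\|_2^2$, and the choice $\delta=|\lambda|^{-1/3}$ yields $\|G\|_2^2\lesssim|\lambda|^{-1/3}\|f_1\|_2^2\|f_2\|_2^2$, hence the desired $|\lambda|^{-1/6}$ estimate. I expect the main obstacle to be conceptual rather than computational: recognizing that one should pair off $f_3$ and perform $TT^\ast$ in the remaining variable so that the third-order non-degeneracy $H\neq0$ turns into the classical twist condition $\pa_x\pa_y\Phi_s\neq0$ for an oscillatory integral operator (the identity $\pa_x\pa_y\Phi_s=-\int_0^s H(x+t,y-t)\dif t$ being the crux); after that, the $s$-decomposition balancing the non-oscillatory regime ($|s|$ small) against the oscillatory one ($|s|$ large) is routine.

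For sharpness, suppose $\varphi(0,0)\ne0$. Every quadratic polynomial in $(x,y)$ can be written as $p(x)+q(y)+r(x+y)$, so replacing $S$ by $S$ minus its degree-$2$ Taylor polynomial at the origin only multiplies $f_1,f_2,f_3$ by unimodular factors and changes neither $|\Lambda_\lambda|$ nor the norms $\|f_i\|_2$; we may therefore assume $S$ vanishes to order $3$ at the origin, so $|S(x,y)|\lesssim(|x|+|y|)^3$ near $0$. Taking $f_1=f_2=\mathbf 1_{[0,\delta]}$ and $f_3=\mathbf 1_{[0,2\delta]}$ with $\delta\sim|\lambda|^{-1/3}$ small enough that $|\lambda S|\le\tfrac1{10}$ and $\varphi\approx\varphi(0,0)$ on the relevant region, we get $|\Lambda_\lambda|\gtrsim\delta^2$ while $\|f_1\|_2\|f_2\|_2\|f_3\|_2\sim\delta^{3/2}$, hence $|\Lambda_\lambda|\gtrsim\delta^{1/2}\prod_i\|f_i\|_2\sim|\lambda|^{-1/6}\prod_i\|f_i\|_2$, which shows the exponent $1/6$ cannot be improved.
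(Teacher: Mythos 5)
Your argument is correct, and it is essentially the same $TT^\ast$ mechanism that the paper itself employs in proving its operator van der Corput lemma (Lemma \ref{van der Corput for trilinear}), which is precisely the local/uniform version of this statement: pair off one of the three factors, expand the resulting $L^2$ square with a shift variable, and observe via the fundamental theorem of calculus that the mixed second derivative of the shifted phase is $\pm\int_0^s H$, so that H\"ormander's bilinear $L^2$ estimate applies with effective parameter $\lambda s$, after which one balances against the trivial $L^1\times L^1$ bound and optimizes in $s$. The only cosmetic difference is that you pair off $f_3$ (dualizing in the sum variable $x+y$) whereas the paper pairs off $f$ (dualizing in $x$); since the three projections $x$, $y$, $x+y$ are interchanged by linear changes of variables that preserve the convolution structure, the two are equivalent.
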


In \cite{xiao}, Xiao proved the following sharp decay estimate with real-analytic phase functions.
\begin{theorem}\label{theorem of Xiao}
{\rm(\cite{xiao}~)}
Assume $S(x,y)$ is a real-valued real-analytic phase function, and $H(x,y)= \pa_x \pa_y (\pa_x-\pa_y) S(x,y)$ is a non-zero function. Then the trilinear functional $\La$ satisfies
$$ |\La(f_1,f_2,f_3)| \le C |\lambda|^{-\frac{1}{2(3+d)}} \|f_1\|_2 \|f_2\|_2 \|f_3\|_2, $$
where the order $d$ is given by {\rm(\ref{def of d})}. Moreover, this estimate is sharp provided that $\varphi(0,0)\ne 0$.
\end{theorem}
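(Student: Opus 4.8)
The plan is to reduce everything, via a resolution of singularities of $H$ together with anisotropic scalings, to the non-degenerate estimate of Li (Theorem~\ref{theorem of Li}) applied on rescaled pieces. Since $|\La(f_1,f_2,f_3)|$ is unchanged under $\lambda\mapsto-\lambda$, assume $\lambda>0$ and large; by a partition of unity it suffices to take $\varphi$ supported in an arbitrarily small ball about the origin (the same argument runs near any point at which $H$ vanishes to finite order $\le d$). First I would normalize the phase: since $\pa_x\pa_y(\pa_x-\pa_y)$ annihilates exactly the functions $p(x)+q(y)+r(x+y)$, one may subtract from $S$ an analytic function of this form and absorb the resulting unimodular factors $e^{i\lambda p(x)},\,e^{i\lambda q(y)},\,e^{i\lambda r(x+y)}$ into $f_1,f_2,f_3$ without changing their $L^2$ norms; after this, $S$ vanishes to order exactly $d+3$ and $H$ to order exactly $d$, with non-vanishing leading homogeneous parts $S_{d+3}$ and $H_d=\pa_x\pa_y(\pa_x-\pa_y)S_{d+3}$.

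Assume $d\ge1$ (for $d=0$ one has $H(0,0)\ne0$ and Theorem~\ref{theorem of Li} applies directly, giving $\lambda^{-1/6}=\lambda^{-1/(2(3+d))}$). Split the small ball into the \emph{good cone} $\Gamma_{\mathrm{good}}=\{|H_d|\ge c\,|(x,y)|^{d}\}$ and finitely many narrow cones $\Gamma_1,\dots,\Gamma_N$ about the real zero directions $\theta_j$ of $H_d$ (of multiplicities $a_j\le d$). On $\Gamma_{\mathrm{good}}$, decompose dyadically by distance to the origin, $A_k=\Gamma_{\mathrm{good}}\cap\{|(x,y)|\sim2^{-k}\}$, and on each $A_k$ perform the isotropic rescaling $(x,y)=2^{-k}(\xi,\eta)$. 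Then the contribution of $A_k$ to $\La$ equals $2^{-2k}$ times a trilinear integral of exactly the same shape, with parameter $\lambda_k=\lambda2^{-k(d+3)}$, phase $\w S_k=S_{d+3}+O(2^{-k})$ (so its associated $H$ is $\w H_k=H_d+O(2^{-k})$, which is $\gtrsim1$ on the rescaled sector once the ball is small enough), a cut-off of uniformly bounded $C^\infty$ norm, and inputs $g_i=f_i(2^{-k}\,\cdot)$ with $\|g_i\|_2=2^{k/2}\|f_i\|_2$. Applying Theorem~\ref{theorem of Li} to the rescaled integral, with a constant uniform in $k$ (the data $\w S_k,\w\varphi_k,\w H_k$ being uniformly controlled), and undoing the rescaling gives the bound $2^{kd/6}\lambda^{-1/6}\prod_i\|f_i\|_2$ for the $A_k$-piece; on the other hand its trivial bound is $2^{-k/2}\prod_i\|f_i\|_2$. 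These two quantities coincide at $k_*=\tfrac1{d+3}\log_2\lambda$, where the common value is $\lambda^{-1/(2(3+d))}$; summing the first (geometric, dominated by $k=k_*$) over $k\le k_*$ and the second (geometric, dominated by $k=k_*$) over $k>k_*$ gives the claimed estimate on $\Gamma_{\mathrm{good}}$.

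The main work is the contribution of the bad cones $\Gamma_j$. One cannot simply feed $\Gamma_j$ into Theorem~\ref{theorem of Li} with $\inf|H|\sim2^{-k}$ on $A_k\cap\Gamma_j$ as a small parameter: that only produces the exponent $\tfrac1{2(4+d)}$, strictly worse than what is wanted. Instead I would carry out a genuine resolution of singularities of the real-analytic function $H$ adapted to $\Gamma_j$. First a linear change of variables brings $\theta_j$ to a coordinate axis; this keeps us within the class of trilinear oscillatory integrals of convolution type (the three forms $x,y,x+y$ are replaced by three other linear forms in general position, and the $L^2$ norms change only by bounded factors). Then one decomposes $\Gamma_j$ by dyadic scales both in $|(x,y)|$ and in the angular distance to $\theta_j$, so that on each resulting curved box $|H|$ is comparable to an explicit monomial in the two scales; anisotropically rescaling each box to a unit square, one either applies Theorem~\ref{theorem of Li} again, or recurses — the process terminating after finitely many combinatorial steps because, in two variables, such monomial blow-ups and dyadic refinements bring the analytic $H$ to monomial-times-unit form on each final piece (this is where analyticity enters). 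The genuine obstacle is that an anisotropic rescaling can drive two of the three linear forms toward coincidence, degrading the trilinear estimate; at that point one must split further and play the oscillatory bound against the trivial bound on the (now very thin) regions. The bookkeeping — summing over the distance scales, the angular scales, the finitely many zero rays, and the recursion depth, and verifying that every term is $\lesssim\lambda^{-1/(2(3+d))}$ — is the technical heart of the proof.

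For the sharpness when $\varphi(0,0)\ne0$, take $f_i=\mathbf 1_{[0,\delta]}$ with $\delta=c_0\lambda^{-1/(3+d)}$ and $c_0$ small: on the triangle $\{x,y\ge0,\ x+y\le\delta\}$ one has $|\lambda S|\lesssim\lambda\delta^{d+3}\le\tfrac12$, so $\mathrm{Re}\,\La(f_1,f_2,f_3)\gtrsim\delta^2$, while $\prod_i\|f_i\|_2=\delta^{3/2}$; the ratio is $\sim\delta^{1/2}\sim\lambda^{-1/(2(3+d))}$.
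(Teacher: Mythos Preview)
This theorem is quoted from \cite{xiao} and not proved in the paper; the relevant comparison is with the proof of Theorem~\ref{Th-main}, which generalizes it to smooth phases and contains the real-analytic case.

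Your good-cone argument and the sharpness example are correct. The isotropic dyadic rescaling on $\{|H_d|\gtrsim|(x,y)|^d\}$, followed by Theorem~\ref{theorem of Li} on each shell, is essentially the paper's computation in \S2.3 (done there via the Newton-polygon vertices and a bi-dyadic decomposition in $x$ and $y$, but the arithmetic and the resulting exponent are identical).

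The bad-cone step, however, has a genuine gap. You correctly flag the obstruction --- an anisotropic rescaling in a narrow sector drives two of the three linear forms toward coincidence, so Theorem~\ref{theorem of Li} no longer applies with a uniform constant --- but your proposed cure (``split further and play the oscillatory bound against the trivial bound'') is exactly where the argument breaks. The paper is explicit about this at the start of \S2.4: when the relevant Newton edge has slope $-1/M_i$ with $M_i\ne1$, a direct resolution of singularities in the original coordinates ``does not exploit fully the convolution structure of $\Lambda_\lambda$, and a logarithmic term $\log|\lambda|$ will appear''; one does \emph{not} reach the sharp exponent this way. The paper's actual fix, absent from your outline, is the specific change of variables $u=x$, $v=x+y$. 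This (i) preserves the convolution structure exactly, since the integrand becomes $f_1(u)f_2(v-u)f_3(v)$, and (ii) carries the thin cone $\{|y|\le2\epsilon x\}$ --- where all the $M_i>1$ singular curves live --- onto a cone about the diagonal $\{v=u\}$. In the new coordinates only the slope $-1$ edge of $N(\widetilde H)$ is relevant, so the resolution becomes effectively isotropic and the summation over scales closes without the logarithm. Your generic linear change ``bringing $\theta_j$ to a coordinate axis, then anisotropically rescaling and recursing'' does not achieve this reduction; without it, each level of the recursion inherits the same $\log|\lambda|$ defect, and the scheme does not terminate at the sharp rate.
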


A question arises naturally whether Theorem \ref{theorem of Xiao} is still valid if the phase function $S$ is merely smooth. One of our purposes is to give a positive answer to this question. Assume $S(x,y)$ is smooth and $H(x,y)= \pa_x \pa_y (\pa_x-\pa_y) S(x,y)$ has a non-zero Taylor's expansion $H(x,y) \sim \sum_{\alpha,\beta \ge 0} c_{\alpha,\beta} x^\alpha y^\beta$. It should be pointed out that there is no decay for $\La$ if $\pa_x^\alpha \pa_y^\beta H(0,0)=0$ for all $\alpha, \beta \ge 0$. We say that $H$ is of finite type at $(0,0)$ if $\pa_x^\alpha \pa_y^\beta H(0,0)\ne 0$ for some non-negative integers $\alpha$ and $\beta$. Let $d$ be the order of $H$ at $(0,0)$, defined as in (\ref{def of d}), if $H$ is of finite type. Then one of our main results is
\begin{theorem}\label{Th-main}
Assume $S$ is a real-valued smooth function, and $H(x,y)= \pa_x \pa_y (\pa_x-\pa_y) S(x,y)$ is of finite type at the origin. Then the trilinear functional $\La$ satisfies
\begin{equation}\label{Th-main-eq}
|\La(f_1,f_2,f_3)|\le C |\lambda|^{-\frac{1}{2(3+d)}}  \|f_1\|_2\|f_2\|_2\|f_3\|_2.
\end{equation}
Moreover, this decay estimate is also sharp if $\varphi(0,0) \ne 0$.
\end{theorem}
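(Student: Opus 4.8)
The plan is to adapt the scheme by which Xiao proved Theorem~\ref{theorem of Xiao}: decompose a neighbourhood of the origin dyadically, rescale each piece to unit scale, and on each rescaled piece apply the basic $|\lambda|^{-1/6}$ estimate of Li (Theorem~\ref{theorem of Li}); the two new ingredients needed in the merely smooth setting are a resolution of the singular set of the smooth, finite-type function $H=\pa_x\pa_y(\pa_x-\pa_y)S$ (in place of resolution of singularities for real-analytic functions), and a scale-invariant refinement of Theorem~\ref{theorem of Li}. One may assume $S(0,0)=0$ and that $\varphi$ is supported in an arbitrarily small neighbourhood of the origin.

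First I would write $\varphi=\sum_{k\ge 0}\varphi_k$ with $\varphi_k$ supported where $|(x,y)|\sim 2^{-k}$, so $\La=\sum_k\La^{(k)}$, and dispose of the innermost rings $2^{-k}\lesssim|\lambda|^{-1/(3+d)}$ using no oscillation: the change of variables $z=x+y$ together with the Cauchy--Schwarz inequality gives $|\La^{(k)}|\lesssim 2^{-k/2}\|f_1\|_2\|f_2\|_2\|f_3\|_2$, and summing the geometric series over such $k$ produces a bound $\lesssim|\lambda|^{-1/(2(3+d))}\prod_i\|f_i\|_2$.

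For the remaining rings I would rescale by $2^{-k}$. Since $H$ is a third-order derivative of $S$, the rescaled phase $\w S(u,v)=S(2^{-k}u,2^{-k}v)$ satisfies $\pa_u\pa_v(\pa_u-\pa_v)\w S=2^{-k(3+d)}\bigl(H_d(u,v)+O(2^{-k})\bigr)$, where $H_d$ is the nonzero leading homogeneous part of $H$ of degree $d$; thus at unit scale one has a trilinear form whose ``Hessian'' $H$ is comparable to the fixed small quantity $2^{-k(3+d)}$ away from the zero set of $H_d$. That zero set is a finite union of lines through the origin, so it can be resolved by an elementary monomialization, and the perturbation $O(2^{-k})$ does not disturb this resolution once $k$ is large --- this is the smooth substitute for Xiao's use of resolution of singularities. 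On each resolved region where $|H|$ has a good lower bound I would invoke a scale-invariant version of Theorem~\ref{theorem of Li}, of the form $|\La|\lesssim(|\lambda|\,\inf|H|)^{-1/6}\prod_i\|f_i\|_2$ with a constant controlled by finitely many derivatives of the phase (here one also absorbs the modulus-one factor $e^{i\lambda(p(x)+q(y)+r(x+y))}$ into $f_1,f_2,f_3$, which leaves the $L^2$ norms unchanged); the few pieces lying too close to the zero set for $H_d$ to dominate its error are handled by trivial bounds. Undoing the rescaling, the whole ring at scale $2^{-k}$ contributes $\lesssim 2^{kd/6}|\lambda|^{-1/6}\prod_i\|f_i\|_{L^2(|(x,y)|\sim 2^{-k})}$, and summing this over $2^{-k}\gtrsim|\lambda|^{-1/(3+d)}$, with one Cauchy--Schwarz in the ring index to restore $\prod_i\|f_i\|_2$, again gives $|\lambda|^{-1/(2(3+d))}\prod_i\|f_i\|_2$; the two regimes balance precisely at the threshold $2^{-k}\sim|\lambda|^{-1/(3+d)}$.

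The step I expect to be the main obstacle is the organisation of the decomposition near the degeneracy locus of $H$: unlike in the real-analytic case one has no resolution of singularities to quote, so one must build, uniformly in the scale parameter $k$, a monomialization of the smooth finite-type function $H$ that is stable under the lower-order perturbations introduced by rescaling, and then check that every region it produces is controlled either by the scaled basic estimate or by a trivial bound with a matching power of $|\lambda|$. A further technical point is establishing the scale-invariant form of Theorem~\ref{theorem of Li}, i.e.\ tracking the dependence of its constant on the non-degeneracy of the phase. The sharpness claim when $\varphi(0,0)\ne 0$ requires no new work: the monomial-type polynomial phase used by Xiao to show that $|\lambda|^{-1/(2(3+d))}$ is optimal for analytic phases of order $d$ is itself smooth, so testing $\La$ against indicators of appropriately chosen rectangles shows that the exponent in \eqref{Th-main-eq} cannot be improved.
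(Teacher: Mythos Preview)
Your overall scheme --- dyadic scales, trivial bound on the inner core, and a scale-invariant $(|\lambda|\inf|H|)^{-1/6}$ bound (which is exactly Lemma~\ref{oscillatory estimation}, so no new work is needed there) on the outer annuli --- is the paper's scheme as well, and your bookkeeping away from the zeros of $H_d$ is correct. The genuine gap is in the strip around a line $y=\eta x$ on which $H_d$ vanishes. Your proposal says the ``few pieces lying too close to the zero set for $H_d$ to dominate its error are handled by trivial bounds,'' but a direct computation shows this is not enough: if $H_d$ vanishes to order $r$ on $y=\eta x$, then at scale $|x|\sim 2^{-k}$ the strip where the remainder may dominate has width $|y-\eta x|\lesssim 2^{-k(r+1)/r}$; covering it by almost-orthogonal squares of that side length and applying Lemmas~\ref{size estimation} and~\ref{almost orthogonality} gives only $\lesssim 2^{-k(r+1)/(2r)}$ per scale, which sums over $k\ge 0$ to $O(1)$, not to $|\lambda|^{-1/(2(3+d))}$. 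So one must also exploit oscillation inside the strip, and for that one needs lower bounds on $|H|$ there --- but inside the strip $|H|$ is governed by the full Taylor expansion of $H$, not by $H_d$, and for a merely smooth $H$ the zero set of $H$ near the line can be arbitrarily complicated (there is no Newton--Puiseux iteration to fall back on). ``Elementary monomialization'' of the homogeneous polynomial $H_d$ gives no control over this.

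You correctly flag this as the main obstacle, but the fix you sketch is not adequate; what the paper actually does is import Greenblatt's stopping-time resolution of singularities for smooth finite-type functions (\S\ref{Resolution of singularities}): inside each dyadic piece $U_j$ of the strip it produces a family $\cG$ of rectangles on which $|H|$ is comparable to a fixed constant (Lemma~\ref{lem6-1}), with controlled overlap and a Bernstein-type inequality (Lemma~\ref{Bernstein-type inequality}, Theorems~\ref{5-1}--\ref{5-4}); then Lemmas~\ref{size estimation} and~\ref{oscillatory estimation} are balanced rectangle by rectangle, and a rather delicate triple sum (over the size of $|H|$ and the two side-lengths, constrained by Lemma~\ref{lem6-2}) is carried out in \S\ref{completion of the proof} to reach the sharp exponent. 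A further wrinkle you do not mention is that the almost-orthogonality of these rectangles along $y=\eta x$ relies on $\eta\ne 0$; near the $x$-axis the paper first changes variables by $(u,v)=(x,x+y)$, which preserves the trilinear convolution structure, to bring that direction back to a diagonal one. Your remarks on sharpness are fine and agree with the paper's.
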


Our proof of Theorem \ref{Th-main} is quite different from that of Theorem \ref{theorem of Xiao}. The difference lies in the resolution of singularities for $H(x,y)$. In the real-analytic case, resolution of singularities is a consequence of the Newton-Puiseux algorithm. For its applications to related topics, we refer the reader to Phong and Stein \cite{PS1997}, Greenblatt \cite{greenblatt1}, and Xiao \cite{xiao}. This method does not apply to a smooth phase function. Our resolution of singularities in this paper is due to Greenblatt\cite{greenblatt2}.

Away from the coordinate axes, we need resolution of singularities near $H(x,y)=0$ as in \cite{greenblatt2}. By the van der Corput Lemma and an almost orthogonality argument, the sharp $L^2$ decay estimate for one-dimensional oscillatory integral operators near the coordinate axes in \cite{greenblatt2} can be proved without resolution of singularities. However, we find the resolution of singularities for $H$ is also necessary to establish sharp $L^2$ decay estimate for $\La$ near the coordinate axes. In fact, this estimate cannot be proved by the same argument as in \cite{greenblatt2}.

On the other hand, we also consider uniform estimates of $\La$ when the phase function $S(x,y)$ is a polynomial. If some partial derivatives of $H(x,y)= \pa_x \pa_y (\pa_x-\pa_y) S(x,y)$ is bounded away from $0$, then we are able to establish uniform (also sharp) estimates for trilinear oscillatory integrals.
\begin{theorem}\label{Th-main-2}
Assume $S(x,y)\in \bR[x,y]$ is a real polynomial. Let $H(x,y)=\pa_x \pa_y (\pa_x -\pa_y) S(x,y)$. If $H(x,y)$ satisfies
\begin{equation}\label{lower bound of mixed derivatives}
|\pa^{\alpha^{(i)}} H(x,y)| \ge 1,~~ 1\le i \le N,~~ (x,y)\in [0,1]^2,
\end{equation}
where $\alpha^{(1)}, \alpha^{(2)}, \cdots, \alpha^{(N)}$ belong to $\bN^2$, and $\pa^{\alpha^{(i)}} H(x,y)= \pa^{\alpha_1^{(i)}}_x \pa^{\alpha_2^{(i)}}_y H(x,y)$ with $\alpha^{(i)}=\l(\alpha_1^{(i)}, \alpha_2^{(i)} \r)$, then for each cut-off function $\varphi \in C_0^\infty(\bR^2)$ with $supp(\varphi)\subseteq [0,1]^2$, there exists a constant $C$, depending only on $\deg{S}$ and $\varphi$, such that
$$ |\La(f_1,f_2,f_3)| \le C |\lambda|^{-\frac{1}{2(3+d)}} \|f_1\|_2 \|f_2\|_2 \|f_3\|_2,$$
where $d=\min\{|\alpha^{(i)}|: 1\le i \le N\}$.
\end{theorem}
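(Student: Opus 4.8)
The plan is to reduce Theorem \ref{Th-main-2} to the already-established smooth case, Theorem \ref{Th-main}, by a compactness-plus-rescaling argument, the only new content being the \emph{uniformity} of the constant $C$ in terms of $\deg S$ alone. First I would observe that the hypothesis (\ref{lower bound of mixed derivatives}) forces $H$ to be of finite type at \emph{every} point of $[0,1]^2$: at a point $p$ where some $\pa^{\alpha^{(i)}}H$ does not vanish, the order of $H$ at $p$ is at most $|\alpha^{(i)}|$, hence at most $d=\min_i|\alpha^{(i)}|$. So near each $p\in[0,1]^2$ a local version of Theorem \ref{Th-main} applies with decay exponent at least $\frac{1}{2(3+d)}$; the issue is that the implied constant a priori depends on $p$ and on $S$, not just on $\deg S$.

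To get uniformity I would argue by contradiction and compactness in the space of polynomials of bounded degree. Suppose the conclusion fails: there is a sequence of polynomials $S_k$ with $\deg S_k\le D$, cut-offs $\varphi_k$ (which, after a further normalization, we may take from a fixed bounded family — or simply fix $\varphi$ by a partition of unity argument and absorb $\|\varphi\|$ into $C$), parameters $\lambda_k$, and functions $f_{1,k},f_{2,k},f_{3,k}$ of unit $L^2$ norm, such that $|\Lambda_{\lambda_k}(f_{1,k},f_{2,k},f_{3,k})|\,|\lambda_k|^{\frac{1}{2(3+d)}}\to\infty$. Normalizing the coefficients of $H_k=\pa_x\pa_y(\pa_x-\pa_y)S_k$ so that, say, $\max_{|\alpha|\le D}\|\pa^\alpha H_k\|_{L^\infty([0,1]^2)}=1$ — legitimate since $\Lambda_\lambda$ scales linearly in the phase via $\lambda$ — we may pass to a subsequence along which the (finitely many) coefficients of $S_k$ converge, so $S_k\to S_\infty$ in $C^\infty([0,1]^2)$, with $H_\infty=\pa_x\pa_y(\pa_x-\pa_y)S_\infty$ still satisfying (\ref{lower bound of mixed derivatives}) and hence still of finite type of order $\le d$ at every point of $[0,1]^2$. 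Then $|\lambda_k|\to\infty$ (otherwise the left side is bounded), and a partition of unity on $[0,1]^2$ subordinate to small neighborhoods on each of which the single-point estimate of Theorem \ref{Th-main} holds for $S_\infty$ — together with the stability of that local estimate under small $C^\infty$ perturbations of the phase, which is exactly what the resolution-of-singularities construction of Greenblatt \cite{greenblatt2} provides since it depends only on finitely many derivatives — yields a uniform bound $|\Lambda_{\lambda_k}(f_{1,k},f_{2,k},f_{3,k})|\le C'|\lambda_k|^{-\frac{1}{2(3+d)}}$ for large $k$, contradicting the assumed blow-up.

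The main obstacle is making the phrase ``the local estimate of Theorem \ref{Th-main} is stable under small $C^\infty$ perturbations of the phase, with constant uniform over the perturbation'' precise and correct. The decay in Theorem \ref{Th-main} comes from resolving the singularities of $\{H=0\}$ and summing geometric-type contributions dyadically; to port this to a family $S_k\to S_\infty$ one must check that the resolution data (the relevant sub-blow-up charts, the exponents appearing, the number of pieces) can be chosen once and for all from $H_\infty$ and remain valid for all $H_k$ with $k$ large, because $H_k$ agrees with $H_\infty$ to high order with small error. Away from the axes this is Greenblatt's resolution \cite{greenblatt2}, which is manifestly finite-order and hence stable; near the coordinate axes one uses the van der Corput plus almost-orthogonality argument, whose constants depend only on lower bounds for finitely many derivatives of $H$, which are provided \emph{uniformly} by (\ref{lower bound of mixed derivatives}). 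I would also need to note that the finitely many $\pa^{\alpha^{(i)}}$ appearing in (\ref{lower bound of mixed derivatives}) give, at each point, a \emph{definite} nonvanishing derivative of order $\le d$, so the local order of $H$ is $\le d$ everywhere with no further hypotheses, which is what pins the exponent at $\frac{1}{2(3+d)}$ uniformly. Once the stable local estimate is in hand, the contradiction-compactness scheme closes the argument, and the constant $C$ depends only on $D=\deg S$, on $d$, and on the fixed cut-off $\varphi$, as claimed. $\Box$
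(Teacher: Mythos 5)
Your approach — compactness in the coefficient space of polynomials of bounded degree plus a contradiction argument — is genuinely different from the paper's, which is a direct, fully quantitative proof: the paper dyadically decomposes according to the size of $|H(x,y)|$ into pieces $\Lambda_\lambda^{(j)}$ supported on the semi-algebraic sets $\{|H|\sim 2^j\}$, applies the Phong--Stein--Sturm decomposition of algebraic domains into curved trapezoids (Lemma \ref{decomposition of algebraic domains}), bounds each piece from above by a uniform sublevel-set estimate (Theorem \ref{Th-main-2} for $K_\mu$, proved via an almost-orthogonality decomposition reducing to the model case $H=x^a y^b$) and from below by an operator van der Corput lemma (Lemma \ref{van der Corput for trilinear}), then sums. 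Every constant is tracked and depends only on $\deg S$ via B\'ezout-type counting of the algebraic pieces; no resolution of singularities and no smooth-phase theorem is invoked.

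There is a genuine gap in your proposal. First, the normalization step is inconsistent: the constraint $|\pa^{\alpha^{(i)}}H_k|\ge 1$ forces $\max_{|\alpha|\le D}\|\pa^\alpha H_k\|_{L^\infty}\ge 1$, so normalizing this maximum to $1$ divides $H_k$ by a factor $\ge 1$, which may destroy the very lower bound (\ref{lower bound of mixed derivatives}) you need in the limit. Second, and more seriously, the claim that Greenblatt's resolution-of-singularities construction is ``manifestly finite-order and hence stable'' is not correct: the construction is an infinite stopping-time iteration whose output (the family $\mathcal{F}_\infty$, the shapes and number of rectangles in $\mathcal{G}$, the exponent $\sigma$ in Theorem \ref{5-1}, and the overlap constants in Theorems \ref{5-3}--\ref{5-4}) depends on the entire behavior of $H$ near each curve $\{p_i(c)=0\}$ and can change discontinuously under perturbation; in particular the Newton polyhedron of $H_\infty$ can strictly contain those of the $H_k$ (extra vanishing can appear in the limit), so the data ``chosen once and for all from $H_\infty$'' need not be valid for $H_k$ with any uniformity. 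Establishing the needed stability with constants uniform over the family is essentially equivalent to proving the uniform estimate directly, which is what the paper does by a different, semi-algebraic route that exploits the polynomial hypothesis quantitatively rather than as a mere source of compactness.
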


Uniform estimates under assumptions of form (\ref{lower bound of mixed derivatives}) appeared in Carbery, Christ, and Wright\cite{carbery}, Phong, Stein, and Sturm\cite{PSS2001}, Carbery, and Wright\cite{CaW}, and Gressman \cite{gressman}. Our proof of Theorem \ref{Th-main-2} is inspired by the work of Phong, Stein, and Sturm\cite{PSS2001}.  For related works on this topic in this article, we refer the reader to
\cite{Christ2}, \cite{Christ3}, \cite{Christ4}, \cite{gressmanxiao}.
\section{Sharp estimates with smooth phases}
\subsection{Some lemmas}

In this section, we shall present some basic lemmas which will be frequently used in our analysis. Almost all of these results have appeared in literature cited below.

We begin with a simple size estimate for trilinear integrals of convolution type; see also \cite{xiao}.

\begin{lem} \label{size estimation}
Assume $R$ is a rectangle in $\bR^2$ with sides parallel to the axes, and it is of dimensions $\delta_1 \times \delta_2$. If $a \in L^\infty(\bR^2)$ is supported on $R$, then we have
$$ \l|\int_{\bR^2} f_1(x) f_2(y) f_3(x+y) a(x,y) \dif x \dif y \r| \le \min\{\delta_1^{\frac12},\delta_2^{\frac12}\} \|a\|_\infty \|f_1\|_2  \|f_2\|_2\|f_3\|_2.$$
\end{lem}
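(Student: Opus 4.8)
The plan is to reduce the trilinear integral to a bilinear form controlled by an $L^2 \to L^2$ operator norm, then estimate that norm by Cauchy–Schwarz together with the bounded support of $a$. First I would fix the convolution constraint by a change of variables: writing the integral as $\int_{\bR^2} f_1(x) f_2(y) f_3(x+y) a(x,y)\dif x\dif y$, I would treat it as a pairing $\langle T(f_2\otimes f_3), \bar f_1\rangle$ — or, more directly, think of it as $\int_\bR f_1(x)\,g(x)\dif x$ where $g(x) = \int_\bR f_2(y) f_3(x+y) a(x,y)\dif y$. By Cauchy–Schwarz in $x$ it suffices to bound $\|g\|_2 \le \min\{\delta_1^{1/2},\delta_2^{1/2}\}\|a\|_\infty \|f_2\|_2\|f_3\|_2$.

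For the factor $\delta_2^{1/2}$: in the inner integral defining $g(x)$, the variable $y$ ranges over an interval of length $\delta_2$ (the $y$-extent of $R$), so by Cauchy–Schwarz in $y$,
\begin{equation*}
|g(x)|^2 \le \delta_2 \|a\|_\infty^2 \int_\bR |f_2(y)|^2 |f_3(x+y)|^2 \dif y.
\end{equation*}
Integrating in $x$ and applying Fubini (substituting $u = x+y$ in the $x$-integral for fixed $y$) gives $\|g\|_2^2 \le \delta_2 \|a\|_\infty^2 \|f_2\|_2^2 \|f_3\|_2^2$, which is the desired bound with $\delta_2^{1/2}$.

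For the factor $\delta_1^{1/2}$: here I would instead substitute $z = x+y$ so that the integral becomes $\int f_1(x) f_2(z-x) f_3(z) \tilde a(x,z)\dif x\dif z$ with $\tilde a(x,z) = a(x, z-x)$ still bounded by $\|a\|_\infty$ and now supported where $x$ ranges over an interval of length $\delta_1$; repeating the argument above (Cauchy–Schwarz in $x$, then Fubini) yields the bound with $\delta_1^{1/2}$. Taking the minimum of the two completes the proof. There is no real obstacle here — the only point requiring a little care is making sure the roles of the three functions are permuted correctly so that the support interval being exploited is genuinely the short side of $R$ in each case; this is purely bookkeeping via the two linear changes of variables.
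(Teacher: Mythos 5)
Your proof is correct, and it uses the natural double--Cauchy--Schwarz argument; the paper itself does not supply a proof (it simply points the reader to Xiao's paper), but this is exactly the standard argument there. One small remark: the change of variables $z=x+y$ in the $\delta_1$ case is a detour you can skip --- just pair the trilinear form with $f_2$ instead of $f_1$, apply Cauchy--Schwarz in $y$, and then in the inner $x$-integral $\int f_1(x) f_3(x+y) a(x,y)\dif x$ note that $a(\cdot,y)$ is supported in an interval of length at most $\delta_1$; Cauchy--Schwarz in $x$ and Fubini then give $\delta_1^{1/2}\|a\|_\infty\|f_1\|_2\|f_2\|_2\|f_3\|_2$ directly, mirroring your first case with roles permuted.
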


\noindent The following lemma is an almost orthogonality principle for trilinear integrals. This lemma has been used in \cite{xiao}.
\begin{lem}\label{almost orthogonality}
Suppose $T=\sum_l T_l$ is a trilinear functional, such that each $T_l$ can be written as
$$T_l (f_1,f_2,f_3)=\int \int K_l(x,y) f_1(x) f_2(y) f_3(x+y) \dif x\dif y,$$
where $T_l(x,y)$ is supported on a product of intervals $I_l \times J_l$.
Suppose there is a positive integer $L$ such that for a fixed $l$,
the number of $m$ with $I_l \cap I_m \ne \varnothing$ is bounded by $L$,
and the number of $m$ with $J_l \cap J_m \ne \varnothing$ is also bounded by $L$.
Then $\|T\|\le L \sup_l \|T_l\|$.
\end{lem}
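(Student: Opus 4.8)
The statement to prove is the almost‑orthogonality principle: if $T=\sum_l T_l$ with each kernel $K_l$ supported on $I_l\times J_l$, and each interval meets at most $L$ others in each coordinate, then $\|T\|\le L\sup_l\|T_l\|$.

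Let me think about how I would attack this. The natural idea is to group the summands so that within each group the kernels have pairwise disjoint supports in, say, the $x$‑variable, and then handle the overlaps by a coloring/chromatic argument. Here is the approach I would take.

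First, the coloring step. Consider the graph on the index set whose vertices are the indices $l$, with an edge between $l$ and $m$ whenever $I_l\cap I_m\ne\varnothing$. By hypothesis this graph has maximum degree at most $L-1$ (each $l$ meets at most $L$ indices including itself, hence at most $L-1$ others), so by the greedy coloring bound it is $L$‑colorable. Write $T=\sum_{k=1}^{L}\sum_{l\in C_k}T_l=\sum_{k=1}^L U_k$, where within each color class $C_k$ the intervals $\{I_l\}_{l\in C_k}$ are pairwise disjoint. By the triangle inequality $\|T\|\le\sum_{k=1}^L\|U_k\|$, so it suffices to prove $\|U_k\|\le\sup_l\|T_l\|$ for each $k$; summing over the $L$ colors then gives the claimed bound.

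Second, the single‑color estimate. Fix a color class $C$ with $\{I_l\}_{l\in C}$ pairwise disjoint, and let $U=\sum_{l\in C}T_l$. I want to bound $|U(f_1,f_2,f_3)|$. Writing the integral out and using that the $I_l$ are disjoint in the $x$‑variable, I would split $f_1=\sum_{l\in C}f_1\mathbf{1}_{I_l}$ (the pieces are orthogonal, so $\sum_l\|f_1\mathbf{1}_{I_l}\|_2^2\le\|f_1\|_2^2$). Then
\[
|U(f_1,f_2,f_3)|=\Bigl|\sum_{l\in C}T_l(f_1\mathbf{1}_{I_l},f_2,f_3)\Bigr|
\le\sum_{l\in C}\|T_l\|\,\|f_1\mathbf{1}_{I_l}\|_2\,\|f_2\|_2\,\|f_3\|_2 .
\]
At this point the $\|f_2\|_2\|f_3\|_2$ factors are being wasted $|C|$ times, which is too lossy, so the crucial extra ingredient is that the kernels are \emph{also} localized in $y$: in $T_l(f_1\mathbf{1}_{I_l},f_2,f_3)$ one may replace $f_2$ by $f_2\mathbf{1}_{J_l}$ and $f_3$ by $f_3\mathbf{1}_{I_l+J_l}$ without changing the value. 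One then applies Cauchy--Schwarz over $l\in C$ to the sum $\sum_l\|T_l\|\,\|f_1\mathbf{1}_{I_l}\|_2\,\|f_2\mathbf{1}_{J_l}\|_2\,\|f_3\mathbf{1}_{I_l+J_l}\|_2$, bounding it by $\sup_l\|T_l\|$ times $\bigl(\sum_l\|f_1\mathbf{1}_{I_l}\|_2^2\bigr)^{1/2}$ times a factor controlling $\sum_l\|f_2\mathbf{1}_{J_l}\|_2^2\cdot(\text{something})$; here the bounded‑overlap hypothesis on the $J_l$'s is what keeps $\sum_l\|f_2\mathbf{1}_{J_l}\|_2^2\lesssim\|f_2\|_2^2$, and similarly for the $f_3$ factor after noting the overlap structure of the sumsets $I_l+J_l$ is controlled by that of the $I_l$ and $J_l$.

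The main obstacle, and the step requiring care, is making the Cauchy--Schwarz bookkeeping in the single‑color estimate actually produce the constant $1$ (not a further factor of $L$): one has three $\ell^2$‑type sums of restricted pieces to balance against only the two orthogonality relations coming from the disjointness of $I_l$ within a color and the $L$‑bounded overlap of the $J_l$. The right way, I expect, is to perform the color decomposition simultaneously in both coordinates — i.e.\ use the $L$ colors to make $\{I_l\}$ disjoint, observe that within a color the $\{J_l\}$ still have $L$‑bounded overlap, and then absorb that residual overlap by a second inequality rather than a second coloring — so that altogether one color decomposition plus a clean Cauchy--Schwarz yields $\|T\|\le L\sup_l\|T_l\|$. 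One should also double‑check the degenerate possibility that a kernel's support $I_l\times J_l$ is empty or that intervals are half‑open versus closed, which only affects whether ``meets'' is the right relation, not the argument.
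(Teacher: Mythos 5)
Your plan introduces a chromatic/coloring step that the paper's argument does not use, and in fact that detour is where the constant goes wrong. If you partition the indices into $L$ color classes so that within a class the $I_l$ are pairwise disjoint, then within a class the $J_l$ still have $L$-bounded overlap; the best single-color bound obtainable from Cauchy--Schwarz (localizing $f_1$ and $f_2$ and leaving $f_3$ global) is $\sqrt{L}\,\sup_l\|T_l\|$, and summing over the $L$ colors produces $L^{3/2}$ rather than $L$. Your attempt to repair this by also localizing $f_3$ to $I_l+J_l$ does not help: disjointness of $I_l$ does not give any useful overlap control on the sumsets $I_l+J_l$ (e.g. all $I_l+J_l$ can coincide when $J_l$ is large), so the ``third'' $\ell^2$ sum is not bounded. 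You correctly flagged this as the step requiring care, but as written the gap is real.

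The paper avoids the issue entirely by never decomposing $T$ into color classes. It bounds $|T(f_1,f_2,f_3)|\le\sum_l|T_l(f_1,f_2,f_3)|$ directly, uses that $K_l$ is supported in $I_l\times J_l$ to write
\[
|T_l(f_1,f_2,f_3)|\le\|T_l\|\,\|\chi_{I_l}f_1\|_2\,\|\chi_{J_l}f_2\|_2\,\|f_3\|_2
\]
(crucially, $f_3$ is \emph{not} localized; $\|f_3\|_2$ is pulled out of the sum), then applies Cauchy--Schwarz over the index $l$:
\[
\sum_l\|\chi_{I_l}f_1\|_2\,\|\chi_{J_l}f_2\|_2
\le\Bigl(\sum_l\|\chi_{I_l}f_1\|_2^2\Bigr)^{1/2}\Bigl(\sum_l\|\chi_{J_l}f_2\|_2^2\Bigr)^{1/2}\le L\,\|f_1\|_2\|f_2\|_2,
\]
where the last step uses that the overlap hypothesis forces $\sum_l\chi_{I_l}(x)\le L$ pointwise (and likewise for the $J_l$). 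This is the observation you were missing: you only ever need two $\ell^2$ sums, and the overlap hypothesis is applied directly to them; no graph coloring, no disjointness, and no bound on the sumsets is required. With that simplification your ``single-color'' anxiety disappears and the stated constant $L=\sqrt{L}\cdot\sqrt{L}$ falls out immediately.
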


\begin{proof}
By the assumption $T=\sum_lT_l$, we see that $|T(f_1,f_2,f_3)|$ is bounded by
\begin{eqnarray*}
 \sum_l \int \int \l|K_l(x,y) f_1(x) f_2(y) f_3(x+y)\r| \dif x\dif y
&\le & \sum_l \|T_l\| \|\chi_{I_l} f_1\|_2 \|\chi_{J_l} f_2\|_2 \|f_3\|_2 \\
&\le & \sup_l \|T_l\| \|f_3\|_2 \sum_l \|\chi_{I_l} f_1\|_2 \|\chi_{J_l} f_2\|_2.
\end{eqnarray*}
By Cauchy-Schwarz's inequality, it follows that
 $$\sum_l \|\chi_{I_l} f_1\|_2 \|\chi_{J_l} f_2\|_2
 \leq  \l( \sum_l \|\chi_{I_l} f_1\|_2^2 \r)^{1/2}
 \l( \sum_l \|\chi_{J_l} f_2\|_2^2  \r)^{1/2}
  \leq L \|f_1\|_2\|f_2\|_2.$$
This implies $\|T\|\le L \sup_l \|T_l\|.$
\end{proof}

\noindent The following lemma is contained in Christ \cite{Christ}.
\begin{lem}\label{sup estimate}
Assume $f$ is of class $C^n$ on some interval $I$ satisfying $|f^{(n)}(x)|\ge 1$ for all $x\in I$. Then there exists a constant $C=C(n)$ such that
$$ \sup_{x\in I} |f(x)| \ge C_n |I|^n .$$
\end{lem}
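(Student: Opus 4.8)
The plan is to prove this classical estimate by a finite-difference (equivalently, iterated Rolle) argument, which moreover yields an explicit admissible constant of the form $C_n=(2n)^{-n}$. First I would reduce to a closed bounded interval: after a translation we may take the left endpoint of $I$ to be $0$ and set $L=|I|$; since $\sup_I|f|$ and the hypothesis $|f^{(n)}|\ge 1$ are unaffected by passing to the closure, we may assume $I=[0,L]$. Write $M=\sup_{x\in I}|f(x)|$; the goal becomes $M\ge (2n)^{-n}L^n$.

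Next I would introduce the $n+1$ equally spaced nodes $x_k=kh$, $k=0,1,\dots,n$, with $h=L/n$, so that every $x_k\in[0,L]$, and let $p$ be the unique polynomial of degree $\le n$ interpolating $f$ at these nodes. The Newton form of $p$ shows that its leading coefficient is the divided difference $f[x_0,\dots,x_n]$, and the explicit Lagrange expansion at equally spaced nodes gives
\[
n!\,h^n\,f[x_0,\dots,x_n]=\sum_{k=0}^n(-1)^{n-k}\binom{n}{k}f(x_k).
\]
On the other hand, $g:=f-p$ vanishes at the $n+1$ nodes, so $n$ successive applications of Rolle's theorem produce a point $\eta\in(0,L)$ with $g^{(n)}(\eta)=0$; since $\deg p\le n$, this reads $f^{(n)}(\eta)=p^{(n)}(\eta)=n!\cdot(\text{leading coefficient of }p)=n!\,f[x_0,\dots,x_n]$.

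Combining the two facts, $\bigl|\sum_{k=0}^n(-1)^{n-k}\binom{n}{k}f(x_k)\bigr|=h^n\,|f^{(n)}(\eta)|\ge h^n$ by hypothesis, while the left-hand side is at most $\bigl(\sum_{k=0}^n\binom{n}{k}\bigr)M=2^nM$ because each $x_k\in I$. Hence $M\ge h^n/2^n=L^n/(2n)^n$, which is the assertion with $C_n=(2n)^{-n}$. As for difficulty, this lemma is elementary and there is no serious obstacle; the one point requiring care is the mean-value identity $f^{(n)}(\eta)=n!\,f[x_0,\dots,x_n]$, which is precisely the iterated-Rolle step above. I would emphasize that, in contrast to a monotonicity/mean-value argument in the direction of the endpoints, the finite-difference approach needs no hypothesis that $f^{(n)}$ have constant sign, so it avoids any case analysis; a minor bookkeeping point is only to ensure all interpolation nodes lie in $I$ and, for a non-closed interval, to shrink $I$ slightly and let the shrinkage tend to zero.
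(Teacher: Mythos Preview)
Your proof is correct. Note, however, that the paper does not actually supply its own proof of this lemma: it simply records that the result is contained in Christ~\cite{Christ}. So there is no in-paper argument to compare against.

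For context, the argument one finds in Christ (and in most standard references) proceeds by induction on $n$: for $n=1$ the hypothesis forces $f'$ to have constant sign, so $|f(b)-f(a)|\ge|I|$ and hence $\sup_I|f|\ge|I|/2$; for the inductive step one applies the $(n-1)$-case to $f'$ and then integrates over a suitable half of $I$. Your finite-difference/interpolation route is a clean alternative that bypasses the induction entirely and yields the explicit constant $C_n=(2n)^{-n}$ in one stroke. One minor remark: your closing comment that the method ``needs no hypothesis that $f^{(n)}$ have constant sign'' is slightly beside the point here, since continuity of $f^{(n)}$ together with $|f^{(n)}|\ge 1$ already forces constant sign by the intermediate value theorem; this does not affect the validity of your argument, only the advertised advantage.
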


\noindent The following Bernstein lemma is due to Greenblatt\cite{greenblatt2}.

\begin{lem}\label{Bernstein-type inequality}
{\rm (\cite{greenblatt2}~)}
Let $f(x,y)$ be a smooth function in a neighborhood of the origin. Assume $(k,l)$ is a multiindex such that
\begin{equation}\label{Bernstein-type inequality-eq-1}
\dfrac{\pa^{k+l}f}{\pa x^k \pa y^l}(0,0) \ne 0.
\end{equation}
Suppose that $\delta,N >0 $. There is a neighborhood $U$ of the origin such that if $R \subset U$ is an $r_1$ by $r_2$ rectangle with $r_1 \le N r_2^\delta, r_2 \le N r_1^\delta$,
then for any multiindex $(\alpha,\beta)$ satisfying $0 \le \alpha,\beta \le 2$ there exists a constant $C$ such that
\begin{equation}\label{Bernstein-type inequality-eq-2}
\sup\limits_{(x,y)\in R}\l|\dfrac{\pa^{\alpha+\beta}f}{\pa x^\alpha \pa y^\beta}(x,y)\r| < Cr_1^{-\alpha} r_2^{-\beta} \sup\limits_{(x,y)\in R}|f(x,y)|.
\end{equation}
Furthermore, if $R\p$ is a subrectangle of $R$ of dimensions $\frac{r_1}{2}$ by $\frac{r_2}{2}$, then we have
\begin{equation}\label{Bernstein-type inequality-eq-3}
\sup\limits_{(x,y)\in R}|f(x,y)|<C \sup\limits_{(x,y)\in R\p}|f(x,y)|.
\end{equation}
\end{lem}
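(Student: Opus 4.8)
The plan is to reduce the estimates (\ref{Bernstein-type inequality-eq-2}) and (\ref{Bernstein-type inequality-eq-3}) to classical polynomial inequalities on the unit square by an affine rescaling, approximating $f$ by its Taylor polynomial of a high, fixed order centered at a \emph{corner} of the rectangle rather than at the origin. First I would fix an integer $M$ so large that $M\ge k+l$ and $\delta(M+1)>k+l$; this $M$ depends only on $k,l,\delta$. Given an admissible rectangle $R=[a_1,a_1+r_1]\times[a_2,a_2+r_2]\subset U$, with $U$ a small ball around the origin to be specified (depending only on $f,k,l,\delta,N$), I write $f=P+E$ on $R$, where $P$ is the degree-$M$ Taylor polynomial of $f$ at the corner $a=(a_1,a_2)$. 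The standard remainder estimate then gives, on $R$, the bound $|\partial_x^\alpha\partial_y^\beta E(x,y)|\le A_M\,(\max\{r_1,r_2\})^{M+1-\alpha-\beta}$ for $0\le\alpha+\beta\le M+1$, where $A_M$ depends only on $M$ and on $\sup_U|\partial^\gamma f|$ over $|\gamma|\le M+1$.

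The key step is a uniform lower bound $\|f\|_{L^\infty(R)}\ge c\,r_1^k r_2^l$, valid for all admissible $R$ with $c>0$ independent of $R$. Substituting $x=a_1+r_1u$, $y=a_2+r_2v$ turns $P$ into a polynomial $\tilde P$ in $(u,v)$ of degree $\le M$ whose coefficient of $u^k v^l$ equals $\tfrac{1}{k!\,l!}\,\partial_x^k\partial_y^l f(a)\,r_1^k r_2^l$; by continuity of $\partial_x^k\partial_y^l f$ and since $R\subset U$ lies near the origin, $|\partial_x^k\partial_y^l f(a)|\ge\tfrac12|\partial_x^k\partial_y^l f(0,0)|>0$. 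As all norms on the finite-dimensional space of polynomials of degree $\le M$ are comparable (constant $C_M$), it follows that $\|P\|_{L^\infty(R)}=\|\tilde P\|_{L^\infty([0,1]^2)}\ge C_M^{-1}\tfrac{|\partial_x^k\partial_y^l f(0,0)|}{2\,k!\,l!}\,r_1^k r_2^l$. On the other hand the admissibility conditions $r_1\le Nr_2^\delta$, $r_2\le Nr_1^\delta$ give $\max\{r_1,r_2\}\le N(\min\{r_1,r_2\})^\delta$, hence $(\max\{r_1,r_2\})^{M+1}\le N^{M+1}(\min\{r_1,r_2\})^{\delta(M+1)}\le N^{M+1}\,(\mathrm{diam}(U))^{\delta(M+1)-(k+l)}\,r_1^k r_2^l$; shrinking $U$ makes this at most half of the lower bound for $\|P\|_{L^\infty(R)}$. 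Therefore $\|E\|_{L^\infty(R)}\le\tfrac12\|P\|_{L^\infty(R)}$, so $\|f\|_{L^\infty(R)}\ge\tfrac12\|P\|_{L^\infty(R)}\ge c\,r_1^k r_2^l$, and in particular $\|E\|_{L^\infty(R)}\le\|f\|_{L^\infty(R)}$ and $r_1^\alpha r_2^\beta\|\partial_x^\alpha\partial_y^\beta E\|_{L^\infty(R)}\le A_M(\max\{r_1,r_2\})^{M+1}\le C\,\|f\|_{L^\infty(R)}$ for $0\le\alpha+\beta\le M+1$.

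With this in hand, (\ref{Bernstein-type inequality-eq-2}) follows quickly: Markov's inequality applied in each variable to $\tilde P$ on $[0,1]^2$ gives $\|\partial_u^\alpha\partial_v^\beta\tilde P\|_{L^\infty([0,1]^2)}\le(2M^2)^{\alpha+\beta}\|\tilde P\|_{L^\infty([0,1]^2)}$, i.e. $\|\partial_x^\alpha\partial_y^\beta P\|_{L^\infty(R)}\le(2M^2)^{\alpha+\beta}r_1^{-\alpha}r_2^{-\beta}\|P\|_{L^\infty(R)}$ after undoing the rescaling; combining this with $\|P\|_{L^\infty(R)}\le\|f\|_{L^\infty(R)}+\|E\|_{L^\infty(R)}\le 2\|f\|_{L^\infty(R)}$ and with the bound on $\|\partial_x^\alpha\partial_y^\beta E\|_{L^\infty(R)}$ yields $\|\partial_x^\alpha\partial_y^\beta f\|_{L^\infty(R)}\le C r_1^{-\alpha}r_2^{-\beta}\|f\|_{L^\infty(R)}$ for $0\le\alpha,\beta\le2$. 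For (\ref{Bernstein-type inequality-eq-3}) I argue the same way, replacing Markov's inequality by the one-dimensional Remez-type inequality iterated in the two variables: a polynomial of degree $\le M$ satisfies $\|\tilde P\|_{L^\infty([0,1]^2)}\le C_M\|\tilde P\|_{L^\infty(Q)}$ for any axis-parallel sub-square $Q$ of side $\tfrac12$, and the subrectangle $R'$ corresponds under the rescaling to such a $Q$; since $R'$ is itself admissible (with $N$ replaced by $2^{\delta}N$), the lower bound applies to $R'$ and dominates $\|E\|_{L^\infty(R)}$, so $\|f\|_{L^\infty(R)}\le\|P\|_{L^\infty(R)}+\|E\|_{L^\infty(R)}\le C_M\|P\|_{L^\infty(R')}+\|E\|_{L^\infty(R)}\le C\,\|f\|_{L^\infty(R')}$.

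I expect the only genuinely delicate point to be the uniform lower bound $\|f\|_{L^\infty(R)}\gtrsim r_1^k r_2^l$ — more precisely, verifying that a \emph{single} pair $(M,U)$ makes the Taylor remainder negligible on \emph{every} admissible rectangle at once. This is exactly where both hypotheses enter: the finite-type assumption $\partial_x^k\partial_y^l f(0,0)\ne0$ furnishes, through the norm equivalence, a lower bound of the precise size $r_1^k r_2^l$, while the scale constraints $r_1\le Nr_2^\delta$, $r_2\le Nr_1^\delta$ force the remainder, whose natural size is $(\max\{r_1,r_2\})^{M+1}$, to be smaller than $r_1^k r_2^l$ once $M$ is large compared with $(k+l)/\delta$ and $U$ is small. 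Expanding at a corner of $R$ — so that the remainder is governed by the \emph{side lengths} of $R$ rather than by the distance from $R$ to the origin — is the small but essential observation that keeps this estimate uniform; without it, rectangles lying deep inside $U$ yet far from the origin relative to their own size would escape control.
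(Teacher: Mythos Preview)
The paper does not supply a proof of this lemma; it is quoted verbatim from Greenblatt \cite{greenblatt2} and used as a black box, so there is no ``paper's own proof'' to compare against.

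That said, your argument is correct and is essentially the natural one. The three ingredients --- (a) Taylor-expand $f$ to a fixed high order $M$ at a corner of $R$, (b) rescale $R$ to the unit square so that the problem becomes one about polynomials of bounded degree, where Markov/Remez inequalities and norm equivalence apply, and (c) use the admissibility constraint $r_1\le Nr_2^\delta$, $r_2\le Nr_1^\delta$ together with $\delta(M+1)>k+l$ to absorb the Taylor remainder into the polynomial lower bound $\|P\|_{L^\infty(R)}\gtrsim r_1^kr_2^l$ --- fit together exactly as you describe. Your identification of the uniform lower bound $\|f\|_{L^\infty(R)}\gtrsim r_1^kr_2^l$ as the crux, and of the corner-centered expansion as the device that makes it uniform over all admissible $R$, is on the mark. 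The only cosmetic point is that in the admissibility check for $R'$ the correct replacement constant is $N'=2^{\delta-1}N$ (or simply $N$ when $\delta\le1$) rather than $2^\delta N$, but this is harmless since any fixed $N'$ depending only on $N,\delta$ suffices and $U$ can be chosen once for that $N'$.
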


To give a uniform $L^2$ decay estimate for trilinear oscillatory integrals of convolution type, we need an operator van der Corput lemma. For its proof, see also Lemma \ref{van der Corput for trilinear} in \S3.
\begin{lem} \label{oscillatory estimation}
{\rm(\cite{xiao}~)}
Let $\La$ be defined as in {\rm(\ref{def})}. Assume the cut-off function $\varphi$ is supported in a rectangle $R$ of dimensions $\delta_1 \times \delta_2$ with $\delta_1>0, \delta_2>0$, and there exists a constant $A>0$ such that
$$ \delta_2 \sup_R |\pa_y a(x,y)|+ \delta_2^2 \sup_R |\pa_y^2 a(x,y)| \le A.$$
Let $S(x,y)$ be a real-valued smooth function, and let $H(x,y)=\pa_x \pa_y (\pa_x -\pa_y) S(x,y)$. If there exist two constants $\mu,B>0$ such that
$$ |H(x,y)| \ge \mu, \quad \delta_2 \sup_R |\pa_y H(x,y)|+ \delta_2^2 \sup_R |\pa_y^2 H(x,y)| \le B \mu ,$$
for all $(x,y)\in R$, then we have
$$ |\La(f_1,f_2,f_3)| \le C |\lambda \mu |^{-\frac16} \|f_1\|_2 \|f_2\|_2 \|f_3\|_2, $$
where the constant $C$ depends only on $A$ and $B$.

\end{lem}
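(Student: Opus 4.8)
The plan is to reduce the trilinear estimate to a one-dimensional oscillatory-integral-operator bound and then apply the classical operator van der Corput lemma. First I would make a change of variables that respects the convolution structure: set $u = x$, $v = x+y$ (so $y = v-u$), which turns $\La$ into
\[
\La(f_1,f_2,f_3) = \iint e^{i\lambda S(u,v-u)} f_1(u)\,\widetilde f_2(v-u)\,f_3(v)\,\widetilde\varphi(u,v)\dif u\dif v,
\]
and then integrate in $u$ first, viewing the inner integral as the kernel of an operator acting in the $v$ variable against the function $w\mapsto f_1(w)\widetilde f_2(v-w)$... Actually the cleaner route, and the one I expect to carry out, is to freeze one of the three functions and pair the remaining bilinear oscillatory form: write $\La = \int f_3(x+y)\,[\,\cdots\,]$ is awkward because $f_3$ sits at $x+y$; instead test the quadratic form $\La$ against itself. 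So the real first step is to compute $\|\La\|$ via a $TT^*$-type duality: bound $|\La(f_1,f_2,f_3)|$ by freezing $f_1$, treating $(f_2,f_3)\mapsto \int e^{i\lambda S}f_1(x)f_2(y)f_3(x+y)\varphi\,\dif x\dif y$ as a bilinear form, and expanding the square. After the change of variables $y\mapsto x+y$ this becomes a bilinear oscillatory integral whose phase's mixed derivative is, up to the change of variables, controlled by $H$.

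The heart of the matter: reduce to a one-variable oscillatory integral operator $T g(x) = \int e^{i\lambda\phi(x,y)}\psi(x,y)g(y)\dif y$ whose mixed Hessian $\pa_x\pa_y\phi$ is comparable to $H$. The key algebraic identity is that $\pa_x\pa_y(\pa_x-\pa_y)S$ is precisely the obstruction to writing $S$ as $p(x)+q(y)+r(x+y)$; concretely, after the substitution $(x,y)\mapsto(x, x+y)$ turning $f_3$ into a function of the second coordinate, one differentiates the new phase in the two "independent" directions and the resulting mixed second derivative equals $H$ composed with the substitution. Then the hypothesis $|H|\ge\mu$ gives $|\pa_x\pa_y\phi|\gtrsim\mu$ on $R$, and the hypotheses on $\delta_2\sup|\pa_y H| + \delta_2^2\sup|\pa_y^2 H|\le B\mu$ together with the analogous bound on $a$ guarantee that the amplitude and phase are "well-behaved at scale $\delta_2$", i.e. after rescaling $y\to\delta_2 y$ the $y$-derivatives of phase and amplitude are bounded by absolute constants depending only on $A,B$. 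The operator van der Corput lemma (Phong–Stein, or its formulation via Lemma 2.3 / Lemma 2.1 here) then yields an $L^2\to L^2$ operator-norm bound of size $(\lambda\mu)^{-1/2}$ for $T$. Interpolating this with the trivial size bound $\min\{\delta_1^{1/2},\delta_2^{1/2}\}$ from Lemma 2.1 — and tracking how the $TT^*$ expansion converts an $L^2$ operator bound of order $(\lambda\mu)^{-1/2}$ in the bilinear form into a bound of order $(\lambda\mu)^{-1/4}$, then a further splitting/interpolation to land at the exponent $1/6$ — produces the claimed $|\La|\le C|\lambda\mu|^{-1/6}\prod\|f_i\|_2$. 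The $1/6$ (as opposed to $1/4$ or $1/2$) is exactly the loss incurred because the convolution constraint $x+y$ couples all three variables; this is the same mechanism that produces the $\lambda^{-1/6}$ in Theorem 1.2 of Li.

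The main obstacle, and the step I would spend the most care on, is bookkeeping the uniformity of the constants through the rescaling and the $TT^*$ argument: one must verify that after dilating the $y$-variable to unit scale, the quantities $\sup|\pa_y(\text{phase})|$, $\sup|\pa_y^2(\text{phase})|$, $\sup|\pa_y a|$, $\sup|\pa_y^2 a|$ are all controlled purely by $A$, $B$ (and not by $\delta_1,\delta_2,\mu$ separately), so that the operator van der Corput constant depends only on $A,B$. A secondary technical point is that the operator van der Corput lemma requires a lower bound on a \emph{pure} mixed derivative of the phase of the reduced one-dimensional operator and an upper bound on a few of its higher derivatives; checking that the identity relating that mixed derivative to $H$ is exact (no lower-order error terms survive) is where the specific combination $\pa_x\pa_y(\pa_x-\pa_y)$ is essential, and I would present that computation explicitly rather than by appeal. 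Once uniformity is secured, the interpolation with Lemma 2.1 to optimize the exponent is routine.
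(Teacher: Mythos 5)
Your overall plan coincides with the paper's proof of Lemma \ref{van der Corput for trilinear} (to which the paper defers for the proof of this lemma): pair $f_1$ against the bilinear operator
$T_\lambda(f_2,f_3)(x)=\int e^{i\lambda S(x,y)}f_2(y)f_3(x+y)\varphi(x,y)\,\dif y$, square and expand, change variables to exploit the convolution structure, invoke the one-dimensional operator van der Corput lemma on the inner form, and interpolate with the trivial bound. That is exactly the right architecture.

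There is, however, a genuine gap in how you identify the non-degeneracy of the reduced phase, and it propagates into an incorrect exponent chain. After expanding $\|T_\lambda(f_2,f_3)\|_2^2$ and substituting $z=y+u$, $v=x+y$, the phase of the inner bilinear form in $(v,y)$ is $\widetilde S_u(v,y)=S(v-y,y)-S(v-y,y+u)$, and a direct computation gives
\begin{equation*}
\partial_v\partial_y \widetilde S_u(v,y)=u\int_0^1 H(v-y,\,y+\theta u)\,\dif\theta,
\end{equation*}
so the relevant mixed Hessian is comparable to $\mu|u|$, \emph{not} to $\mu$. Your statement that ``the resulting mixed second derivative equals $H$ composed with the substitution'' is wrong: a single change of variables $(x,y)\mapsto(x,x+y)$ produces $\partial_y(\partial_x-\partial_y)S$, a second-order quantity, and the remaining $\partial_x$ in $H=\partial_x\partial_y(\partial_x-\partial_y)S$ only appears after the $TT^*$ differencing, which is exactly what brings in the factor $u$. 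This factor is not an error term; it is the crux of the numerology. For each fixed $u$ the operator van der Corput lemma gives $(|\lambda\mu u|)^{-1/2}$, while the trivial $L^1$ bound contributes $O(1)$ per unit length in $u$, and integrating $\min\{(|\lambda\mu u|)^{-1/2},1\}$ over the dyadic blocks of $u$ yields $(|\lambda\mu|)^{-1/3}$ for $\|T_\lambda(f_2,f_3)\|_2^2$; taking a square root gives $(|\lambda\mu|)^{-1/6}$. Your path $(\lambda\mu)^{-1/2}\to(\lambda\mu)^{-1/4}\to(\lambda\mu)^{-1/6}$ presupposes a $u$-uniform inner bound $(\lambda\mu)^{-1/2}$, which is what you would (incorrectly) obtain by dropping the $u$ degeneracy; if that were available, the argument would actually prove $(\lambda\mu)^{-1/4}$ and no further interpolation would degrade it to $1/6$. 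The correct chain is $1/2\to 1/3$ (integration in $u$ against the trivial bound) $\to 1/6$ (square root). You should also record the straightforward but necessary Cauchy--Schwarz bookkeeping in $u$: $\int\|\widetilde g_u\|_2\|\widetilde h_u\|_2\,\dif u\le\|g\|_2^2\|h\|_2^2$ and $\|\widetilde g_u\|_1\le\|g\|_2^2$, which are what allow one to sum the dyadic pieces. Once the $u$ factor is in place, the uniformity of constants in $A,B$ that you flag follows exactly as you anticipate, since only $y$-derivatives of the amplitude and phase (at scale $\delta_2$) enter the inner van der Corput estimate.
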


\subsection{Resolution of singularities}\label{Resolution of singularities}

In this section, we are going to review the useful resolution of singularities due to Greenblatt\cite{greenblatt2}. It will play an important role in our proof of Theorem \ref{Th-main}.

Assume $H$ is a real-valued smooth function near the origin in $\bR^2$. Suppose $H$ is of finite type, i.e., there exist nonnegative integers $\alpha, \beta, (\alpha, \beta) \ne (0, 0)$, such that $\pa_x^\alpha \pa_y^\beta H(0,0) \neq 0$. For simplicity, we also assume $H(0,0)=0$.

Now we shall present resolution of singularities of $H$ away from the coordinate axes. Let $H(x,y) \sim \sum_{\alpha,\beta \ge 0} C_{\alpha,\beta} x^\alpha y^\beta$ be the formal Taylor's expansion of $H$. Then the Newton polyhedron $N(H)$ of $H$ is the convex hull of
$\bigcup_{\alpha,\beta\in \mathbb{N}}\{(x,y):x\geq \alpha,~y\geq\beta\}$, where the union is taken over all $\alpha,\beta$ such that $C_{\alpha,\beta}\neq 0$. The vertices of $N(H)$ are denoted by $(A_1, B_1), \cdots, (A_n, B_n)$, where $A_1<A_2<\cdots<A_n$ and $B_1>B_2>\cdots>B_n$. The case $n \ge 2$ will be considered since resolution of singularities is not necessary for $n=1$. This means that we assume $N(H)$ has at least two vertices throughout this section.

Let $-\frac{1}{M_i}$ be the slope of the line joining $(A_i, B_i)$ and $(A_{i+1}, B_{i+1})$. For any real number $c$, it is easy to see that
\[ H(x,cx^{M_i})=p_i(c)x^{d_i}+ o(x^{d_i}), x\to 0.\]
Here $p_i \in \bR[t]$ is a real polynomial. If $\Gamma_i$ denotes the compact face of $N(H)$ joining $(A_i,B_i)$ and $(A_{i+1},B_{i+1})$, then $p_i$ is given by
\begin{equation}\label{homogeneous polynomial}
 p_i(t)= \sum_{(\alpha, \beta)\in \Gamma_i} C_{\alpha, \beta} t^\beta.
 \end{equation}
One has $H(x,cx^{M_i})= Ax^d+ o(x^d)$ for some $A\ne 0$ unless $p_i(c)=0$. In contrast with the real analytic setting, we cannot obtain an algebraic curve $\gamma(x)$ by the Newton-Puiseux algorithm such that $\gamma(x)=Cx^{M_i}+o(x^{M_i})$ and $H(x,\gamma(x)) \equiv 0$. For a smooth phase, we shall follow Greenblatt's resolution of singularities.

Without loss of generality, we assume $p_i(c)=0$ and $c>0$. If $c$ is a complex root with ${\rm Im}(c)\ne 0$, then there $\gamma(x)=cx^{M_i}+o(x^{M_i})$ will not occur in the plane $\bR^2$. We consider $c>0$ since the treatment of $c<0$ is similar to that of $c>0$.

For a sufficiently small $\ep>0$, we shall restrict our attention to the following curved triangle:
$$ U= \{(x,y):x>0,~(c-\ep)x^{M_i}<y<(c+\ep)x^{M_i}\}.$$
If $j \in \bZ, j<0$ and $|j|$ is sufficiently large, we set
\begin{equation}\label{def Uj}
 U_j= \{(x,y): x\sim 2^j,~ (c-\ep)x^{M_i}<y<(c+\ep)x^{M_i} \},
\end{equation}
where $x\sim 2^j$ denotes $2^{j-1}\leq x \leq 2^{j+1}$. For a small number $\mu>0$, we can cover $U_j$ by finitely many dyadic squares with side length $\mu 2^{Mj}$, $M=\max\{M_i,1\}$, such that if $R$ is a dyadic square of dimension $\mu 2^{Mj} \times \mu 2^{Mj}$ and $R \bigcap U_j \ne \varnothing$, then its double $R^*\subseteq U_j^*$. Here $R^*$ has the same center as $R$, and its side length is twice as that of $R$. The domain $U_j^*$ is defined by
$$ U_j^*= \{(x,y): x\sim 2^j,~(c-2\ep)x^{M_i}<y<(c+2\ep)x^{M_i} \}.$$ The positive parameter $\ep$ is so small that the polynomial $p_i$ has no root other than $c$ in the interval $(c-2\ep, c+2\ep)$.

For convenience, we introduce some notations. If $R$ is a rectangle with sides parallel to the axes in $\bR^2$, we use $l_h(R)$ and $l_r(R)$ to denote the length of the horizontal side and the vertical side, respectively. For a square $R$, its side length is denoted by $l(R)$.

Let $\mathcal{D}$ be the family of all dyadic cubes in $\bR^2$. Now define
$$\cF =\{ R\in \mathcal{D} \l| l(R)= \mu 2^{Mj}, R \bigcap U_j \ne \emptyset \r. \}.$$
With the above preliminaries, we are going to describe the resolution of singularities for $H$ in $U_j$. This stopping time argument depends on whether the following inequality is true or not,
\begin{equation}\label{stopping time}
\sup_R |\nabla H(x,y)| l(R) < \frac14 \sup_R |H(x,y)|,
\end{equation}
 where $R$ is a dyadic square contained in some $\w{R} \in \cF$. For any $R \in \cF,$ if (\ref{stopping time}) holds for $R$, then the process stops. Otherwise, we dyadically divide $R$ into four equal dyadic squares. Then we will obtain a collection of dyadic squares, denoted by $\cF_1$. Each member $R$ of $\cF_1$ has side length either $l(R)=\mu 2^{Mj}$ or $l(R)=\mu 2^{Mj-1}$. For any $R \in \cF_1$, if the inequality (\ref{stopping time}) is true for $R$, then our procedure stops. Otherwise, we divide $R$ into four dyadic squares of side length $l(R)=\mu 2^{Mj-2}$. Let $\cF_2$ be the family of all dyadic squares obtained in this way.

By induction, if the family $\cF_n$ is given, we define $\cF_{n+1}$ be all dyadic squares $R$ satisfying one of the following conditions:\\

\noindent (i) $R\in \cF_n$ and the inequality (\ref{stopping time}) holds for $R$;\\
\noindent (ii) (\ref{stopping time}) does not holds
for some $R\p \in \cF_n$, and $R$ is a dyadic $\frac14$ subsquare in $R\p$.\\

\noindent Then a sequence $\{\cF_n\}$ of families of dyadic squares are obtained. Let $\cF_\infty$ be
 $$   \cF_\infty= \bigcap_{n=1}^\infty \cF_n.$$
 By the above stopping time procedure for resolution of singularities, we have
 $$ \l( \bigcup_{R\in \cF_\infty} R\r) \bigcap U_j= \l\{ (x,y)\in U_j \l| H(x,y) \ne 0 \r. \r\}.$$
 By the Bernstein inequality, i.e., Lemma \ref{Bernstein-type inequality}, we have
 $$ l(R)  \sup_R |\nabla H(x,y)| > \delta_0 \sup_R |H|, R \in \cF_\infty, $$
 for some constant $\delta_0>0$ independent of $R \in \cF_\infty$. Hence either
 \begin{equation}\label{Bernstein-type inequality case 1}
 l(R)  \sup_R \l|\frac{\pa H}{\pa x}(x,y)\r| >\frac{\delta}{2} \sup_R |H|
 \end{equation}
 or
  \begin{equation}\label{Bernstein-type inequality case 2}
 l(R)  \sup_R \l|\frac{\pa H}{\pa y}(x,y)\r| >\frac{\delta}{2} \sup_R |H|
 \end{equation}
 is true for all $0<\delta\leq \delta_0$. Write $R=I\times J$ for $R \in \cF_\infty$. Take $\delta>0$ to be sufficiently small. We shall expand each member $R \in \cF_\infty$ as large as possible such that $H$ stays within a factor of $2$ of a fixed constant on a larger rectangle $\w{R}$. For each $R \in \cF_\infty$, if both (\ref{Bernstein-type inequality case 1}) and (\ref{Bernstein-type inequality case 2}) are true for $R$, then set $\w{R}=R$, otherwise either (\ref{Bernstein-type inequality case 1}) or (\ref{Bernstein-type inequality case 2}) does not hold, say (\ref{Bernstein-type inequality case 1}) for example, let $\w{R}=\w{I} \times J$, where $\w{I}$ is the largest dyadic interval such that $\w{I} \supseteq I,~\w{R} \subseteq U_j^*$, and
 \begin{equation}\label{maximal interval-1}
 |\w{I}| \sup_{\w{R}} \l|\frac{\pa H}{\pa x}(x,y)\r| \le \delta\sup_{\w{R}} |H(x,y)|.
 \end{equation}
Similarly, if (\ref{Bernstein-type inequality case 2}) fails, then choose the largest dyadic interval $\w{J} \supseteq J, \w{R}= I\times \w{J} \subseteq U_j^*$ and
\begin{equation}\label{maximal interval-2}
|\w{J}| \sup_{\w{R}} \l|\frac{\pa H}{\pa y}(x,y)\r| \le \delta \sup_{\w{R}} |H(x,y)|.
\end{equation}
Then we can define a mapping from $\cF_\infty$ into the collection of coordinate rectangles $\mathcal{R}$,
\begin{align*}
 \alpha:~~ & \cF_\infty \longrightarrow \mathcal{R} \\
 & R \longmapsto \w{R}.
\end{align*}
Let $\cG$ be the range of $\alpha$, i.e.
$$ \cG= \l\{ \w{R}=\alpha(R) \l| R\in \cF_\infty \r. \r\}.$$
Now we list some important properties of rectangles in $\cG$.

\begin{theorem}\label{5-1}
{\rm(\cite{greenblatt2}~)}
There exists an exponent $\sigma>0$ and a constant $C>0$ such that for all $R=I\times J \in \cG$, we have
$$ |I|\le C|J|^\sigma, |J| \le C |I|^\sigma.$$
\end{theorem}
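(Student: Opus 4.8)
The plan is to read the conclusion off from the \emph{maximality} built into the map $\alpha\colon\cF_\infty\to\mathcal{R}$, using only the Bernstein inequality of Lemma \ref{Bernstein-type inequality} and its doubling consequence (\ref{Bernstein-type inequality-eq-3}) as analytic input. First I would dispose of the trivial case: if $R=I\times J\in\cG$ comes from a square $R_0\in\cF_\infty$ for which \emph{both} (\ref{Bernstein-type inequality case 1}) and (\ref{Bernstein-type inequality case 2}) hold, then $R=R_0$ is a dyadic square, $|I|=|J|$, and, since every member of $\cG$ lies in the small neighborhood $U$ so that $|I|=|J|<1$, both claimed inequalities hold with $C=1$ and any $\sigma\in(0,1]$. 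Hence I may assume (the $y$-expansion case being symmetric) that $R_0=I_0\times J_0\in\cF_\infty$ failed (\ref{Bernstein-type inequality case 1}), that $\alpha$ expanded the first coordinate, so $J=J_0$ and $I\supseteq I_0$, and, writing $s_0:=l(R_0)=|I_0|=|J_0|$, that only the single estimate $|I|\le Cs_0^{\sigma}$ has to be proved; its companion $|J|=s_0\le C|I|^{\sigma}$ is automatic from $s_0\le|I|<1$.

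The core is a stopping dichotomy forced by the maximality of $I$: since $I$ is the largest dyadic interval with $I\supseteq I_0$, $I\times J\subseteq U_j^{*}$ and (\ref{maximal interval-1}), the doubled rectangle $2I\times J$ either leaves $U_j^{*}$ or violates (\ref{maximal interval-1}). The boundary alternative is governed by the explicit shape of $U_j^{*}$: an elementary computation with $y\sim cx^{M_i}$ and $x\sim 2^{j}$ shows that $U_j^{*}$ has horizontal width $\lesssim\ep\,2^{j}$, so $|I|\lesssim\ep\,2^{j}$ there. In the other alternative I would first record that $H$ stays within a bounded factor of a constant on all of $\w{R}$: integrating $\partial_x H$ along horizontal segments of $\w{R}$ and invoking (\ref{maximal interval-1}) gives $\sup_{\w{R}}|H|\le\sup_{R_0}|H|+\delta\sup_{\w{R}}|H|$, hence $\sup_{\w{R}}|H|\asymp\sup_{R_0}|H|$; together with the relation $l(R)\sup_R|\nabla H|\asymp\sup_R|H|$ valid on $\cF_\infty$, this shows that the expansion stops exactly when $\sup_{\w{R}}|\partial_x H|$ has grown to size $\asymp|I|^{-1}\sup_{\w{R}}|H|$.

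It remains, in both alternatives, to bound $|I|$ by a fixed power of $s_0$, and here the finite type hypothesis is indispensable. For the top-scale squares of $\cF$ the crude bound $|I|\lesssim\ep\,2^{j}$ already suffices, since then $s_0\asymp\mu\,2^{\max\{M_i,1\}j}$. For the smaller squares the guiding picture is that $R_0\in\cF_\infty$ sits at distance $\asymp s_0$ from $\{H=0\}$ (its parent failed (\ref{stopping time}) and so essentially straddled $\{H=0\}$, while Bernstein forces $\sup_{R_0}|\nabla H|\lesssim s_0^{-1}\sup_{R_0}|H|$), that the relevant branch of $\{H=0\}$ near $U_j^{*}$ has slope $\asymp 2^{(M_i-1)j}$, and that a horizontal translate of $R_0$ can be pushed only until it is about to touch $\{H=0\}$; this yields $|I|\lesssim s_0\max\{1,2^{(M_i-1)|j|}\}$, whence $|I|\le Cs_0^{\sigma}$ for a suitable $\sigma>0$ depending only on $M_i$ after inserting $s_0\le\mu\,2^{\max\{M_i,1\}j}$ (one expects $\sigma=\min\{M_i,1/M_i\}$). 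To convert this into a proof in the merely smooth category, where $\{H=0\}$ need not be a graph, I would not argue with the zero set but instead quantify flatness: were $|I|$ too large a power of $s_0$, then (\ref{maximal interval-1}) together with the Bernstein bounds $\sup_{\w{R}}|\partial_x^{\alpha}\partial_y^{\beta}H|\lesssim|I|^{-\alpha}|J|^{-\beta}\sup_{\w{R}}|H|$ for $\alpha,\beta\le2$---applied, where needed, on nested subrectangles on which the aspect-ratio hypothesis of Lemma \ref{Bernstein-type inequality} still holds---would make $H$ too flat in the $x$-direction on $\w{R}$ to be of finite type there, contradicting $|\partial^{\gamma}H|\ge c_0>0$ on $U$ for a finite-type multiindex $\gamma=(\gamma_1,\gamma_2)$ with $\gamma_1\ge1$; taking the expansion parameter $\delta$ small makes the contradiction quantitative. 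Combining the two alternatives and taking the smaller exponent and the larger constant then finishes the proof.

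The step I expect to be the genuine obstacle is this last one: turning the distance-to-$\{H=0\}$ heuristic into a rigorous statement for a smooth, non-analytic $H$, that is, proving that an overly elongated rectangle on which $H$ stays within a bounded factor of a constant is incompatible with $H$ being of finite type. Part of the difficulty is that Lemma \ref{Bernstein-type inequality} only controls $x$- and $y$-derivatives of $H$ up to order two, so reaching a finite-type multiindex of higher order requires iterating it along a chain of nested subrectangles while keeping the aspect ratios inside the admissible range. Two further, smaller nuisances are the borderline aspect ratio $|I|\asymp Ns_0^{\delta}$, where Lemma \ref{Bernstein-type inequality} is exactly at the edge of its hypotheses, and the degenerate Newton configuration in which every finite-type direction of $H$ lies on the $y$-axis (so that no multiindex $\gamma$ with $\gamma_1\ge1$ exists), which must be isolated and dispatched using the geometric boundary estimate alone.
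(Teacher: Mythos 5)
The paper does not prove Theorem \ref{5-1}; it is imported verbatim from Greenblatt \cite{greenblatt2}, so there is no internal proof to compare against. Taken on its own, your outline has the right skeleton --- reduce to the case where $\alpha$ expands only one coordinate, then exploit the maximality of $\w I$ through the dichotomy ``$2I\times J$ leaves $U_j^*$, or (\ref{maximal interval-1}) is about to fail'' --- and you are candid about where the difficulty lies. But there are concrete gaps beyond the one you flag. First, the boundary alternative $|I|\lesssim\ep\,2^{j}$ does not close the argument for deep stopping-time squares: with $s_0=\mu 2^{Mj-l}$ and $l$ large, $|I|/s_0^{\sigma}\gtrsim 2^{\sigma l}$ is unbounded, so even when $2I\times J$ escapes $U_j^*$ one must \emph{still} show that (\ref{maximal interval-1}) would have failed earlier, which requires exactly the structural input you postpone. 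Second, the claim that $R_0\in\cF_\infty$ sits at distance $\asymp s_0$ from $\{H=0\}$ does not follow from the stopping rule: failure of (\ref{stopping time}) on the parent only says $|\nabla H|$ is not small relative to $|H|$ at that scale, not that $H$ vanishes nearby, and in the smooth category this is precisely the hole you acknowledge.

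As for the mechanism, the correct lever is not flatness-versus-finite-type in the abstract but the multiplicity $r$ of the root $c$ of $p_i$. The choice of $\ep$ forces $p_i$ to have no other zero in $(c-2\ep,c+2\ep)$, which yields a lower bound $\inf_{\w R}|\pa^r H|\gtrsim 2^{(d_i-r)j}$ (in the direction of expansion) on the enlarged rectangle; Christ's Lemma \ref{sup estimate} then gives $\sup_{\w R}|H|\gtrsim l_h(\w R)^{r}\inf_{\w R}|\pa_x^r H|$, and comparing with the upper bound $\sup_{\w R}|H|\lesssim 2^{d_ij}$ caps the horizontal side. This is exactly the arithmetic the paper itself carries out later in Lemma \ref{lem6-2} (stated as a consequence of Theorems \ref{5-1} and following, but revealing the mechanism). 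In particular your guessed exponent $\sigma=\min\{M_i,1/M_i\}$ cannot be right: $\sigma$ must degenerate with the multiplicity, roughly like $1/r$. Finally, your worry about pushing Lemma \ref{Bernstein-type inequality} past order two is well placed and interacts badly with the logic: that lemma already presupposes a power-type aspect-ratio bound, which is what Theorem \ref{5-1} asserts. In Greenblatt's development the circularity is broken by proving the aspect-ratio bound and the derivative comparability \emph{simultaneously} by induction along the stopping-time tree; an argument that applies the Bernstein inequality directly to the final expanded rectangle $\w R$, as yours does, cannot be made rigorous without that scaffolding.
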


For $\ep>0$, let $R^{*,1+\ep}$ be the rectangle with the same center as $R$ and expanded by the factor $1+\ep$.

\begin{theorem}\label{5-2}
{\rm(\cite{greenblatt2}~)}
There exists a small number $\ep_0>0$, such that $\exists C_1, C_2>0$,
$$ C_1 \sup_{R^{*,1+\ep}} |H(x,y)| \le \inf_{R^{*,1+\ep}}|H(x,y)|,$$
$$  \sup_{R^{*,1+\ep}} |H(x,y)| \le C_2 \sup_{R}|H(x,y)| $$
for all $R \in \mathcal{G}$ and $0<\ep<\ep_0$.
\end{theorem}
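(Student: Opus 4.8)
\medskip
\noindent\textit{Proof strategy.}
The plan is to derive Theorem \ref{5-2} from two inputs: that $H$ is already comparable to a fixed nonzero constant on each $R=\w R\in\cG$, which is essentially how $\cG$ was built, and the Bernstein-type estimate of Lemma \ref{Bernstein-type inequality}, which on a mild dilate of $R$ bounds $|\pa_x H|$ and $|\pa_y H|$ by $\sup_R|H|$ divided by the corresponding side length. First I would make the comparability on $R$ precise. If no enlargement occurred, so that $\w R=R'$ with $R'\in\cF_\infty$ a square, then the stopping-time inequality \eqref{stopping time} holds for $R'$, and the oscillation of $H$ over $R'$ is at most $\operatorname{diam}(R')\,\sup_{R'}|\nabla H|<\tfrac{\sqrt2}{4}\sup_{R'}|H|$, hence $\inf_{R'}|H|\ge\tfrac12\sup_{R'}|H|$. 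If instead $\w R=\w I\times J$ was obtained by enlarging $R'$ in the $x$-direction (the $y$-direction case being symmetric), then for any $(x,y)\in\w R$ I would choose $x_0\in I$ and walk first from $(x,y)$ to $(x_0,y)$ --- a segment at fixed height $y\in J$ lying in $\w R$, which costs at most $|\w I|\sup_{\w R}|\pa_x H|\le\delta\sup_{\w R}|H|$ by \eqref{maximal interval-1} --- and then from $(x_0,y)\in R'$ to the center $p_0$ of $R'$, which costs at most $\operatorname{diam}(R')\,\sup_{R'}|\nabla H|<\tfrac{\sqrt2}{4}\sup_{R'}|H|\le\tfrac{\sqrt2}{4}\sup_{\w R}|H|$ by \eqref{stopping time}. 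Taking first suprema and then infima over $(x,y)\in\w R$ in the bound $|H(x,y)-H(p_0)|\le(\delta+\tfrac{\sqrt2}{4})\sup_{\w R}|H|$ yields $\inf_{\w R}|H|\ge c_0\sup_{\w R}|H|$ with the universal constant $c_0=1-2\delta-\tfrac{\sqrt2}{2}>0$, since $\delta$ is taken small in the construction; the essential point is that the enlargement is in a single coordinate direction, so the walk back into $R'$ never leaves that direction, where $\pa_x H$ is controlled.

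Next I would pass from $R$ to $R^{*,1+\ep}$. By Theorem \ref{5-1} the sides of $R=I\times J\in\cG$ satisfy $|I|\le C|J|^{\sigma}$ and $|J|\le C|I|^{\sigma}$, and the same relations hold, with $C$ replaced by $2C$, for the sides $(1+\ep)|I|,(1+\ep)|J|$ of $R^{*,1+\ep}$ whenever $\ep<1$. Hence, taking the parameters $\delta,N$ of Lemma \ref{Bernstein-type inequality} to be $\sigma$ and $2C$, the rectangle $R^{*,1+\ep}$ meets the hypotheses of that lemma provided $\ep_0$ is small enough that $R^{*,1+\ep}$ still lies in the neighborhood of the origin it provides; this inclusion is a routine consequence of the definitions \eqref{def Uj} of $U_j,U_j^*$ and of the fact that doubles of squares in $\cF$ already lie inside $U_j^*$. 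Granting it, the second inequality of Theorem \ref{5-2} is immediate from \eqref{Bernstein-type inequality-eq-3}: the rectangle concentric with $R^{*,1+\ep}$ of half its dimensions lies inside $R$ because $\tfrac{1+\ep}{2}<1$, so $\sup_{R^{*,1+\ep}}|H|<C_2\sup_R|H|$.

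For the first inequality I would control the variation of $H$ on $R^{*,1+\ep}$. By \eqref{Bernstein-type inequality-eq-2} applied on $R^{*,1+\ep}$,
\begin{align*}
\sup_{R^{*,1+\ep}}|\pa_x H| &\le \frac{C}{(1+\ep)|I|}\,\sup_{R^{*,1+\ep}}|H|, \\
\sup_{R^{*,1+\ep}}|\pa_y H| &\le \frac{C}{(1+\ep)|J|}\,\sup_{R^{*,1+\ep}}|H|.
\end{align*}
Every $p\in R^{*,1+\ep}$ lies within $x$-distance $\tfrac{\ep}{2}|I|$ and $y$-distance $\tfrac{\ep}{2}|J|$ of some $p'\in R$, so $|H(p)-H(p')|\le C\ep\,\sup_{R^{*,1+\ep}}|H|$. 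Combining this with the comparability on $R$ from the first step and with the second inequality,
\begin{align*}
|H(p)| &\ge |H(p')|-C\ep\,\sup_{R^{*,1+\ep}}|H| \ge \inf_R|H|-C\ep\,\sup_{R^{*,1+\ep}}|H| \\
&\ge \Big(\frac{c_0}{C_2}-C\ep\Big)\sup_{R^{*,1+\ep}}|H|.
\end{align*}
Choosing $\ep_0=c_0/(2CC_2)$ makes the right-hand side at least $\tfrac{c_0}{2C_2}\sup_{R^{*,1+\ep}}|H|$, which is the first inequality with $C_1=c_0/(2C_2)$.

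I expect the first step to be the main obstacle: establishing that $H$ remains comparable to a nonzero constant on the \emph{whole} enlarged rectangle $R=\w R$, and not merely on the small square $R'$, is exactly where the one-directional nature of the enlargement and the maximality conditions \eqref{maximal interval-1}--\eqref{maximal interval-2} must be used, and where the smallness of the construction parameter $\delta$ is needed to keep $c_0>0$. A secondary technical nuisance is to verify, uniformly over all the curved pieces $U_j$ and over all $R\in\cG$, that $R^{*,1+\ep}$ stays inside the region on which Lemma \ref{Bernstein-type inequality} and Theorem \ref{5-1} are valid, so that a single $\ep_0$ works for every $R$.
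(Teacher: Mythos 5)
The paper does not prove Theorem \ref{5-2} itself; it is quoted from Greenblatt \cite{greenblatt2} as part of the toolkit, so there is no in-text argument to compare yours against. Taken on its own terms, your proof is correct and reconstructs the argument one would find in \cite{greenblatt2}: comparability of $|H|$ on the original stopped square $R'\in\cF_\infty$ from the stopping-time condition \eqref{stopping time}, extension to the one-directionally enlarged $\w R$ via the maximality conditions \eqref{maximal interval-1}--\eqref{maximal interval-2} (the key being that the enlargement is along a single axis where $\pa_x H$, say, is small relative to $\sup|H|$), and then passage to $R^{*,1+\ep}$ via Lemma \ref{Bernstein-type inequality}, whose aspect-ratio hypothesis is supplied by Theorem \ref{5-1}. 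Two small things worth being explicit about: your universal $c_0 = 1-2\delta-\tfrac{\sqrt2}{2}$ is positive only when the construction parameter $\delta$ satisfies $\delta<\tfrac12\bigl(1-\tfrac{\sqrt2}{2}\bigr)$, which is consistent with the paper's ``take $\delta>0$ sufficiently small'' but should be recorded as a constraint; and to apply \eqref{maximal interval-1} with $\sup_{\w R}$ rather than $\sup_{R'}$ you are implicitly using that the maximality condition is stated over the enlarged rectangle $\w R$ (it is), so that step is fine as written.
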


To obtain a suitable partition of unity on $\l\{(x,y)\in U_j: H(x,y)\ne 0 \r\},$ one expects that appropriate dilations of rectangles in the collection $\cG$ have bounded overlap property. This is true even though members in $\cG$ come from various dyadic squares by performing a complicate procedure.

\begin{theorem} \label{5-3}
{\rm(\cite{greenblatt2}~)}
There exists an $\ep_0>0$ such that for $0<\ep<\ep_0$, the collection $\l\{ R^{*,1+\ep} \l| R\in \cG \r. \r\}$ has bounded overlap property in $U_j$, i.e., there exists an integer $ N \ge 1$ such that each point in $U_j$ is contained in at most $N$ rectangles $R^{*,1+\ep}$.
\end{theorem}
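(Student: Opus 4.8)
\noindent\textit{Proof plan.}\quad The plan is to convert the bounded-overlap assertion into a pointwise counting estimate and then play the two ``shape'' theorems \ref{5-1}--\ref{5-2} against the stopping-time construction. Fix a point $p=(x_0,y_0)\in U_j$ and set $\mathcal{C}(p)=\{R\in\cG:\ p\in R^{*,1+\ep}\}$; it suffices to bound $\#\mathcal{C}(p)$ by a constant independent of $p$ and of $j$. By Theorem \ref{5-2}, every $R\in\mathcal{C}(p)$ satisfies $\inf_{R^{*,1+\ep}}|H|\ge C_1\sup_{R^{*,1+\ep}}|H|\ge C_1\sup_R|H|$; hence $H(p)\ne 0$ is forced (a point of $\{H=0\}$ lies in no dilated rectangle, and that set being negligible we may assume $H(p)\ne 0$), and, writing $h=|H(p)|$, we get $|H|\sim h$ on $R^{*,1+\ep}$ and also $\sup_{R_0}|H|\sim h$ for the generating square $R_0\in\cF_\infty$ of $R$ (since $R_0\subseteq R\subseteq R^{*,1+\ep}$). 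So all competitors ``see'' a common size $h$ of $H$ near $p$.

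Next I would record, for each $R=\alpha(R_0)\in\cG$, the axis in which the resolution terminated: at most one of the alternatives (\ref{Bernstein-type inequality case 1}), (\ref{Bernstein-type inequality case 2}) fails for $R_0$, and the expansion (\ref{maximal interval-1})--(\ref{maximal interval-2}) is carried out only in the failing direction, so the other (non-expanded) axis — call it the \emph{fast} axis of $R$, choosing either if both alternatives hold — satisfies $l_{\mathrm{fast}}(R)=l(R_0)=\min\{l_h(R),l_r(R)\}$. Failure of (\ref{stopping time}) for $R_0$ together with $\sup_{R_0}|H|\sim h$ gives $\sup_{R^{*,1+\ep}}|\pa_{\mathrm{fast}}H|\gtrsim h/l(R_0)$, while the defining inequality for $\widetilde I$ or $\widetilde J$ gives $\sup_{R^{*,1+\ep}}|\pa_{\mathrm{slow}}H|\lesssim \delta\,h/l_{\mathrm{slow}}(R)$; and because $\widetilde I$ (resp.\ $\widetilde J$) is the \emph{largest} admissible dyadic interval, doubling it violates (\ref{maximal interval-1}) (resp.\ (\ref{maximal interval-2})), which yields a matching lower bound $\sup|\pa_{\mathrm{slow}}H|\gtrsim h/l_{\mathrm{slow}}(R)$ on the rectangle concentric with $R$ of twice the slow-side length (modulo the easy case in which that doubling reaches $\pa U_j^*$). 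I would then split $\mathcal{C}(p)$ into its $x$-fast and $y$-fast subfamilies and bound each; by the $x\leftrightarrow y$ symmetry it is enough to bound the $y$-fast family $\mathcal{C}_y(p)$.

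For $R\in\mathcal{C}_y(p)$ the Bernstein inequality, Lemma \ref{Bernstein-type inequality}, applies on $R^{*,1+\ep}$ — its aspect-ratio hypothesis $r_1\le Nr_2^{\delta},\ r_2\le Nr_1^{\delta}$ is exactly Theorem \ref{5-1} (with $N,\delta$ the constants $C,\sigma$ there), and for $|j|$ large the rectangles lie in the required neighbourhood of the origin, the finitely many remaining $j$ being harmless — and it converts $|H|\sim h$ on $R^{*,1+\ep}$ into $\sup_{R^{*,1+\ep}}|\pa_y^aH|\lesssim h\,l_r(R)^{-a}$ and $\sup_{R^{*,1+\ep}}|\pa_x^b\pa_y^aH|\lesssim h\,l_h(R)^{-b}l_r(R)^{-a}$ for $0\le a,b\le 2$. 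Applying these on the largest member of $\mathcal{C}_y(p)$ and comparing with the scale-appropriate \emph{lower} bounds $\sup|\pa_yH|\gtrsim h/l_r(R)$ (fast axis) and $\sup|\pa_xH|\gtrsim h/l_h(R)$ (from maximality of the slow expansion, valid just outside smaller competitors), one shows that both $l_r(R)$ and $l_h(R)$ take only $O(1)$ distinct dyadic values over $\mathcal{C}_y(p)$: a much smaller value of $l_r$ (resp.\ $l_h$) would force $\pa_yH$ (resp.\ $\pa_xH$) to be simultaneously of size $\gg h/\rho_{\max}$ and $\lesssim h/\rho_{\max}$ at a common point near $p$, a contradiction once the ratio exceeds the implicit constants. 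Once all side lengths of members of $\mathcal{C}_y(p)$ are comparable, every such $R$ — hence its generating square $R_0$, whose side is $\sim l_r(R)$ — meets a fixed box around $p$ of those dimensions; since the squares $\{R_0\}$ have pairwise disjoint interiors by construction of $\cF_\infty$, only $O(1)$ of them fit, so $\#\mathcal{C}_y(p)=O(1)$. Adding the $x$-fast and $y$-fast bounds gives $\#\mathcal{C}(p)\le N$ uniformly, which is the claim.

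The step I expect to be the real obstacle is the scale-reduction in the third paragraph. The difficulty is that two members of $\mathcal{C}_y(p)$ can have very different \emph{horizontal} side lengths — only the vertical side is the ``fast'' one — so the dilate of a small competitor need not be contained in any controlled dilate of a large one; consequently the Bernstein upper bound for $\pa_yH$ or $\pa_xH$ coming from the larger rectangle cannot be transported to the smaller one by a naive inclusion, and one must instead show that near $p$ all competitors probe a common patch on which the Bernstein estimates for $H$ and its first derivatives hold simultaneously. Carrying this out forces one to use the finer features of the construction — the covering of $U_j$ by $\cF$-squares whose doubles stay in $U_j^*$, the maximality of $\widetilde I$ and $\widetilde J$, and the uniformity range $\ep_0$ of Theorem \ref{5-2} — and is precisely the portion worked out in detail in Greenblatt \cite{greenblatt2}.
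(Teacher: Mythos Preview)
The paper does not give its own proof of Theorem \ref{5-3}; the result is quoted from Greenblatt \cite{greenblatt2} (along with Theorems \ref{5-1}, \ref{5-2}, \ref{5-4}) and used as a black box in \S\ref{completion of the proof}, so there is nothing in this paper to compare your proposal against.

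As for the proposal itself, your outline is broadly the right shape --- pin $|H|\sim h$ on every competitor via Theorem \ref{5-2}, split by which axis was expanded, and use Bernstein-type control to force comparable side lengths --- and you correctly flag the scale-reduction step as the genuine work. Two small corrections: first, the stopping condition (\ref{stopping time}) \emph{holds} for every $R_0\in\cF_\infty$ (that is why the subdivision stopped); the lower bound $\sup_{R_0}|\nabla H|\gtrsim \sup_{R_0}|H|/l(R_0)$ that you want comes instead from the failure of (\ref{stopping time}) on the \emph{parent} dyadic square together with the doubling inequality (\ref{Bernstein-type inequality-eq-3}), which is exactly how the paper obtains the constant $\delta_0$ just after (\ref{stopping time}). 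Second, in your final counting step you should be explicit that once all $R\in\mathcal{C}_y(p)$ have comparable $l_h(R)$ and $l_v(R)$, each such $R$ (not merely $R^{*,1+\ep}$) lies in a fixed $O(l_h)\times O(l_v)$ box around $p$, hence so does its generating square $R_0\subseteq R$; the pairwise disjointness of the $R_0$'s then finishes the count. Your phrasing ``$R_0$ meets a fixed box around $p$'' is correct for this reason, but only after the horizontal scales have been pinned --- without that, $R_0$ could sit anywhere along the long horizontal side of $R$, far from $p$.
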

Moreover, the collection of rectangles $\l\{R^{*,1+\ep} \l| R\in \cG \r. \r\}$ has an appropriate mutual orthogonality. Let $\cG(k,m,n) \subseteq \cG $ consist of all $R\in \cG$ such that $R=I\times J$ satisfies \\
(i) $l_h(R)=|I|=\mu 2^{m+j}, l_v(R)=|J|=\mu 2^{n+j};$ \\
(ii) $2^{k+d_i j-1}\le \sup\limits_R |H(x,y)|< 2^{k +d_i j}$. \\
Then $\cG(k,m,n)$ has the following almost orthogonality property.

\begin{theorem}\label{5-4}
{\rm(\cite{greenblatt2}~)}
There exists a positive integer $N$ such that for all $k,m,n \in \bZ$ and any horizontal line or vertical line $L$, there are at most $N$ members in $\l\{R^{*,1+\ep} \l| R \in \cG(k,m,n) \r. \r\}$ having nonempty intersection with $L$.
\end{theorem}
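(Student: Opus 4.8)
The plan is to fix a horizontal line $L=\{y=y_0\}$ — the case of a vertical line is symmetric — and to bound the number of $R\in\cG(k,m,n)$ whose dilate $R^{*,1+\ep}$ meets $L$. The argument rests on two rigidities. First, all members $R=I\times J$ of $\cG(k,m,n)$ share the same dimensions, $|I|=a:=\mu 2^{m+j}$ and $|J|=b:=\mu 2^{n+j}$, and by Theorem \ref{5-2} together with the definition of $\cG(k,m,n)$ the quantity $|H|$ is, uniformly over all such $R$, comparable to the single dyadic constant $2^{k+d_i j}$ on the whole of $R^{*,1+\ep}$ when $\ep<\ep_0$. Second, since these rectangles are all translates of one fixed shape, if $R^{*,1+\ep}$ meets $L$ then a slightly larger dilate $R^{*,1+\ep'}$ — still admissible, hence still of bounded overlap by Theorem \ref{5-3} — contains an honest horizontal chord of $L$ of length $\gtrsim a$ along which $|H(\cdot,y_0)|\approx 2^{k+d_i j}$. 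So every rectangle to be counted contributes a definite segment of $L$ lying in the fixed ``layer'' $\{|H|\approx 2^{k+d_i j}\}$, and the problem becomes: along the arc $L\cap U_j^{*}$, how many disjoint such segments, of length $\approx a$ and with $\lesssim 1$ mutual overlap, can occur?

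To control this I would first record the pointwise dichotomy built into the resolution. For $R\in\cF_\infty$ one has $\delta_0\sup_R|H|<l(R)\sup_R|\nabla H|<\tfrac14\sup_R|H|$, so for each coordinate exactly one of the alternatives ``the partial is large'' (as in (\ref{Bernstein-type inequality case 1}), (\ref{Bernstein-type inequality case 2})) or ``the partial is small'' holds; inserting this into the Bernstein bounds of Lemma \ref{Bernstein-type inequality} (applicable on $R^{*,1+\ep}$ by Theorem \ref{5-1} and a dilation, for $\ep$ small) yields that on $R^{*,1+\ep}$ the dominant first partial is comparable to $2^{k+d_i j}/a$, respectively $2^{k+d_i j}/b$, and, after a harmless shrinking, keeps a fixed sign, its second partial being too small to turn it over. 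Split $\cG(k,m,n)$ into the \emph{transverse} rectangles, on which $|\pa_xH|\approx 2^{k+d_i j}/a$, and the \emph{tangential} ones, on which $\pa_xH$ is small — these are exactly the rectangles expanded in the $x$-direction, for which (\ref{maximal interval-1}) holds — and $|\pa_yH|\approx 2^{k+d_i j}/b$. On a transverse rectangle meeting $L$ the chord above carries $\pa_xH(\cdot,y_0)$ of one sign and size $\approx 2^{k+d_i j}/a$, so $H(\cdot,y_0)$ runs monotonically through a variation $\approx 2^{k+d_i j}$ while staying in a band of that same height; hence a single maximal interval of monotonicity of $H(\cdot,y_0)$ can host only $O(1)$ such chords, and, using also that $y_0$ lies in the $(1+\ep)$-dilate of at most three length-$b$ dyadic intervals, the count of transverse rectangles meeting $L$ is reduced to counting the monotone intervals of $H(\cdot,y_0)$ inside $U_j^{*}$.

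The last reduction, and the tangential rectangles, are where the real work lies, and they call on the fine structure of the resolution rather than on Theorems \ref{5-1}--\ref{5-3} alone. The region $U_j$ was built around a single curve $y=cx^{M_i}$, with $c$ the only root in $(c-2\ep,c+2\ep)$ of the edge polynomial $p_i$, and one must turn this into a uniform statement that $H(\cdot,y_0)$ has at most $O(1)$ critical points on $L\cap U_j^{*}$ — equivalently, that $\{|H|\approx 2^{k+d_i j}\}$ has $O(1)$ components there — with a bound depending only on the resolution data, ultimately on the multiplicity of $c$ as a root of $p_i$ (which is at most $d_i$); this is the same Łojasiewicz/Puiseux-type finiteness that in the real-analytic setting is automatic and in the smooth setting is supplied by Greenblatt's stopping-time construction. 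For the tangential rectangles one cannot use monotonicity of $H$ along $L$, so instead one invokes the maximality encoded in (\ref{maximal interval-1}): a tangential $\w I\times J$ fails to double either because its parent leaves $U_j^{*}$ — which, for a fixed strip $J$, happens for only $O(1)$ of them, namely near the two ends of the arc and the $O(1)$ meetings with the curved boundary — or because $\pa_xH$ stops being small on the sibling of $\w I$, and concatenating siblings once more forces $H(\cdot,y_0)$ to exhaust its fixed dyadic band across any sufficiently long run. Carrying all of this out quantitatively, uniformly in $k,m,n,j$, is the delicate core of the proof; the remaining steps are routine bookkeeping with the dyadic grid and the bounded-overlap property of Theorem \ref{5-3}.
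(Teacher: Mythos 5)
This theorem is quoted directly from Greenblatt \cite{greenblatt2}; the paper states it without proof, so there is no internal argument of the paper to compare yours against. Evaluating your sketch on its own terms, there is a genuine gap, and you flag it yourself. After fixing a horizontal line $L=\{y=y_0\}$ and a dyadic $J$ (there are $O(1)$ choices of $J$ whose $(1+\ep)$-dilate contains $y_0$), the heart of the matter is to bound the number of disjoint dyadic intervals $I$ of a fixed length such that $I\times J\in\cG(k,m,n)$. Your plan is to reduce this to counting the maximal intervals of monotonicity of the slice $x\mapsto H(x,y_0)$ inside $U_j^{*}$ and then appeal to a finiteness property ``supplied by the stopping-time construction.'' But for a general $C^\infty$ function no such bound exists a priori; producing it uniformly in $k,m,n,j$ from Greenblatt's specific stopping-time and expansion procedure is precisely the content of the theorem, not bookkeeping, and the sketch does not show how to extract it from Theorems \ref{5-1}--\ref{5-3} or Lemma \ref{lem6-1}. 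The same unfinished step is invoked again for the tangential rectangles, where ``concatenating siblings'' is asserted to exhaust the dyadic band without a quantitative argument.

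There is also a local soft spot in the transverse count. You assert that on a transverse rectangle $\pa_x H(\cdot,y_0)$ keeps a fixed sign and is two-sided comparable to $W_{j,R}/r_1$, so that $H(\cdot,y_0)$ is monotone across the chord. But Lemma \ref{lem6-1} and (\ref{eq6-1}) give only the supremum lower bound $\sup_R|\pa_x H|\gea W_{j,R}/r_1$, together with the upper bounds $\sup_R|\pa_x^2 H|\lea W_{j,R}/r_1^2$ and (via Lemma \ref{Bernstein-type inequality}) $\sup_R|\pa_x\pa_y H|\lea W_{j,R}/(r_1 r_2)$. These upper and lower bounds are of the same order, so they do not rule out a zero, hence a sign change, of $\pa_x H$ inside $R^{*,1+\ep}$; the monotonicity on which your transverse count rests is therefore not established by the tools you cite. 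The overall split into transverse and tangential rectangles according to the dominant first partial, and the intent to exploit the maximality condition (\ref{maximal interval-1}), are reasonable starting points, but the actual counting is not carried out.
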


\subsection{Estimates away from singularities and coordinate axes}

For simplicity, we begin with our decay estimates in a curved triangle domain away from singularities and coordinate axes. Let $H(x,y)=\pa_x \pa_y (\pa_x -\pa_y) S(x,y)$. Assume $(A_i,B_i)$ is the intersection of two compact edges of $N(H)$, denoted by $\Gamma_{i-1}$ and $\Gamma_i$. Let $p_{i-1}$ and $p_i$ be the polynomials associated with $\Gamma_{i-1}$ and $\Gamma_i$, respectively. It is more convenient to focus our attention in the first quadrant. The argument is similar in other quadrants.

Let $\delta_0>0$ be a small number. Assume $\Omega_i=\{(x,y)\in (0,\delta_0)^2 \l| x^{M_i} \lea y \lea x^{M_{i-1}} \r. \}$ is a curved triangle domain such that both $p_{i-1}$ and $p_i$ have no real roots in $\Omega_i$. Of course, the implicit constants, appearing in the definition of $\Omega_i$, are appropriately chosen.

By the symmetry of $x$ and $y$ in $\Lambda_{\lambda}$, we can further assume that $M_{i-1}\geq 1$. In fact, by a suitable partition of unity, we can restrict the integration of $\Lambda_{\lambda}$ in the domain
\begin{equation}\label{domain y less than x}
\Omega=\Big\{(x,y):x>0,~~0<y<Cx\Big\}
\end{equation}
with some positive constant $C$.

Choose a smooth function $\phi \in C_0^\infty (\frac12, 2)$ such that $\sum_{j\in \bZ} \phi(\frac{x}{2^j})=1 $ for all $x>0$. Let $\Lambda_{j,k}$ be defined as $\La$ in (\ref{def}), but with insertion of $\phi(\frac{x}{2^j})\phi(\frac{y}{2^k})$ into the cut-off function. Since our analysis is restricted to $\Omega_i$, we must have $M_i j +O(1) \le k \le M_{i-1} j + O(1)$, where $O(1)$ is a bounded constant independent of $j$ and $k$. By Lemma \ref{oscillatory estimation}, we obtain
$$ |\Lambda_{j,k}(f_1,f_2,f_3)| \le C \l(|\lambda| 2^{j A_i} 2^{k B_i}  \r)^{-\frac16} \|f_1\|_2 \|f_2\|_2 \|f_3\|_2 .$$
The size estimate in Lemma \ref{size estimation} gives
 $$ |\Lambda_{j,k}(f_1,f_2,f_3)| \le C \min\{ 2^{\frac{j}{2} },~ 2^{\frac{k}{2}} \} \|f_1\|_2 \|f_2\|_2 \|f_3\|_2 .$$

\noindent It follows from our assumption $M_i j +O(1) \le k \le M_{i-1} j + O(1)$ that $2^{j} \gea 2^{k}$. Hence we shall take $2^{\frac{k}{2}}$ as our upper bounds for size estimates of $\Lambda_{j,k}$. For clarity, we divide our proof into three cases.

\noindent{\bf Case (i) $2^{ A_i j + (B_i+3) M_{i-1} j} \lea |\lambda|^{-1}$}

\noindent In this case, we choose the size estimate for $\Lambda_{j,k}$. Then
\begin{eqnarray*}
  \sum_j \sum_{M_i j + O(1)\le k \le M_{i-1} j + O(1)} \|\Lambda_{j,k} \|
 &\lea & \sum_j \sum_{ k \le M_{i-1} j } 2^{\frac{k}{2}}\\
 &\lea & \sum_{j}  2^{\frac{M_{i-1}j}{2}} \\
 &\lea & |\lambda|^{-\frac{M_{i-1}}{2[ A_i+(B_i+3)M_{i-1}]}}.
\end{eqnarray*}

\noindent{\bf Case (ii) $2^{ A_i j+(B_i+3) M_{i} j} \gea |\lambda|^{-1}$}

\noindent The oscillation estimate is better than the size estimate.
This implies
\begin{eqnarray*}
 \sum_j \sum_{k} \|\Lambda_{j,k}\|
&\lea & \sum_j \sum_{M_i j \le k \le M_{i-1} j} \l( |\lambda| 2^{j A_i} 2^{k B_i}  \r)^{-\frac16} \\
&\lea & \sum_j \l( |\lambda| 2^{j A_i} 2^{j M_i B_i}  \r)^{-\frac16}\\
&\lea & |\lambda|^{-\frac{M_i}{2( A_i+ (B_i +3)M_i)}}.
\end{eqnarray*}

\noindent {\bf Case (iii) $2^{ A_ij+(B_i+3) M_{i} j} \lea |\lambda|^{-1}
\lea 2^{A_i j+(B_i+3) M_{i-1} j}$}

\noindent We denoted by $k_j$ the solution of
$ 2^{ A_i j+(B_i+3) k_j} = |\lambda|^{-1}.$ Then it is true that
\begin{equation*}
\sum_j \sum_{M_i j+ O(1) \le k \le k_j} \|\Lambda_{j,k}\|
\lea \sum_j  \sum_{M_i j+ O(1) \le k \le k_j} 2^{\frac{k}{2}}
\lea \sum_j 2^{\frac{k_j}{2}}.
\end{equation*}
On the other hand, for $k_j< k \le M_{i-1}j + O(1)$, we have
$$
\sum_j \sum_{k_j< k \le M_{i-1}j + O(1)} \|\Lambda_{j,k}\|
\lea \sum_j \sum_{M_i j+ O(1) \le k \le k_j}
\l(|\lambda| 2^{j A_i} 2^{k B_i} \r)^{-\frac{1}{6}}
\lea \sum_j 2^{\frac{k_j}{2}}.
$$
Let $j_1$ be the solution of $2^{A_i j_1+(B_i+3) M_{i-1} j_1}=|\lambda|^{-1}$. Then it follows that
$$\sum_{j\geq j_1} 2^{\frac{k_j}{2}}
\lea  \sum_j \l( |\lambda|^{}  2^{A_i j}  \r)^{-\frac{1}{2(B_i+3)}}
\lea  |\lambda|^{-\frac{M_{i-1}}{2(A_i+ (B_i+3)M_{i-1})}  }.
$$

We shall point out that the order $d$ in (\ref{def of d}) satisfies
$ d \geq \min\{A_i + M_i B_i,\frac{A_i}{M_i}+B_i\}$. This inequality is also true if $M_i$ is replaced by $M_{i-1}$. Hence the decay estimate in Theorem \ref{Th-main} is true if we restrict the integration of $\La$ over $\Omega_i$.

\subsection{Estimates near the singularity}\label{completion of the proof}
In this section, by Greenblatt's method of resolution of singularities in
\S \ref{Resolution of singularities}, $L^2$ decay estimates near the singularities of $H$ will be proved. Without loss of generality, we focus our attention in the first quadrant.

As in \S \ref{Resolution of singularities}, let $\Gamma_i$ denotes the compact face of $N(H)$ joining $(A_i,B_i)$ and $(A_{i+1},B_{i+1})$, and $p_i(t)=\sum_{(\alpha,\beta)\in\Gamma_i}C_{\alpha,\beta}t^{\beta}$. Recall that $-\frac{1}{M_i}$ is the slope of the straight line through $\Gamma_i$. Our argument is quite different depending on whether $M_i=1$ or not. Indeed, in the original coordinates, one is able to prove the sharp decay estimate in Theorem \ref{Th-main} only for $M_i=1$. However, the argument breaks down for $M_i\neq 1$. The reason is that a direct resolution of singularities does not exploit fully the convolution structure of $\Lambda_{\lambda}$, and that a logarithmic term $\log|\lambda|$ will appear in our estimate. Hence we cannot obtain the desired decay estimate in this way. For $M_i\neq 1$, it is necessary to make changes of variables before our application of resolution of singularities.

We begin with the case $M_i=1$. By insertion of a cut-off function, we define a trilinear operator $T(f_1,f_2,f_3)$ by
\begin{equation}\label{trilnear T}
 T(f_1,f_2,f_3)=\int \int e^{i\lambda S(x,y)}f_1(x)f_2(y)f_3(x+y) \varphi(x,y) \phi\l( \dfrac{y-\eta x }{\rho x}\r) \chi_{\{x,y>0\}}(x,y) \dif x \dif y ,
\end{equation}
where $\eta$ is a positive real zero of $p_i(t)$, $\rho$ is a small positive number, and $\chi_{\{x,y>0\}}$ is the characteristic function of the first quadrant. Let
$$ U= \{(x,y):x>0,~ (\eta-\rho/2)x<y<(\eta+2\rho)x\}.$$
It is easy to see that the cut-off function in $T(f_1,f_2,f_3)$ is compactly supported in $U$.

For a negative integer $j$, $|j|\gg 1$, we define
$$T_{j}(f_1,f_2,f_3)= \int \int_{\bR^2} e^{i\lambda S(x,y)}f_1(x)f_2(y)f_3(x+y) \varphi(x,y) \psi \l( \dfrac{y-\eta x }{\rho x}\r) \chi_{\{x,y>0\}} \psi_{j}(x) \dif x \dif y,      $$
where $\psi_j(x)=\phi(\frac{x}{2^j})$. Set
$$ U_j= \{(x,y):x\sim 2^j,~(\eta-\rho/2)x<y<(\eta+2\rho)x  \}.$$
This definition is slightly different from (\ref{def Uj}) in \S\ref{Resolution of singularities}.

As in \S\ref{Resolution of singularities}, by resolution of singularities for $H$ in $U_j$, we will obtain a collection of rectangles with dimensions $\mu 2^{m+j}$ by $\mu 2^{n+j}$, denoted by $\cG$. Here $\mu>0$ is a small fixed dyadic number, and $m,n$ are negative integers. Choose a nonnegative $C^\infty$ function $\tau(x)$ supported on $[-\frac{1}{2}-\frac{\ep}{2},\frac{1}{2}+\frac{\ep}{2}]$ such that $\tau(x)=1$ on $[-\frac{1}{2},\frac{1}{2}]$. Here $\ep=\frac{1}{2}\ep_0$, and $\ep_0$ is chosen as in Lemma \ref{5-2}. Assume $R$ is a $l_h(R)$ by $l_v(R)$ rectangle in $\cG$, and $(x_0,y_0)$ is its center.
Let $\tau_R(x,y)$ be defined by
\[ \tau_R(x,y)=\tau\l(\dfrac{x-x_0}{l_h(R)}\r)\tau\l(\dfrac{y-y_0}{l_v(R)}\r) .\]
By Lemma \ref{5-3}, we see that each $\xi_R(x,y)$ defined by
\[\xi_R(x,y)=\dfrac{\tau_R(x,y)}{\sum_{R\in \cG}\tau_R(x,y)},~R\in\cG,\]
is a smooth cut-off function in $C_0^{\infty}(R^{\ast,1+\epsilon_0})$. Moreover, $\{\xi_R:R\in\cG\}$ forms a partition of unity of $\{(x,y)\in U_j:
H(x,y)\neq 0\}$. Define a new cut-off function by
$$ \phi_{j,R}=\varphi(x,y) \psi \l( \dfrac{y-\eta x }{\rho x}\r) \chi_{x>0}(x,y) \psi_{j}(x) \xi_R(x,y).$$
Now we have the decomposition of $T_{j}=\sum_{R\in \cG}T_{j,R}$, where
\[ T_{j,R}(f_1,f_2,f_3)=\int \int e^{i\lambda S(x,y)}f_1(x)f_2(y)f_3(x+y) \phi_{j,R}(x,y) \dif x \dif y. \]

Using Lemma \ref{Bernstein-type inequality}, Theorems \ref{5-1}, \ref{5-2} and \ref{5-3}, we are able to prove the following upper bounds for $\phi_{j,R}$ and its partial derivatives; see also Lemma 4.8 in \cite{greenblatt2}.

\begin{lem}\label{lem6-1}
Suppose $\phi_{j,R}=\phi(x,y)\psi_j(x)\xi_R(x,y)$ for some $R\in \cG$ which is of dimensions $r_1$ by $r_2$. For $\alpha=0,1,2$, there is a constant $C>0$ such that
\[   \l|\dfrac{\pa^\alpha \phi_{j,R}}{\pa x^\alpha}(x,y) \r|< C r_1^{-\alpha}, ~~~~~\l|\dfrac{\pa^\alpha \phi_{j,R}}{\pa y^\alpha}(x,y) \r|< C r_2^{-\alpha} .\]
In addition, there exist a constant $W_{j,R}>0$ and a constant $\delta>0$ such that on $R$ we have
\[ \delta W_{j,R}  < |H(x,y)|< W_{j,R} .\]
Also, there exist $\delta\p, C\p >0$ such that we have
\begin{equation}\label{eq6-1}
  \delta\p W_{j,R}< r_1 \sup\limits_{(x,y)\in R} \l| \dfrac{\pa H}{\pa x}(x,y) \r|, ~~r_2 \sup\limits_{(x,y)\in R} \l| \dfrac{\pa H}{\pa y}(x,y) \r|;
\end{equation}
\[ r_1 \sup\limits_{(x,y)\in R} \l| \dfrac{\pa H}{\pa x}(x,y) \r|,~~ r_1^2 \sup\limits_{(x,y)\in R} \l| \dfrac{\pa^2 H}{\pa x^2}(x,y) \r|<C\p W_{j,R} ;\]
\[ r_2 \sup\limits_{(x,y)\in R} \l| \dfrac{\pa H}{\pa y}(x,y) \r|, ~~r_2^2 \sup\limits_{(x,y)\in R} \l| \dfrac{\pa^2 H}{\pa y^2}(x,y) \r|<C\p W_{j,R} .\]
Moreover, all above estimates are also true with $R^{*,1+\ep}$ in place of $R$, where $\epsilon>0$ is sufficiently small.
\end{lem}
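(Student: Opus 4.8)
The plan is to derive all the stated bounds from the resolution‑of‑singularities data already assembled in \S\ref{Resolution of singularities}, namely Lemma \ref{Bernstein-type inequality} (the Bernstein inequality) together with Theorems \ref{5-1}, \ref{5-2}, and \ref{5-3}. The point of departure is that each $R\in\cG$ arises as $R=\alpha(R_0)$ for some $R_0\in\cF_\infty$, and that by construction of $\cF_\infty$ the square $R_0$ satisfies the reverse‑Bernstein inequality $l(R_0)\sup_{R_0}|\nabla H|>\delta_0\sup_{R_0}|H|$, while the maximality defining $\w R=\alpha(R_0)$ gives $(\ref{maximal interval-1})$ or $(\ref{maximal interval-2})$ on the enlarged rectangle. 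So I would set $W_{j,R}=\sup_{R}|H(x,y)|$ and prove each inequality in turn.

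\medskip

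\noindent\textbf{Step 1: the cut‑off derivative bounds.} The functions $\varphi$, $\psi\!\big(\tfrac{y-\eta x}{\rho x}\big)$, $\chi_{x>0}$ and $\psi_j(x)=\phi(x/2^j)$ are all constant or smooth in $y$, and in $x$ their derivatives of order $\alpha\le 2$ are $O(2^{-\alpha j})$ on $U_j$ (using $x\sim 2^j$ and, for the middle factor, that $y\sim x\sim 2^j$ there). Since $r_1=\mu 2^{m+j}\le \mu 2^{j}$ and similarly $r_2\le\mu 2^{j}$, these contributions are dominated by $r_1^{-\alpha}$ respectively $r_2^{-\alpha}$. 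For $\xi_R$: by definition $\xi_R=\tau_R/\sum_{R'}\tau_{R'}$, the numerator $\tau_R$ has $x$‑derivatives of order $\alpha$ bounded by $Cl_h(R)^{-\alpha}=Cr_1^{-\alpha}$ and $y$‑derivatives bounded by $Cr_2^{-\alpha}$ directly from the chain rule and boundedness of $\tau^{(\alpha)}$; the denominator is $\ge 1$ on the support of $\xi_R$ and, by the bounded‑overlap Theorem \ref{5-3} together with the comparability $l_h(R')\sim l_h(R)$, $l_v(R')\sim l_v(R)$ for the at most $N$ neighbours $R'$ that meet $R^{*,1+\ep}$ (a consequence of Theorems \ref{5-1}–\ref{5-2}), its derivatives obey the same bounds. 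The quotient rule then gives $|\pa_x^\alpha\xi_R|\le Cr_1^{-\alpha}$, $|\pa_y^\alpha\xi_R|\le Cr_2^{-\alpha}$, and the Leibniz rule combines everything into $|\pa_x^\alpha\phi_{j,R}|<Cr_1^{-\alpha}$, $|\pa_y^\alpha\phi_{j,R}|<Cr_2^{-\alpha}$.

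\medskip

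\noindent\textbf{Step 2: the two‑sided bound $\delta W_{j,R}<|H|<W_{j,R}$ on $R$.} The upper bound is the definition of $W_{j,R}$. For the lower bound: on the originating square $R_0$, the reverse‑Bernstein inequality from the stopping‑time construction forces $|H|$ not to oscillate—indeed $(\ref{stopping time})$ failed at every ancestor, which by Lemma \ref{Bernstein-type inequality-eq-3} (the sub‑rectangle comparability clause of the Bernstein lemma) and a telescoping argument keeps $\sup_{R_0}|H|\le C\inf_{R_0}|H|$; then on $\w R=\alpha(R_0)$ the maximality conditions $(\ref{maximal interval-1})$–$(\ref{maximal interval-2})$ say $|\w I|\sup_{\w R}|\pa_x H|\le\delta\sup_{\w R}|H|$ (resp. in $y$), so by the mean value theorem the oscillation of $H$ across $\w R$ is at most $2\delta\sup_{\w R}|H|$, whence $\inf_{\w R}|H|\ge(1-2\delta)\sup_{\w R}|H|$. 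Taking $\delta<\tfrac14$ gives $|H(x,y)|>\tfrac12\sup_R|H|=\tfrac12 W_{j,R}$, i.e. $\delta W_{j,R}<|H|$ with $\delta=\tfrac12$.

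\medskip

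\noindent\textbf{Step 3: the first‑ and second‑derivative bounds for $H$.} The upper bounds $r_1\sup_R|\pa_x H|,\;r_1^2\sup_R|\pa_x^2 H|<C'W_{j,R}$ and their $y$‑counterparts are exactly $(\ref{Bernstein-type inequality-eq-2})$ of Lemma \ref{Bernstein-type inequality} applied to $f=H$ with $(\alpha,\beta)\in\{(1,0),(2,0),(0,1),(0,2)\}$, valid because $R\subset U_j\subset U$ and, by Theorem \ref{5-1}, $r_1\le Cr_2^{\sigma}$, $r_2\le Cr_1^{\sigma}$, so the hypothesis $r_1\le Nr_2^{\delta}$, $r_2\le Nr_1^{\delta}$ of the Bernstein lemma holds with $\delta=\sigma$; the right‑hand side $\sup_R|H|$ is $W_{j,R}$. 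The lower bound $(\ref{eq6-1})$: on $R_0$ the reverse‑Bernstein inequality gives $l(R_0)\sup_{R_0}|\nabla H|>\delta_0\sup_{R_0}|H|$, so one of $(\ref{Bernstein-type inequality case 1})$, $(\ref{Bernstein-type inequality case 2})$ holds; if $(\ref{Bernstein-type inequality case 1})$ holds then $r_1\sup_R|\pa_x H|\ge l(R_0)\sup_{R_0}|\pa_x H|>\tfrac{\delta}{2}\sup_{R_0}|H|\gtrsim W_{j,R}$ by Step 2, and if instead $\w R=\w I\times J$ was enlarged only in $x$ then $(\ref{Bernstein-type inequality case 2})$ held on $R_0$ and the same computation in $y$ works; in either case at least one of the two quantities in $(\ref{eq6-1})$ dominates $\delta'W_{j,R}$, which is what is asserted. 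Finally, replacing $R$ by $R^{*,1+\ep}$ in all of the above is legitimate because Theorem \ref{5-2} gives $\sup_{R^{*,1+\ep}}|H|\le C_2\sup_R|H|$ and $C_1\sup_{R^{*,1+\ep}}|H|\le\inf_{R^{*,1+\ep}}|H|$ (so $W_{j,R}$ only changes by a bounded factor), and the Bernstein lemma applies verbatim to $R^{*,1+\ep}$ since its dimensions are still comparable to $r_1\times r_2$.

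\medskip

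\noindent\textbf{The main obstacle} I expect is Step 2, and more precisely the bookkeeping that shows $W_{j,R}=\sup_R|H|$ is the \emph{same} scalar (up to the universal factor $\delta$) that controls all the derivative bounds with the correct powers of $r_1,r_2$—this is where the interplay between the stopping‑time rule $(\ref{stopping time})$, the maximality defining $\alpha$, and Theorem \ref{5-2} has to be orchestrated carefully. Once $W_{j,R}$ is pinned down, everything else is an application of the Bernstein lemma with $\delta=\sigma$ from Theorem \ref{5-1}; the derivative bounds for the cut‑offs in Step 1 are routine. The argument is in all essentials that of Lemma 4.8 in \cite{greenblatt2}, the only novelty being the presence of the extra convolution‑adapted cut‑off $\psi\big(\tfrac{y-\eta x}{\rho x}\big)\psi_j(x)$, which is harmless since it is smooth with the right‑size derivatives on $U_j$.
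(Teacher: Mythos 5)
Your overall strategy—set $W_{j,R}=\sup_R|H|$ and deduce everything from Lemma \ref{Bernstein-type inequality} together with Theorems \ref{5-1}--\ref{5-3} and the stopping-time/enlargement construction—is the right one and matches the paper's intent (the paper itself gives no proof, deferring to Lemma 4.8 of \cite{greenblatt2}). Steps 1 and the upper bounds of Step 3 are fine, and Step 2, while somewhat roundabout (you re-derive what Theorem \ref{5-2} already states verbatim, and the appeal to ``failure at every ancestor plus telescoping'' is not really what drives the non-oscillation on $R_0$—it is the fact that (\ref{stopping time}) \emph{holds} on $R_0\in\cF_\infty$), does land on the correct conclusion.

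The genuine gap is in your treatment of (\ref{eq6-1}). You read it as asserting that \emph{at least one} of $r_1\sup_R|\pa_x H|$ and $r_2\sup_R|\pa_y H|$ exceeds $\delta'W_{j,R}$, and you explicitly say ``which is what is asserted.'' That reading is wrong: by parallel with the subsequent displays $r_1\sup_R|\pa_xH|,\ r_1^2\sup_R|\pa_x^2H|<C'W_{j,R}$ (which clearly mean both items are $<C'W_{j,R}$), inequality (\ref{eq6-1}) asserts \emph{both} lower bounds, and indeed the proof of Lemma \ref{lem6-2} uses the $x$-version to bound $m$ from below and the $y$-version to bound $n$ from below, so both are needed. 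Your argument shows the lower bound only in the direction where (\ref{Bernstein-type inequality case 1}) or (\ref{Bernstein-type inequality case 2}) held on the originating square $R_0$. The lower bound in the \emph{expanded} direction comes from the maximality of $\w I$ (or $\w J$): if $\w I$ could have been doubled while remaining inside $U_j^*$, the doubled rectangle would violate (\ref{maximal interval-1}), and applying (\ref{Bernstein-type inequality-eq-3}) and (\ref{Bernstein-type inequality-eq-2}) to transfer suprema between the doubled rectangle and $\w R$ gives $r_1\sup_{\w R}|\pa_xH|\gtrsim W_{j,R}$; the boundary case where doubling leaves $U_j^*$ needs a separate (short) remark. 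You mention the maximality of $\alpha(R_0)$ in your preamble but never deploy it in Step 3 for this purpose, so as written your proof establishes only half of (\ref{eq6-1}).
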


Now we get to estimate $\|T\|$. By the almost orthogonality principle in Lemma \ref{almost orthogonality}, it follows from $T=\sum_j T_j$ that $\|T\| \lea \sup_j \|T_j\|$. Hence it suffices to estimate $\|T_j\|$ for fixed integer $j$. By our assumption $M_i=1$, one has $d_i=A_i+M_iB_i=A_i+B_i$ for each $(A_i,B_i)\in \Gamma_i$. There is a constant $C>0$ such that
\begin{equation}\label{eq6-3}
\sup_R |H(x,y)|< C 2^{d_ij} .
\end{equation}
Let $r\ge 1$ be the order of the zero $\eta$ of $p_i(t)$. This implies that $p_i(\eta)=\cdots=p_i^{(r-1)}(\eta)=0$ and $p_i^{(r)}(\eta)\neq0$. For $k,m,n \in \bZ_{<0}$, we define $\cG(k,m,n) \subseteq \cG $ as in Theorem \ref{5-4} in \S\ref{Resolution of singularities}. Then we have the following lemma; see also \cite{greenblatt2}.
\begin{lem} \label{lem6-2}
Suppose $T_{j,R}\in \cG(k,m,n)$. For some constants $c_1$ and $c_2$ we have
\[  c_1+k<m<c_2+\frac{k}{r}, ~~c_1+k<n<c_2+\frac{k}{r}. \]
\end{lem}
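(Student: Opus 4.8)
The plan is to translate the geometry of the Newton polyhedron of $H$, together with the order of the root $\eta$ of the edge polynomial $p_i$, into the size bounds on the rectangle $R \in \cG(k,m,n)$. Recall that $R \subseteq U_j \subseteq U_j^\ast$, so $R = I \times J$ with $I \subseteq \{x \sim 2^j\}$ and $J$ an interval inside $\{(\eta - 2\rho)x < y < (\eta + 2\rho)x\}$; writing $|I| = \mu 2^{m+j}$ and $|J| = \mu 2^{n+j}$ as in the definition of $\cG(k,m,n)$, the containment $R \subseteq U_j^\ast$ already forces $m, n < O(1)$, which gives the upper half of each inequality with the exponent $1$ in place of $1/r$ as a first crude bound; the real content is (a) the lower bounds $c_1 + k < m$, $c_1 + k < n$, and (b) the sharper upper bounds with $k/r$. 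First I would write $H$ near the curve $y = \eta x$ in the coordinates adapted to $\Gamma_i$: since $M_i = 1$, on $U_j$ we have $H(x,y) = x^{d_i}\bigl(p_i(y/x) + o(1)\bigr)$ as $x \to 0$, uniformly, and because $\eta$ is a zero of $p_i$ of order exactly $r$, Taylor expansion gives $p_i(t) = \frac{p_i^{(r)}(\eta)}{r!}(t-\eta)^r + O((t-\eta)^{r+1})$ for $t$ near $\eta$. Consequently, on a point $(x,y)$ with $x \sim 2^j$ and $|y/x - \eta| \sim 2^{n}$ (which is the order of magnitude of the $y$-oscillation on $R$ relative to the base point, after dividing by $x$), one has $|H(x,y)| \sim 2^{d_i j} 2^{rn}$, provided $2^{n}$ dominates the size of the ambient error terms.

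The next step is to feed this into condition (ii) defining $\cG(k,m,n)$, namely $2^{k + d_i j - 1} \le \sup_R |H| < 2^{k + d_i j}$. Combining with the estimate $\sup_R |H| \sim 2^{d_i j}\bigl(2^{rn} + (\text{contribution of the }x\text{-variation})\bigr)$, and using Lemma \ref{lem6-1} (which says $H$ varies by at most a bounded factor across $R$ and that $r_1 \sup_R|\partial_x H|$, $r_2\sup_R|\partial_y H|$ are comparable to $W_{j,R} = \sup_R|H|$), I would extract two things simultaneously: from the $y$-direction, $2^{rn} \lesssim 2^{k}$, i.e. $n < \frac{k}{r} + c_2$; and from the $x$-direction — here one uses that $M_i = 1$ so that moving $x$ by $r_1 = \mu 2^{m+j}$ changes $y/x$ by roughly $2^{m}$ and hence (again by the order-$r$ vanishing, since on $R$ the quantity $y/x$ stays within $O(2^n)$ of $\eta$) changes $H$ by a factor controlled by $\max(2^m,2^n)^{r}/2^{rn}$ times $\sup_R|H|$ — one gets $2^{rm} \lesssim 2^k$ as well, giving $m < \frac{k}{r} + c_2$. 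For the lower bounds, I would invoke the stopping-time / Bernstein mechanism: by \eqref{eq6-1}, $\delta' W_{j,R} < r_1 \sup_R|\partial_x H|$ and $\delta' W_{j,R} < r_2 \sup_R |\partial_y H|$, and since $\partial_x H, \partial_y H$ are bounded on the fixed compact neighborhood, $W_{j,R} \lesssim r_1 = \mu 2^{m+j}$ and $W_{j,R} \lesssim \mu 2^{n+j}$; but $W_{j,R} = \sup_R|H| \ge 2^{k+d_i j -1}$, and since $d_i \ge 1$ (indeed $d_i = A_i + B_i$ with $A_i,B_i \ge 0$, and $d_i\ge 1$ because $H(0,0)=0$), comparing $2^{k + d_i j} \lesssim 2^{m+j}$ and $2^{k+d_i j}\lesssim 2^{n+j}$ yields $m > k + (d_i - 1)j + c_1 \ge k + c_1$ (using $j < 0$, $d_i \ge 1$) and likewise $n > k + c_1$, after absorbing the dependence on $d_i$ and $j$ into the constants exactly as in the cited argument of \cite{greenblatt2}.

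The main obstacle is the $x$-direction estimate for both the upper bound $m < k/r + c_2$ and the interplay between the two directions: it is not a priori obvious that a horizontal perturbation of size $r_1$ changes $H$ by the "expected" amount, because $H$ is only smooth, not a polynomial, and the edge-polynomial approximation $H(x,y) \approx x^{d_i} p_i(y/x)$ carries an $o(x^{d_i})$ error whose derivatives must also be controlled. This is precisely where $M_i = 1$ is used decisively — it makes the curve $y = \eta x$ a straight line through the origin, so the rescaling $(x,y) \mapsto (x, y/x)$ is a bounded-distortion change of variables on $U_j$ and the Bernstein inequality Lemma \ref{Bernstein-type inequality} applies to $H$ on $R$ (and on $R^{\ast,1+\ep}$) with uniform constants, letting us replace the heuristic "$|H| \sim 2^{d_i j} 2^{rn}$, variation $\sim 2^{d_i j}2^{rm}$" by rigorous two-sided bounds. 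I would therefore organize the proof as: (1) set up the adapted coordinates and record the edge-polynomial approximation with derivative control; (2) derive $\sup_R|H| \sim 2^{d_i j}\max(2^{rm}, 2^{rn})$ from Lemma \ref{lem6-1} and the order-$r$ vanishing of $p_i$ at $\eta$; (3) combine with condition (ii) to get the four inequalities, tracking that all implicit constants depend only on $\deg S$, $\eta$, $\rho$, $\mu$, and the fixed neighborhood — hence are uniform in $j$, $k$, $m$, $n$.
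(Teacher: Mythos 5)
Your proposed proof has a genuine gap in the lower bound $m > c_1 + k$. You argue that since $\partial_x H$ is ``bounded on the fixed compact neighborhood,'' the Bernstein-type inequality \eqref{eq6-1} gives $W_{j,R} \lesssim r_1 = \mu 2^{m+j}$, and then compare with $W_{j,R} \sim 2^{k+d_i j}$ to get $m \gtrsim k + (d_i - 1)j$. But you then claim $k + (d_i-1)j + c_1 \ge k + c_1$; with $j<0$ this needs $d_i - 1 \le 0$, i.e.\ $d_i = 1$, whereas in general $d_i$ is an arbitrary positive integer and $(d_i - 1)j$ can be an arbitrarily large negative number. The inequality chain is simply false for $d_i > 1$. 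The fix, which is exactly what the paper's proof supplies, is to replace the crude bound $\sup_R|\partial_x H| \lesssim 1$ by the scaling bound $\sup_R |\partial_x H| \lesssim 2^{(d_i-1)j}$ (valid on $U_j^*$ because the Newton polyhedron of $\partial_x H$ is the $x$-translate of $N(H)$ by one unit and $y/x$ is bounded there). With this bound, $2^{m+j}\sup_R|\partial_x H| \gtrsim \sup_R|H| \sim 2^{k+d_i j}$ gives $2^{m+d_i j} \gtrsim 2^{k+d_i j}$, i.e.\ $m > c_1 + k$, with no leftover $j$-dependence.

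For the upper bound $m < c_2 + k/r$, your route through the edge-polynomial approximation $H \approx x^{d_i}p_i(y/x)$ and its error control is plausible but remains heuristic in the proposal; it would require uniform two-sided control of the $o(x^{d_i})$ error and its $x$-derivatives on each $R$, which you flag as ``the main obstacle'' without resolving. The paper avoids this entirely by invoking Lemma \ref{sup estimate} (Christ's one-dimensional sublevel bound): from the lower bound $\inf_R|\partial_x^r H| \gtrsim 2^{(d_i-r)j}$ (again from Newton-polyhedron scaling) one gets directly $\sup_R|H| \gtrsim 2^{r(m+j)}\cdot 2^{(d_i-r)j} = 2^{rm + d_i j}$, and comparison with $\sup_R|H| \sim 2^{k+d_i j}$ yields $rm \lesssim k$. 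This is both more elementary and sidesteps the smooth-vs-analytic issue you worried about. I recommend adopting the two scaling bounds $\sup_R|\partial_x H| \lesssim 2^{(d_i-1)j}$ and $\inf_R|\partial_x^r H| \gtrsim 2^{(d_i-r)j}$ as the organizing ingredients; once these are in place the lemma follows from \eqref{eq6-1} and Lemma \ref{sup estimate} in a few lines.
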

\begin{proof}
We prove only the inequality for $m$. Observe that $H$ satisfies
$\sup_R \l| \partial_x H(x,y) \r| \lea 2^{d_ij-j}. $
By Lemma \ref{lem6-1}, we have
$$ 2^{m+j} \sup_R \l| \dfrac{\pa H}{\pa x }(x,y) \r|  \gea \sup_R |H(x,y)| \sim 2^{k+jd_i}.$$
Hence there exists a constant $c_1$ such that $m\geq c_1+k.$ On the other hand,
$$ \inf_R\l| \dfrac{\pa^r H}{\pa x ^r}(x,y) \r|\gea 2^{-jr+jd_i}.$$
By Lemma \ref{sup estimate}, we get
$$ \sup_R |H(x,y)| \gea 2^{r(m+j)} \inf_R \l|\dfrac{\pa^r H}{\pa x ^r}(x,y) \r| \gea 2^{mr+jd_i}.$$
Combining with $  \sup_R |H(x,y)| \sim 2^{k + jd_i},$
we see that $ m\leq c_2+\frac{k}{r}$ for some constant $c_2$.
\end{proof}

Observe that $T_j=\sum_{k\leq 0} \sum_{c_1+k<m,n<c_2+k/r} \sum_{R \in \cG(k,m,n)} T_{j,R}$. In view of the orthogonality property in Theorem \ref{5-4}, we see that
\begin{equation}\label{eq6-4}
\|T_j\| \lea \sum_{k\leq 0} \sum_{c_1+k<m,n<c_2+\frac{k}r} \sup_{R \in \cG(l,m,n)} \|T_{j,R}\|.
\end{equation}

\noindent We use Lemmas \ref{size estimation} and \ref{oscillatory estimation} to bound $\|T_{j,R}\|$. More precisely, by the size estimate in Lemma \ref{size estimation}, one has
$$\|T_{j,R}\|\lea \min(2^{\frac{m+j}{2}},2^{\frac{n+j}{2}}).$$
On the other hand, the oscillation estimate in Lemma \ref{oscillatory estimation} gives us the following bound,
$$\|T_{j,R}\|\lea |\lambda|^{-\frac{1}{6}} 2^{-\frac{k+jd_i}{6}}.$$
By (\ref{eq6-4}), it follows immediately that
\begin{equation}\label{eq6-5}
 \|T_j\|\lea \sum\limits_{k \leq 0}\sum\limits_{c_1+k<m,n<c_2+\frac{k}r} \min\l( 2^{\frac{m+j}{2}},2^{\frac{n+j}{2}},|\lambda|^{-\frac{1}{6}} 2^{-\frac{k+jd_i}{6}}  \r) .
\end{equation}
We fix a constant $c_3>\max(1, c_2-c_1)$. Observe that $c_1$ is a constant, so the right hand of (\ref{eq6-5}) is not greater than a constant multiple of
\begin{equation}\label{eq6-6}
\sum\limits_{k \leq 0}\sum\limits_{k<m,n<c_3+\frac{k}{r}} \min\l( 2^{\frac{m+j}{2}},2^{\frac{n+j}{2}},|\lambda|^{-\frac{1}{6}} 2^{-\frac{k+jd_i}{6}}  \r).
\end{equation}
Without loss of generality, we assume $|\lambda|\geq 2$. Then we consider the equation
\begin{equation}\label{eq6-7}
\frac{1}{2}\left(\frac{k^\prime}{r}+j^\prime \right)=-\frac{1}{6}\log_2|\lambda|-\frac{1}{6}\l(k^\prime+j^\prime d_i\r).
\end{equation}
Let $j_0$ be $j^\prime$ in (\ref{eq6-7}) when $k^\prime=0$. However, this solution $j_0$ is not necessarily an integer. The definition of $j_0$ implies $2^{j_0}=|\lambda|^{-1/(3+d_i)}$. Now we divide our argument into two cases $j\leq  j_0$ and $j> j_0$.

If $j \leq j_0$, we take the size estimate in (\ref{eq6-7}). It follows that
\begin{align}
\|T_j\| &\lea \sum\limits_{k \leq 0}\sum\limits_{k<m,n<c_3+\frac{k}{r}} \min\l( 2^{\frac{m+j}{2}},2^{\frac{n+j}{2}}\r) \notag\\
&\le \sum\limits_{k \leq 0}\sum\limits_{k<m,n<c_3+\frac{k}{r}} 2^{\frac{m+n+2j}{4}}  \notag\\
&\lea |\lambda|^{-\frac{1}{2(3+d_i)}}. \label{eq6-8}
\end{align}

Now we consider $j>j_0$. Let $k_j$ be the solution of $k^\prime$ in (\ref{eq6-7}) when $j^\prime=j$. For simplicity, we shall divide the summation in (\ref{eq6-6}) into the following two parts:
\begin{eqnarray*}
I_1&:=&\sum\limits_{k\geq k_j}\sum\limits_{k<m,n<c_3+\frac{k}{r}}
\min\l( 2^{\frac{m+j}{2}},2^{\frac{n+j}{2}},|\lambda|^{-\frac{1}{6}} 2^{-\frac{k+jd_i}{6}}  \r),\\
I_2&:=&\sum\limits_{k < k_j}\sum\limits_{k<m,n<c_3+\frac{k}{r}}
\min\l( 2^{\frac{m+j}{2}},2^{\frac{n+j}{2}},|\lambda|^{-\frac{1}{6}} 2^{-\frac{k+jd_i}{6}}  \r).
\end{eqnarray*}
For $I_2$, we apply the size estimate to obtain
\begin{eqnarray*}
I_2=\sum\limits_{k < k_j}\sum\limits_{k<m,n<c_3+\frac{k}{r}}
 2^{\frac{m+n+2j}{4}}
 \lea 2^{\frac{j}2+\frac{k_j}{2r}}.
\end{eqnarray*}
Note that $(k',j')=(j,k_j)$ is a solution of the equation (\ref{eq6-7}). This implies
$$\frac{r+3}{r}k_j+(d_i+3)j=-\log|\lambda|.$$
Hence
\begin{eqnarray*}
I_2&\lea& 2^{\frac{j}{2}-\frac{d_i+3}{2(r+3)}j} |\lambda|^{-\frac{1}{2(r+3)}}\\
&\lea& 2^{\frac{r-d_i}{2(r+3)}j} |\lambda|^{-\frac{1}{2(r+3)}}\\
&\lea& 2^{\frac{r-d_i}{2(r+3)}j_0} |\lambda|^{-\frac{1}{2(r+3)}}\\
&\lea& |\lambda|^{-\frac{1}{2(d_i+3)}},
\end{eqnarray*}
where we have used the facts $r\leq d$, $j<j_0$, and $2^{j_0}=|\lambda|^{-\frac{1}{(d_i+3)}}$.

Now we turn to $I_1$. Consider the following inequality:
\[ \frac{1}{4}(m+n+2j)\ge -\frac{1}{6}\log_2|\lambda|-\frac{1}{6}(k+jd_i).\]
Observe that the equation (\ref{eq6-7}) holds for $k^\prime=k_j, j^\prime=j$. By subtracting this equation from the above inequality,
we have
\[ \l(m-\frac{k}{r}\r)+ \l(n-\frac{k}{r}\r) \geq \l(\frac{2}{3}+\frac{2}{r}\r)(k_j-k) .\]
Let
$$ A=\l\{ (m,n)\in \bZ_{<0}^2 : m\leq \frac{k}{r},~n\leq \frac{k}{r} \quad \text{and} \quad \l(m-\frac{k}{r}\r)+ \l(n-\frac{k}{r}\r) \geq  \l(\frac{2}{3}+\frac{2}{r}\r)(k_j-k) \r\},$$
$$ B=\l\{ (m,n)\in \bZ_{<0}^2 : m\leq \frac{k}{r},~n\leq \frac{k}{r} \quad \text{and} \quad \l(m-\frac{k}{r}\r)+ \l(n-\frac{k}{r}\r) < \l(\frac{2}{3}+\frac{2}{r}\r)(k_j-k) \r\}.$$
It is easy to see that, the number of elements in $A$, denoted by $\# A$, is comparable to $(k-k_j)^2$. On the other hand, for $t<0$ which satisfies $t+\frac{2k}{r}\in \bZ_{<0}$, we define
$$ B_t=\l\{ (m,n)\in \bZ_{<0}^2 : m\leq \frac{k}{r},~n\leq \frac{k}{r} \quad \text{and} \quad \l(m-\frac{k}{r}\r)+ \l(n-\frac{k}{r}\r) =t \r\}.$$
It is clear that $ \#B_t \le |t|+1$ and
$$ B=\bigcup\limits_{t} B_t,$$
where the union is taken over all $t$ satisfying $t+\frac{2k}{r}\in \bZ_{<0}$ and $t\leq(\frac{2}{3}+\frac{2}{r})(k_j-k)$.
So, by calculation and definition of $k_j$, we have
\begin{eqnarray*}
I_1&=&\sum\limits_{k_j\leq k \leq 0}
\sum\limits_{k<m,n<c_3+\frac{k}{r}}\min\l( 2^{\frac{m+j}{2}},2^{\frac{n+j}{2}},|\lambda|^{-\frac{1}{6}} 2^{-\frac{k+jd_i}{6}} \r) \notag \\
&\le & \sum\limits_{ k \geq k_j}\sum\limits_{m,n\leq c_3+\frac{k}{r}}\min\l( 2^{\frac{m+n+2j}{4}}, |\lambda|^{-\frac{1}{6}} 2^{-\frac{k+jd_i}{6}}\r) \notag \\
&\le & \sum\limits_{ k \geq k_j}\sum\limits_{(m,n)\in A}|\lambda|^{-\frac{1}{6}} 2^{-\frac{k+jd_i}{6}}+
\sum\limits_{ k \geq k_j}\sum\limits_{(m,n)\in B } 2^{\frac{m+n+2j}{4}}\notag :=I_{1,1}+I_{1,2}. \label{eq6-10}
\end{eqnarray*}
For $I_{1,1}$, it follows from $\# A\lea 1+(k-k_j)^2$ that
\begin{eqnarray*}
I_{1,1}
&\lea & \sum\limits_{ k \geq k_j} (1+(k_j-k)^2 )|\lambda|^{-\frac{1}{6}} 2^{-\frac{k+jd_i}{6}}\\
&\lea & |\lambda|^{-\frac{1}{6}} 2^{-\frac{k_j+jd_i}{6}}\\
&=& 2^{\frac{\frac{k_j}{r}+j}{2}},
\end{eqnarray*}
where the last equality is true since $(k',j')=(k_j,j)$ is a solution to (\ref{eq6-7}). For $I_{2,2}$, we have
\begin{eqnarray*}
I_{1,2} &\lea & \sum\limits_{ k \geq k_j} \sum\limits_{t+\frac{2k}{r}\in \bZ_{<0}, t\leq (\frac{2}{3}+\frac{2}{r})(k_j-k)}  \sum\limits_{(m,n)\in B_t } 2^{\frac{m+n+2j}{4}} \\
&\lea & \sum\limits_{ k \geq k_j} \sum\limits_{t+\frac{2k}{r}\in \bZ_{<0}, t\leq (\frac{2}{3}+\frac{2}{r})(k_j-k)}  (|t|+1) 2^{\frac{t+\frac{2k}{r}+2j}{4}}\\
&\lea & 2^{\frac{\frac{k_j}{r}+j}{2}}.
\end{eqnarray*}
By the same argument as in the estimate of $I_2$, we see that $I_1$ is bounded by a constant multiple of $|\lambda|^{-\frac{1}{2(d_i+3)}}$.\\

Now we turn to the case $M_i>1$. We have pointed out that the desired decay estimate cannot be obtained by resolution of singularities in the original coordinates $x,y$. For this reason, we shall make a change of variables in the following domain:
\begin{equation}\label{Domain Omega eps}
\Omega_{\epsilon}=\Big\{(x,y):x>0,~|y|\leq 2\epsilon x\Big\}
\end{equation}
where $\epsilon>0$ is a suitably small number.

If there exists some $M_i=1$, then we choose a small number $\epsilon>0$ such that $p_i(t)$, defined by (\ref{homogeneous polynomial}), has no non-zero roots in $(-2\epsilon,2\epsilon)$. Otherwise let $\epsilon=1$ if there is no $M_i=1$. Choose a smooth function $a(x)\in C_0^{\infty}(-2,2)$ such that $a(x)=1$ on $[-1,1]$. First, we insert $a\l(\frac{y}{\epsilon x}\r)\chi_{\{x>0\}}$ into the cut-off function of $\Lambda_{\lambda}$. We shall make the following changes of variables:
\begin{equation}\label{changes of variabels}
u=x,~~~v=x+y.
\end{equation}
It is easy to see that $\widetilde{H}(u,v)=H(x,y)$ has the same generalized order $\widetilde{d}$ as $H$, i.e., $\widetilde{d}=d$.  The trilinear oscillatory integral $\Lambda_{\lambda}$ becomes
\begin{equation}
\widetilde{{\Lambda}}_\lambda(f_1,f_2,f_3)=\iint_{\bR^2} e^{i\lambda S(u,v-u)}f_1(u)f_2(v-u)f_3(v) \varphi(u,v-u)a\l( \frac{v-u}{\epsilon u} \r)\chi_{\{u>0\}}\dif u \dif v .
\end{equation}
This implies that $\widetilde{{\Lambda}}_{\lambda}$ is also a trilinear oscillatory integral of convolution type. On the other hand, the curved box $\Omega_{\epsilon}$ in (\ref{Domain Omega eps}) becomes
\begin{equation}
\widetilde{ \Omega_{\epsilon}}= \Big\{(u,v): u>0,~ u-2\epsilon u<v<u+2\epsilon u \Big\}.
\end{equation}
If we apply the method of resolution of singularities for $\widetilde{H}(u,v)$, then it is suffices to consider the corresponding compact face of $N(\widetilde{H})$ with slope $-1$. Hence our above argument works.

Combining all above results, we have completed the proof of Theorem \ref{Th-main}.\\

\begin{remark}
The decay estimate in Theorem \ref{Th-main} and the Newton polyhedron of $H$ are unrelated. In fact, during our proof, it suffices to treat only the compact face of $N(H)$ with slope $-1$.
\end{remark}

\section{$L^2$ uniform estimates for trilinear oscillatory integrals}
In this section, we shall establish $L^2$ uniform estimates for the trilinear oscillatory integrals $\Lambda_{\lambda}(f,g,h)$ with polynomial phases. The resulting $L^2$ estimate will be a consequence of an operator van der Corput lemma and uniform estimates for a class of sublevel set operators. We begin with a useful decomposition of an algebraic domain, due to Phong, Stein, and Sturm \cite{PSS2001}, which will play an important role in our argument.

\begin{defn}\label{curved trapezoid}
Assume $a$ and $b$ are real numbers, $a<b$. Suppose $g$ and $h$ are continuous monotone functions on $[a,b]$, and $g(x)<h(x)$ for all $a<x<b$. Then the domain
$$ \Omega=\l\{ (x,y)\in \bR^2: a<x<b, g(x)<y<h(x) \r\}$$
is called a curved trapezoid.
\end{defn}

\begin{defn}\label{algebraic domain}
We say that $D\subseteq U=[0,1]^d$ is a simple algebraic domain of type $(r,n)$ if $D$ can be written as
$$ D=\l\{ (x_1, \cdots, x_d)\in U \l| P_k(x)\ge \lambda_k, 1\le k \le r\p \r. \r\},$$
where $P_k\in \bR[x_1, \cdots, x_d]$ are real polynomials, $\lambda_k\in \bR, 1\le r\p \le r$, and $\deg P_k \le n$.

We say that $D$ is an algebraic domain of type $(r, n, d, \omega)$ if $ D=\bigcup_{i=1}^{\omega\p}D_i$ with $\omega\p \le \omega$, where each $D_i$ is a simple algebraic domain of type $(r,n)$.
\end{defn}

Phong, Stein, and Sturm proved the following theorem for algebraic domains. Roughly speaking, any algebraic domain can be decomposed into finitely many disjoint curved trapezoids, up to a set of measure zero.

\begin{lem}\label{decomposition of algebraic domains}
{\rm (\cite{PSS2001}~)}
Let $D$ be an algebraic domain of type $(r, n, 2, \omega)$. Then there exists finitely many curved trapezoids $\Omega_1, \Omega_2, \cdots, \Omega_M$, and a set $Z$ of measure zero, such that
$$ D=\l(\bigcup_{i=1}^M \Omega_i \r) \bigcup Z.$$
Moreover, the number $M$ is bounded in terms of $r, n, \omega$.
\end{lem}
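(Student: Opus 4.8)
The plan is to reduce the two-dimensional statement to a one-dimensional fact about sign patterns of polynomials, exploiting the special product structure $U=[0,1]^2$. First I would treat the case of a \emph{simple} algebraic domain $D=\{(x,y)\in[0,1]^2 : P_k(x,y)\ge\lambda_k,\ 1\le k\le r'\}$; the general case follows by taking the union over the $\omega'\le\omega$ pieces, which only multiplies the final count of trapezoids by $\omega$. For a simple domain, fix $x\in(0,1)$ and consider the slice $D_x=\{y\in[0,1]: P_k(x,y)\ge\lambda_k\}$. Each $P_k(x,\cdot)-\lambda_k$ is a polynomial in $y$ of degree $\le n$, so it has at most $n$ real roots; hence the slice $D_x$ is a union of at most $\lceil (nr'+1)/2\rceil$ subintervals of $[0,1]$, a bound depending only on $n$ and $r$. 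The key point is to organize how these interval endpoints vary with $x$.

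The second step is a decomposition of the $x$-axis. Let $Q(x)$ be the product of the leading coefficients (in $y$) of the $P_k(x,y)-\lambda_k$, of all the resultants $\mathrm{Res}_y(P_i-\lambda_i,\ P_j-\lambda_j)$, of each $\mathrm{Res}_y(P_k-\lambda_k,\ \partial_y P_k)$ (the discriminant-type factor), and of the boundary quantities $P_k(x,0)-\lambda_k$ and $P_k(x,1)-\lambda_k$. Then $Q$ is a nonzero polynomial (after discarding identically-zero factors; if some $P_k-\lambda_k$ vanishes identically in $y$ the constraint is vacuous or empty and can be dropped or handled trivially) whose degree is bounded in terms of $n$ and $r$. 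On each open interval $(a,b)$ between consecutive real zeros of $Q$ in $[0,1]$, the number of roots of each $P_k(x,\cdot)-\lambda_k$ in $(0,1)$ is constant, no two such roots collide, and none crosses $y=0$ or $y=1$; by continuity and the implicit function theorem each such root is a continuous (indeed real-analytic) function $y=\gamma_\ell(x)$ on $(a,b)$, and on this interval the slice $D_x$ is a fixed union of intervals whose endpoints are among $0,1,\gamma_1(x),\dots,\gamma_p(x)$.

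The third step is to make the endpoint functions monotone. On $(a,b)$, each $\gamma_\ell$ is continuous; partition $(a,b)$ further at the (finitely many, degree-bounded) zeros of $\gamma_\ell'(x)$ — equivalently at zeros of $\partial_x P_k + \partial_y P_k\cdot\gamma_\ell' = 0$ combined via resultants so as to stay polynomial in $x$ — so that on each resulting subinterval every $\gamma_\ell$ is monotone. Over such a subinterval $(a',b')$ the set $\{(x,y): a'<x<b',\ \gamma_\ell(x)<y<\gamma_{\ell+1}(x)\}$ for each consecutive pair appearing in the description of $D_x$ is exactly a curved trapezoid in the sense of Definition 3.2, and $D\cap\big((a',b')\times[0,1]\big)$ is the disjoint union of finitely many of these. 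Collecting over all subintervals, over the finitely many zeros of $Q$ and of the derivative resultants, and over the $\omega'$ simple pieces, we cover $D$ by finitely many curved trapezoids; the leftover set $Z$ — the vertical lines $x=\text{const}$ through the exceptional $x$-values, together with the graphs $y=\gamma_\ell(x)$ themselves — has planar Lebesgue measure zero. The total number $M$ of trapezoids is bounded by a product of the number of $x$-subintervals (controlled by $\deg Q$, hence by $n,r,\omega$) and the per-slice interval count (controlled by $n,r$), so $M=M(r,n,\omega)$.

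The main obstacle I anticipate is the bookkeeping in step three: ensuring that the functions cut out by the resolved root-branches are genuinely monotone \emph{and} that the resultant/discriminant construction keeps every partition point the zero of an honest polynomial in $x$ of controlled degree, so that the final count $M$ depends only on $r,n,\omega$ and not on the coefficients of the $P_k$. Degenerate configurations — identically vanishing leading coefficients, constraints that are vacuous on part of $[0,1]$, or branches that run off to $y=0$ or $y=1$ — must each be folded into the exceptional measure-zero set or absorbed by dropping the corresponding constraint, and checking that these degeneracies are themselves governed by vanishing of the auxiliary polynomial $Q$ is where the care is needed. Since this lemma is quoted from Phong--Stein--Sturm \cite{PSS2001}, we may simply cite it, but the sketch above indicates the structure of the argument.
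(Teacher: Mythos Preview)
Your proposal is correct and gives a complete argument, but it is organized differently from the outline the paper provides (following \cite{PSS2001}). The paper first factors each $Q_k=P_k-\lambda_k$ into irreducible factors $P_{k,l}$, then isolates three exceptional sets: $\Gamma_1$, the zero loci of factors depending on only one variable; $\Gamma_2$, the points on $\{P_{k,l}=0\}$ where the tangent is horizontal or vertical (i.e.\ $\partial_xP_{k,l}\cdot\partial_yP_{k,l}=0$); and $\Gamma_3$, the intersection points of distinct irreducible factors. B\'ezout's theorem bounds $\#\Gamma_2$ and $\#\Gamma_3$ in terms of $r,n$. One then removes all axis-parallel lines through $\Gamma_1\cup\Gamma_2\cup\Gamma_3$ (and through the boundary intersections), obtaining a grid of open boxes; inside each box the relevant zero curves are graphs of monotone continuous functions, and $D$ restricted to the box is a finite union of curved trapezoids.

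Your route reaches the same endpoint by eliminating $y$ rather than intersecting curves: you build a single polynomial $Q(x)$ from leading coefficients, pairwise resultants $\mathrm{Res}_y(P_i-\lambda_i,P_j-\lambda_j)$, discriminant-type factors $\mathrm{Res}_y(P_k-\lambda_k,\partial_yP_k)$, and boundary evaluations, and you partition only in $x$. The two mechanisms are essentially dual---the $x$-coordinates of the B\'ezout intersection points in $\Gamma_2,\Gamma_3$ are precisely the zeros of your resultants---so the content is the same, but your version avoids the preliminary factorization into irreducibles at the cost of the extra monotonicity pass in your step three (which the paper absorbs into $\Gamma_2$ via $\partial_xP_{k,l}=0$). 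Either way the count $M$ is controlled by degrees of auxiliary polynomials depending only on $r,n,\omega$. Since the lemma is cited, your sketch is more than adequate; just be aware that the paper's later use of the decomposition (in Lemma~\ref{van der Corput for trilinear} and Theorem~\ref{Th-main-3}) sometimes singles out the \emph{horizontal} lines in $\mathcal{L}$ separately, which is immediate in the paper's grid construction but would require a symmetric $y$-axis version of your argument.
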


For our application, we shall give an outline of proof of Lemma \ref{decomposition of algebraic domains}. For simplicity, we assume $D$ is a simple algebraic domain. The treatment for general algebraic domain is essentially the same as that for a simple algebraic domain. Then
$$ D=\l\{ (x,y)\in [0,1]^2 \l| P_k(x,y) \ge \lambda_k, 1\le k \le r \r. \r\}.$$
Let $Q_k=P_k-\lambda_k$ and decompose $Q_k$ as
\begin{equation}\label{decomposition of qk}
Q_k(x,y)=\prod_{l} \l(P_{k,l}(x,y) \r)^{m_l},
\end{equation}
where $Q_{k,l}$ are irreducible polynomials in $\bR[x,y]$, $m_l \ge 1$, and $P_{k,l_1}, P_{k,l_2}$ are relatively prime for $l_1 \ne l_2$. Let
$$ \Gamma_1=\bigcup \l\{(x,y)\in \bR^2 \l| P_{k,l}(x,y) =0 \r. \r\},$$
where the union is taken over all $P_{k,l}$ such that $P_{k,l}$ is a polynomial independent of $x$ or $y$. In other words, each $Q_{k,l}$ is a polynomial in only one variable. It is clear that $\Gamma_1$ consists of finitely many horizontal (vertical) straight lines. Of course, it is possible that $\Gamma_1=\emptyset$.

Define $\Gamma_2$ by
$$ \Gamma_2=\bigcup \l\{ (x,y)\in \bR^2 \l| P_{k,l}(x,y)=0,~~\pa_x  P_{k,l}(x,y) \cdot \pa_y  P_{k,l}(x,y)=0 \r. \r\},$$
where the union is taken over all factors appearing in (\ref{decomposition of qk}) such that both $\pa_x  P_{k,l}$ and $\pa_y  P_{k,l}$ are non-zero polynomials. In other words, $P_{k,l}$ is not a polynomial in only one variable. The equation $\pa_x  P_{k,l}(x,y) \cdot \pa_y  P_{k,l}(x,y)=0$ implies that either $\pa_x  P_{k,l}(x,y)=0$ or $\pa_y  P_{k,l}(x,y)=0$.

We also define $\Gamma_3$ by
$$ \Gamma_3=\bigcup \l\{ (x,y)\in \bR^2 \l| P_{k_1,l_1}(x,y)=P_{k_2,l_2}(x,y)=0 \r. \r\}, $$
where the union is taken over all polynomial factors $P_{k_1,l_1}$ and $P_{k_2,l_2}$, appearing in the factorization (\ref{decomposition of qk}), such that $P_{k_1,l_1}$ and $P_{k_2,l_2}$ are relatively prime.

It should be pointed out that if some $P_{k,l}$ has an isolated zero, say $(x_0,y_0)$, then it follows from the implicit function theorem that $\pa_x P_{k,l} (x_0,y_0) =0 $ and $\pa_y P_{k,l} (x_0,y_0) =0 $.

By B\'{e}zout's theorem, we see that $\Gamma_2$ and $\Gamma_3$ have finitely many points, and the number of points in $\Gamma_2$ and $\Gamma_3$ is bounded in terms of $r, n$ and $\omega$. Similarly, by the Fundamental theorem of algebra, it is clear that $\Gamma_1$ consists of finitely many straight lines parallel in the axes. The number is also bounded in terms of $r, n$, and $\omega$.

Let $Z(P_{k,l})$ be the zeros of $P_{k,l}$, i.e.,
$Z(P_{k,l}) =\l\{ (x,y)\in \bR^2 \l|  P_{k,l}(x,y)=0 \r. \r\}.$
Then $Z(P_{k,l})$ is a set of measure zero. Let $\cL$ be the collection of all $x$-parallel and $y$-parallel straight lines, which intersect the union $\Gamma_1 \bigcup \Gamma_2 \bigcup \Gamma_3$, or which contains the intersection of the boundary of $[0,1]^2$ and $\bigcup ( Z(P_{k,l}) )$. Then the number of straight lines in $\cL$ is bounded in terms of $r, n$, and $\omega$.  Then set $Z$ in Lemma \ref{decomposition of algebraic domains} can be defined by $ Z= \bigcup \l( Z(P_{k,l}) \r) \bigcup \cL.$
Then we can prove that there exists finitely many curved trapezoids $\Omega_1, \Omega_2, \cdots, \Omega_M$ such that
$$ D= \l(\bigcup_{i=1}^M \Omega_i \r) \bigcup Z.$$

Consider the following trilinear sublevel set operator
$$ K_\mu (f,g,h)=\int_D \chi_{\{|H(x,y)|\le \mu \} } f(x) g(y) h(x+y)\dif x \dif y,$$
where $D$ is an algebraic domain in $U$, and $\chi_{\{|H(x,y)|\le \mu \} }$ is the characteristic function of the sublevel set $ \{ (x,y) \in \bR^2 | |H(x,y)| \le \mu \}$. The function $H(x,y)$ is a polynomial in $x$ and $y$.

Our main result for $K_\mu$ is the following theorem.
\begin{theorem}\label{Th-main-2}
Let $H\in \bR[x,y]$ be a polynomial in $x$ and $y$ with real coefficients. Assume $\alpha^{(1)}, \alpha^{(2)}, \cdots, \alpha^{(N)}\in \bN^2\setminus \{(0,0)\}$ and define an algebraic domain by
$$ D=\l\{ (x,y) \in [0,1]^2 : | \pa^{\alpha^{(i)}} H(x,y)| \ge 1, 1\le i \le N \r\}.$$
Then there exists a constant $C>0$, depending only on $\deg H$, such that
\begin{equation}\label{Th-main-2-eq}
|K_\mu(f,g,h)| \le C \mu^{\frac{1}{2d}} \|f\|_2 \|g\|_2 \|h\|_2,
\end{equation}
where $d=\min \{|\alpha^{(i)}|: 1\le i \le N \}$.
\end{theorem}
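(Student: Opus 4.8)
The plan is to combine the decomposition of the algebraic domain $D$ into finitely many curved trapezoids (Lemma \ref{decomposition of algebraic domains}) with a one-dimensional sublevel estimate on each vertical slice, and then recombine everything through the size estimate for trilinear forms (Lemma \ref{size estimation}). First I would apply Lemma \ref{decomposition of algebraic domains}: since $D$ is an algebraic domain of type $(N, n, 2, \omega)$ with $n = \deg H$ and $\omega$ controlled by $N$, we may write $D = \bigcup_{i=1}^M \Omega_i \cup Z$ with $Z$ of measure zero, $M$ bounded in terms of $\deg H$ and $N$, and each $\Omega_i = \{(x,y): a_i < x < b_i, \ g_i(x) < y < h_i(x)\}$ a curved trapezoid with $g_i, h_i$ continuous and monotone. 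Since $Z$ contributes nothing to $K_\mu$, it suffices to bound $K_\mu$ with $D$ replaced by a single curved trapezoid $\Omega$, picking up only the factor $M$.

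Next, fix the index $i$ with $|\alpha^{(i)}| = d$ achieving the minimum, say $\alpha^{(i)} = (\alpha_1, \alpha_2)$ with $\alpha_1 + \alpha_2 = d$; without loss of generality assume $\alpha_2 \ge 1$ (the case $\alpha_1 \ge 1$ is symmetric, swapping the roles of $x$ and $y$, which also permutes the trilinear slots). For each fixed $x$, the function $y \mapsto \partial^{\alpha^{(i)}} H(x,y)$ has $\partial_y^{\alpha_2}$ of it equal to $\partial_y^{d}\partial_x^{\alpha_1} H(x,y)$... more to the point, on $\Omega$ we have $|\partial_y^{\alpha_2}\left(\partial_x^{\alpha_1} H\right)(x,y)| \ge 1$ is not quite what we know; rather we know $|\partial^{\alpha^{(i)}}H| \ge 1$ directly. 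The key sublevel fact I would use is the standard one-variable van der Corput / sublevel lemma (in the spirit of Lemma \ref{sup estimate} and \cite{Christ}): if $\phi$ is $C^d$ on an interval and $|\phi^{(d)}| \ge 1$ there, then $|\{y : |\phi(y)| \le \mu\}| \le C_d \mu^{1/d}$. Here one applies this after recognizing that controlling $|H(x,y)| \le \mu$ on a slice, where some $d$-th order derivative of $H$ is $\ge 1$, forces the $y$-slice of the sublevel set $\{|H(x,\cdot)| \le \mu\} \cap \Omega_x$ to have measure $\lesssim \mu^{1/d}$ — but one must be careful, since $\partial^{\alpha^{(i)}}H$ mixes $x$ and $y$ derivatives. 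To handle this cleanly I would instead argue: since $|\partial^{\alpha^{(i)}} H(x,y)| \ge 1$ on $\Omega$ and $\deg H$ is bounded, a Bézout-type count shows that for each fixed $x$ the set where $|H(x,y)| \le \mu$, intersected with the slice, splits into $O_{\deg H}(1)$ intervals, and on each the appropriate $y$-derivative of order $\le d$ of $H(x,\cdot)$ is bounded below, so the one-variable sublevel lemma gives each interval length $\lesssim \mu^{1/d}$. Hence every vertical slice of $\{|H| \le \mu\} \cap \Omega$ has one-dimensional measure $\lesssim \mu^{1/d}$; by the $x \leftrightarrow y$ symmetry (using the other trapezoid family if needed), every horizontal slice has measure $\lesssim \mu^{1/d}$ as well.

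Finally I would run the slicing argument for the trilinear form. Write $K_\mu(f,g,h) = \int \int E(x,y) f(x) g(y) h(x+y)\, dx\, dy$ where $E = \chi_{\{|H| \le \mu\}} \chi_\Omega$. Decompose $E$ into a bounded-overlap union of rectangles of dimensions $\mu^{1/d} \times \mu^{1/d}$ (possible because both horizontal and vertical slices of $E$ are short, so $E$ is contained in an $O(1)$-overlapping family of such squares — this is where the curved-trapezoid structure is essential, guaranteeing the slices are connected-in-$O(1)$-pieces so the covering is efficient), and apply Lemma \ref{size estimation} to each square to get the bound $\mu^{1/(2d)} \|f\|_2\|g\|_2\|h\|_2$ per square, then sum via Lemma \ref{almost orthogonality} (or a direct Cauchy–Schwarz as in its proof) to control the overlaps. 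The constant depends only on $\deg H$ (through $M$, the Bézout counts, $C_d$, and the overlap constant $N$), as claimed.

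The main obstacle I anticipate is the middle step: extracting a clean $\mu^{1/d}$ bound on the slices of $\{|H|\le\mu\}\cap\Omega$ from the hypothesis $|\partial^{\alpha^{(i)}}H|\ge 1$, because $\partial^{\alpha^{(i)}}$ is a mixed derivative and does not directly give a lower bound on a pure $y$-derivative of the restriction $H(x,\cdot)$. Resolving this requires either (a) a preliminary reduction, splitting $\Omega$ further so that on each piece one can choose coordinates or a pure derivative of order $\le d$ of $H$ is bounded below — which is exactly the kind of argument in Phong–Stein–Sturm \cite{PSS2001} — or (b) quantifying, via Bézout, how the bad set $\{\partial_y^\ell H(x,\cdot) \text{ small for all } \ell \le d\}$ has controlled $x$-fibers and can be absorbed. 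I expect to follow the Phong–Stein–Sturm strategy and push the $\partial^{\alpha^{(i)}}$ lower bound down to a one-variable $d$-th derivative bound on each curved trapezoid after at most $O_{\deg H}(1)$ further subdivisions.
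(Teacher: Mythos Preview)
Your starting move (reduce to a single curved trapezoid $\Omega$ via Lemma \ref{decomposition of algebraic domains}) matches the paper. The gap is in your second and third steps: the claim that both vertical and horizontal slices of $\{|H|\le\mu\}\cap\Omega$ have length $\lesssim \mu^{1/d}$ is false, and hence so is the covering by $\mu^{1/d}\times\mu^{1/d}$ squares with bounded overlap. Take $H(x,y)=x^d$, so $\alpha^{(i)}=(d,0)$ and $|\partial_x^d H|=d!\ge 1$ everywhere. Then $\{|H|\le\mu\}\cap[0,1]^2$ is the strip $[0,\mu^{1/d}]\times[0,1]$: horizontal slices have length $\mu^{1/d}$, but vertical slices have length $1$. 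You cannot cover this strip by $O(1)$-overlapping $\mu^{1/d}\times\mu^{1/d}$ squares; you need $\sim\mu^{-1/d}$ of them, and summing Lemma \ref{size estimation} over that many pieces blows up. The worry you flag at the end (mixed derivatives don't give a pure $y$-derivative bound) is exactly the source of this failure, and it cannot be repaired by further subdividing $\Omega$: the example is already a single rectangle.

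What the paper (following Phong--Stein--Sturm \cite{PSS2001}) uses instead is the correct consequence of the mixed-derivative hypothesis: any coordinate rectangle $R\subset\{|H|\le\mu\}\cap\Omega$ of dimensions $a\times b$ satisfies $a^{\alpha_1^{(i)}}b^{\alpha_2^{(i)}}\lesssim\mu$ (this comes from iterating Lemma \ref{sup estimate} in $x$ and $y$). This multiplicative constraint replaces your slice-length claim. The paper then decomposes $\Omega$ into $O_{\deg H}(1)$ curved right triangles, which in turn are covered (after a dyadic decomposition in both coordinates) by rectangles of \emph{varying} aspect ratio $2^{k/\alpha_1^{(i)}}\times 2^{l/\alpha_2^{(i)}}$ with $k+l\approx j$ at level $|H|\sim 2^j$. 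On each such rectangle Lemma \ref{size estimation} gives $\min\{2^{k/2\alpha_1^{(i)}},2^{l/2\alpha_2^{(i)}}\}\lesssim 2^{j/2|\alpha^{(i)}|}$, and Lemma \ref{almost orthogonality} sums these along the constraint $k+l\approx j$ without loss. This is worked out explicitly in the paper's Case 1 (the monomial $H=x^{\alpha_1}y^{\alpha_2}$), to which the general case is reduced.
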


\begin{remark}
We use the notation
$ \pa^{\alpha^{(i)}} H(x,y)= \pa_x^{\alpha^{(i)}_1} \pa_y^{\alpha^{(i)}_2} H(x,y),$
and $|\alpha^{(i)}|=\alpha^{(i)}_1+\alpha^{(i)}_2$ is the order of $\alpha^{(i)}$.
\end{remark}

\begin{proof}
For each $1\le i \le N$, we shall prove (\ref{Th-main-2-eq}) with $d$ replaced by
$|\alpha^{(i)}|$.\\

\noindent {\bf Case 1. $D= \{ (x,y) \in [0,1]^2: x^{\alpha^{(i)}_1} y^{\alpha^{(i)}_2} \le \mu \}. $}\\

\noindent In this case, $H(x,y)=x^{\alpha^{(i)}_1} y^{\alpha^{(i)}_2}$ and $\pa_x^{\alpha^{(i)}_1} \pa_y^{\alpha^{(i)}_2} H(x,y) = \alpha^{(i)}_1 ! \alpha^{(i)}_2 ! \geq 1$. It is clear that $D=U \bigcap \{ |H(x,y)| \le \mu \}$. Here $\mu$ is a positive number.

Let $j_\mu=\sup\{j\in \bZ: 2^j \le \mu \} $. Then $\mu \in [2^{j_\mu}, 2^{j_\mu+1})$. Hence we have
$$ \int_D |f(x)g(y)h(x+y)| \dif x \dif y \le \sum_{j \le j_\mu} \int_{D_j} |f(x)g(y)h(x+y)| \dif x \dif y,$$
where $D_j=\{(x,y)\in [0,1]^2: 2^j < x^{\alpha^{(i)}_1} y^{\alpha^{(i)}_2} \le 2^{j+1}\}$ for $j \in \bZ$.

If one component of $\alpha^{(i)}$ is equal to zero, say $\alpha^{(i)}_1=0$, then by Cauchy-Schwarz's inequality and Fubini's theorem,
\begin{align*}
\int_{D_j} |f(x)g(y)h(x+y)| \dif x \dif y \le & \l( \int_{D_j} |f(x)|^2 \dif x \dif y \r)^{\frac{1}{2}} \cdot \l( \int_{D_j} |g(y) h(x+y)|^2 \dif x \dif y \r)^{\frac{1}{2}} \\
\le & C 2^{\frac{j}{2\alpha^{(i)}_2}} \|f\|_2 \|g\|_2 \|h\|_2.
\end{align*}
It follows immediately that
$$ \int_D |f(x)g(y)h(x+y)| \dif x \dif y  \le C \mu^{\frac{1}{2\alpha^{(i)}_2}} \|f\|_2 \|g\|_2 \|h\|_2.$$

Now assume that $\alpha^{(i)}_1>0$ and $\alpha^{(i)}_2>0$. By a dyadic decomposition, we can see that
$$ \int_{D_j} |f(x)g(y)h(x+y)| \dif x \dif y \le \sum \int_{\{(x,y)\in [0,1]^2:~2^k \le x^{\alpha^{(i)}_1} \le 2^{k+1}, 2^l \le y^{\alpha^{(i)}_2} \le 2^{l+1}    \}} |f(x)g(y)h(x+y)| \dif x \dif y,$$
where the summation is taken over all $k,l \in \bZ$ satisfying $k,l \le 0$ and $j-1 \le k+l \le j+1$.
By the almost orthogonality principle in Lemma \ref{almost orthogonality}, we have
\begin{align*}
& \sup_{\|f\|_2, \|g\|_2, \|h\|_2 \le 1} \int_{D_j} |f(x)g(y)h(x+y)| \dif x \dif y \\
\le & \sup_{\|f\|_2, \|g\|_2, \|h\|_2 \le 1} \sup_{k,l} \int_{\{(x,y)\in [0,1]^2:~2^k \le x^{\alpha^{(i)}_1} \le 2^{k+1}, 2^l \le y^{\alpha^{(i)}_2} \le 2^{l+1}    \}} |f(x)g(y)h(x+y)| \dif x \dif y \\
\le & C \min\{2^{\frac{k}{2\alpha^{(i)}_1}}, 2^{\frac{l}{2\alpha^{(i)}_2}}\} \\
\le & C 2^{\frac{j}{2|\alpha^{(i)}|}}.
\end{align*}
where the supremum $\sup_{k,l}$ is taken over all negative integers $k,l $ satisfying $j-1 \le k+l \le j+1$.
Taking summation over all $j \le j_\mu$, we obtain the desired $L^2$ estimate. Since the inequality is true with $d$ replaced by $\alpha^{(i)}$ for each $1 \le i \le N$, it also holds for the growth rate $\frac{1}{d}$.\\

\noindent {\bf Case 2. $D$ is a general algebraic domain.}\\

\noindent By Lemma \ref{decomposition of algebraic domains}, we can decompose $D$ into finitely many curved trapezoids, up to a set of measure zero. Assume
$$ D=\l(\bigcup_{i=1}^M \Omega_i \r) \bigcup Z,$$
where each $\Omega_i$ is a curved trapezoid, and $Z$ has measure zero. By a similar argument as Phong, Stein, and Sturm \cite{PSS2001}, one has $a^{\alpha^{(i)}_1} b^{\alpha^{(i)}_2}\le \mu$ for each coordinate rectangle $R\in \Omega_j$ of dimensions $a$ by $b$. Moreover, as in \cite{PSS2001}, we can decompose each $\Omega_j$ into at most countable many curved right triangles, which satisfy the orthogonality assumption in Lemma \ref{almost orthogonality}. This reduces our estimate in Case 2 to Case 1.

Combining all above results, we have completed the proof of Theorem \ref{Th-main-2}.
\end{proof}

Now consider
$$\Lambda_\lambda(f,g,h)=\int \int e^{i\lambda S(x,y)}f(x)g(y)h(x+y) \varphi(x,y) \dif x \dif y ,$$
where $S\in \bR[x,y]$ is a polynomial in $x$ and $y$, and $\varphi(x,y)$ is a smooth cut-off near the origin. The problem then is the uniform estimate for $\Lambda_{\lambda}(f,g,h)$ as $\lambda \to \infty$. For convenience, define a linear oscillatory integral operator by
$$ T_\lambda f(x)= \int_{-\infty}^{\infty} e^{i\lambda P(x,y)} f(y) \varphi(x,y) \dif y,$$
where $P$ is a real-valued polynomial, and $\varphi\in C_0^\infty(\bR^2)$ is supported in a neighborhood of the origin. We need the following operator van der Corput lemma for $T_\lambda$; see \cite{PSS2001} for its proof.
\begin{lem}\label{van der Corput for T}
{\rm (\cite{PSS2001}~)}
Let $P(x,y)\in \bR[x,y]$ be a real polynomial in $x$ and $y$, $\Omega$ be a curved trapezoid in $\bR^2$, and for each $x\in (a,b)$, the cut-off function $\varphi(x,\cdot) \in C_0^\infty \l([g(x), h(x)]\r)$. Here $\Omega$ is given by
$$ \Omega=\l\{ (x,y)\in \bR^2 \l| a<y<b, g(x)<y<h(x) \r. \r\},$$
where $g(x)<h(x)$ for each $x\in (a,b)$. Assume the following conditions are true:\\
{\rm (i)} there exists two positive constants $\mu>0$ and $A\ge 1$ such that
$$ \mu \le S_{xy}^{\prime \prime} (x,y) \le A \mu, \quad (x,y)\in \Omega,$$
{\rm (ii)} for some $B>0$, it is true that
$$ \sup_\Omega \sum_{k=0}^{2} (\tau(x))^k | \pa_y^k \varphi(x,y)| \le B, $$
where $\tau(x)$ is the length of the cross section $\{y\in \bR |(x,y) \in \Omega\}. $
Define
$$ T_{\lambda,\Omega} f(x)= \int_{-\infty}^{\infty} e^{i\lambda S(x,y)} \chi_{\Omega}(x,y) \varphi(x,y) f(y) \dif y.$$
Then we have
$$ \|T_{\lambda,\Omega} f \|_2 \le C |\lambda \mu|^{-\frac{1}{2}} \|f\|_2,$$
where the constant depends only on $\deg S, A$ and $B$.
\end{lem}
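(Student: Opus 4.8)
The plan is to prove the lemma by the classical $TT^{*}$ argument, combined with Schur's test and non-stationary phase, exploiting the monotone curved-trapezoid structure of $\Omega$. Since $\|T_{\lambda,\Omega}\|^{2}=\|T_{\lambda,\Omega}T_{\lambda,\Omega}^{*}\|$, it suffices to bound the operator $T_{\lambda,\Omega}T_{\lambda,\Omega}^{*}$, whose Schwartz kernel is
\[ L(x,x')=\int_{\bR} e^{i\lambda[S(x,y)-S(x',y)]}\,\chi_{\Omega}(x,y)\chi_{\Omega}(x',y)\,\varphi(x,y)\overline{\varphi(x',y)}\dif y . \]
By Schur's test it is then enough to show $\int_{\bR}|L(x,x')|\dif x'\lea|\lambda\mu|^{-1}$ uniformly in $x$, together with the symmetric estimate in $x$. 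I would obtain this from two pointwise bounds on $L$.

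For the first (trivial) bound, observe that for fixed $x$ the cross section $\{y:(x,y)\in\Omega\}$ is the interval $(g(x),h(x))$; since $g$ and $h$ are monotone, the set of $y$ with $(x,y)\in\Omega$ and $(x',y)\in\Omega$ is again an interval $J(x,x')$, with $|J(x,x')|\le\min\{\tau(x),\tau(x')\}$. Using condition (ii) with $k=0$ this gives $|L(x,x')|\lea B^{2}|J(x,x')|$. For the oscillatory bound, write
\[ \pa_{y}\big[S(x,y)-S(x',y)\big]=(x-x')\int_{0}^{1}S_{xy}^{\prime\prime}\big(x'+t(x-x'),y\big)\dif t ; \]
monotonicity of $g$ and $h$ again guarantees that $\big(x'+t(x-x'),y\big)\in\Omega$ whenever $y\in J(x,x')$, so condition (i) yields $\mu|x-x'|\le\big|\pa_{y}[S(x,y)-S(x',y)]\big|\le A\mu|x-x'|$ on $J(x,x')$, with constant sign. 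Integrating by parts twice in $y$ — using condition (ii) to control the derivatives of the amplitude $\varphi(x,y)\overline{\varphi(x',y)}$, and the fact that $y\mapsto\pa_{y}[S(x,y)-S(x',y)]$ is a polynomial in $y$ of degree at most $\deg S-1$, so that a splitting of $J(x,x')$ into a bounded number of intervals of monotonicity together with Markov's inequality on those intervals controls its $y$-derivatives in terms of $\mu|x-x'|$ and $|J(x,x')|$ — one is led to a bound of the form
\[ |L(x,x')|\lea\frac{B^{2}}{(\lambda\mu|x-x'|)^{2}\,|J(x,x')|}. \]

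Finally I would combine the two bounds in Schur's test. The crucial structural fact is that, by the monotonicity of $g$ and $h$, the length $|J(x,x')|$ decreases at least linearly as $|x-x'|$ increases, and vanishes once $|x-x'|$ exceeds a multiple of $\tau(x)$. Splitting the $x'$-integral according to whether $|x-x'|$ is below or above the threshold where the trivial bound and the oscillatory bound balance, and using the trivial bound in the first range and the oscillatory bound in the second, one gets $\int|L(x,x')|\dif x'\lea B^{2}|\lambda\mu|^{-1}$; it is precisely here that keeping the $|x-x'|^{-2}$ decay produced by the \emph{second} integration by parts is essential, since a single integration by parts gives only $|x-x'|^{-1}$ and hence a spurious factor $\log|\lambda\mu|$. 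The symmetric estimate $\int|L(x,x')|\dif x\lea B^{2}|\lambda\mu|^{-1}$ follows identically (recall that the curved trapezoid $\Omega$ is bounded, so $\tau$ is uniformly bounded), and Schur's test then gives $\|T_{\lambda,\Omega}T_{\lambda,\Omega}^{*}\|\lea B^{2}|\lambda\mu|^{-1}$, i.e.\ $\|T_{\lambda,\Omega}f\|_{2}\lea B|\lambda\mu|^{-1/2}\|f\|_{2}$, with the constant depending only on $\deg S$, $A$ and $B$.

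I expect the main obstacle to be the oscillatory bound with the \emph{sharp} power of $\mu$: one must track, through both integrations by parts, the exact dependence of every amplitude term on $\mu$, $|x-x'|$ and $|J(x,x')|$, which is what forces the use of the scaled derivative hypotheses on $\varphi$ in (ii) and the bounded-degree (polynomial) structure of $S$ rather than mere smoothness, and then to arrange the Schur integral so that no logarithm survives. An alternative route, closer to Phong, Stein and Sturm, would be to first decompose the curved trapezoid into a controlled number of curved sub-boxes on which $S_{xy}^{\prime\prime}$ is essentially constant, prove a fixed-box oscillatory estimate on each, and sum using a bounded-overlap (almost-orthogonality) property of suitable dilations of the boxes; there the logarithm is absorbed by the orthogonality instead of by additional integrations by parts.
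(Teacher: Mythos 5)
The paper states this lemma without proof, citing \cite{PSS2001}, so your proposal can only be compared against what a correct proof must accomplish. Your $TT^{*}$/Schur framework and the two pointwise kernel bounds are reasonable in outline, but the crucial closing step is based on a false structural claim, and in fact the argument as written cannot avoid a logarithmic loss.

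The claim that ``the length $|J(x,x')|$ decreases at least linearly as $|x-x'|$ increases, and vanishes once $|x-x'|$ exceeds a multiple of $\tau(x)$'' is not true for a general curved trapezoid with monotone $g,h$. Take, for instance, $g\equiv 0$ and $h$ non-increasing with $h>0$ on $(a,b)$; then $J(x,x')=(0,h(\max(x,x')))$ is never empty and $|J(x,x')|=\tau(\max(x,x'))$ need not decay linearly. Worse, once this claim is dropped, your two pointwise bounds do not suffice for Schur's test: one has $\min\bigl(|J|,\ (\lambda\mu|x-x'|)^{-2}|J|^{-1}\bigr)\le (\lambda\mu|x-x'|)^{-1}$ with equality precisely when $|J(x,x')|\sim (\lambda\mu|x-x'|)^{-1}$, and one can legitimately choose $h$ (hence $\tau$) so that this happens for $|x-x'|$ ranging over an interval of length ratio $\sim\lambda\mu$. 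Concretely, with $g\equiv 0$, $h(x')=\min\bigl(1,\tfrac{1}{\lambda\mu(x'-a)}\bigr)$, and $x$ near $a+(\lambda\mu)^{-1}$, one finds $\int |L(x,x')|\,dx'\gtrsim B^{2}\lambda^{-1}\mu^{-1}\log(\lambda\mu)$ for the \emph{upper bound} on $|L|$, so the Schur estimate you would need, $\int |L|\,dx'\lesssim B^{2}(\lambda\mu)^{-1}$, cannot be extracted from the pointwise bounds alone. The second integration by parts does \emph{not} remove the logarithm here; it merely trades $|x-x'|^{-1}$ for $|x-x'|^{-2}|J|^{-1}$, and the regime where these two bounds coincide is exactly the one where the log accrues.

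In other words, the gap is at the level where almost-orthogonality must be invoked: one needs a decomposition of $\Omega$ into pieces on which a fixed-box van der Corput estimate holds, together with orthogonality between the pieces, to absorb the logarithm. You mention this as an ``alternative route,'' but that route is not an alternative — it is where the actual content lies, and it is what \cite{PSS2001} carries out. (Also, the sub-boxes should not be defined by where $S_{xy}^{\prime\prime}$ is essentially constant — hypothesis (i) already makes $S_{xy}^{\prime\prime}$ comparable to $\mu$ on all of $\Omega$ — but rather by the geometry of the trapezoid itself, so that the resulting operators are orthogonal in $x$ or in $y$.) Minor secondary point: your Markov-inequality step for $\partial_y^{2}[S(x,y)-S(x',y)]$ is sound, since $\partial_y[S(x,y)-S(x',y)]$ is a one-variable polynomial of bounded degree satisfying $\mu|x-x'|\le|\partial_y\Phi|\le A\mu|x-x'|$ on $J(x,x')$, and the Markov bound on $J$ gives $|\partial_y^{k+1}\Phi|\lesssim_{\deg S}|J|^{-k}\mu|x-x'|$ there; the issue is not this bound but the way you try to integrate it in $x'$.
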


\noindent As a consequence of Lemma \ref{van der Corput for T}, we have the following operator van der Corput lemma for trilinear oscillatory integrals.

\begin{lem}\label{van der Corput for trilinear}
Assume $S\in \bR[x,y]$ is a real polynomial in $x$ and $y$, and $\Omega$ is a curved trapezoid given by
$$ \Omega=\l\{ (x,y)\in \bR^2 \l| a<x<b, \alpha(y)<x<\beta(y) \r. \r\},$$
where $\alpha$ and $\beta$ are continuous monotone functions on $[a,b]$ such that $\alpha(y)<\beta(y)$ for all $y\in (a,b)$. Let $H(x,y)=\pa_x \pa_y (\pa_x-\pa_y)S(x,y).$ If the following conditions are true:\\

\noindent {\rm (i)} for each $y \in (a,b)$, $\varphi(\cdot,y)\in C_0^\infty\l([\alpha(y), \beta(y)]\r)$ and there exists a constant $B>0$ such that
$$ \sup_\Omega \sum_{k=0}^{2} (\tau(y))^k |\pa_x^k \varphi(x,y)| \le B,$$
where $\tau(y)$ is the length of the cross section $\{x\in\bR| (x,y)\in \Omega\}$;\\
{\rm(ii)} there exist two positive constants $\mu>0$ and $A\le 1$ such that
$$ \mu \le |H(x,y)| \le A\mu, \quad (x,y)\in \Omega,$$
then we have
$$ |\Lambda_{\lambda,\Omega} (f,g,h)| \le C|\lambda \mu|^{-\frac{1}{6}} \|f\|_2 \|g\|_2\|h\|_2,$$
where the constant $C>0$ depends only on $\deg S, A$ and B, and $\Lambda_{\lambda,\Omega} (f,g,h)$ is defined by
$$\Lambda_{\lambda,\Omega}(f,g,h)=\int \int e^{i\lambda S(x,y)}f(x)g(y)h(x+y) \chi_{\Omega}(x,y) \varphi(x,y) \dif x \dif y .$$
\end{lem}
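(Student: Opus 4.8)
The plan is to reduce the trilinear estimate $\Lambda_{\lambda,\Omega}$ to a linear $T_\lambda$ estimate to which Lemma~\ref{van der Corput for T} applies. First I would use the convolution structure: substituting $h(x+y)$ and thinking of $\Lambda_{\lambda,\Omega}(f,g,h)$ as $\langle T g, \bar f\rangle$ for an appropriate operator, or more precisely as a bilinear-in-$(f,g)$ pairing after freezing $h$, one sees that $|\Lambda_{\lambda,\Omega}(f,g,h)|$ is controlled by a $TT^*$-type quantity. The standard device (as in the proof of Lemma~\ref{oscillatory estimation} / the argument in \cite{xiao}) is to write the trilinear form in terms of the linear operator with phase built from $S$ and its translates so that a second-order mixed derivative produces exactly $H=\partial_x\partial_y(\partial_x-\partial_y)S$. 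Concretely, after a change of variables $(x,y)\mapsto(x, x+y)$ turning the convolution into an ordinary product, and then composing the resulting operator with its adjoint (an almost-orthogonality / $TT^*$ step), one arrives at an oscillatory integral operator in two variables whose phase is a difference $S(x,y)-S(x',y)-S(x,y')+\cdots$, whose crucial mixed second derivative equals $H$ evaluated at nearby points. This is where the hypothesis $\mu\le|H|\le A\mu$ enters and where the exponent $\tfrac16 = \tfrac12\cdot\tfrac13$ comes from: one gains $|\lambda\mu|^{-1/2}$ from van der Corput in the linear step, and the $TT^*$ composition costs a factor of $\tfrac13$ in the exponent (equivalently, two applications of Cauchy--Schwarz).

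Next, to invoke Lemma~\ref{van der Corput for T} one must verify its two hypotheses for the linearized operator. Hypothesis (i) there requires the lower bound $\mu\le S''_{xy}$ on a curved trapezoid; this is supplied by hypothesis (ii) of the present lemma (the sign of $H$ can be normalized, splitting $\Omega$ into the subregions where $H>0$ and $H<0$, and taking complex conjugates if necessary). The curved-trapezoid structure of $\Omega$ is preserved under the change of variables $(x,y)\mapsto(x,x+y)$ (it maps vertical lines to vertical lines and monotone graphs to monotone graphs), so the image is again a curved trapezoid and Lemma~\ref{van der Corput for T} may be applied on it. Hypothesis (ii) there, the amplitude derivative bound $\sup_\Omega\sum_{k\le 2}(\tau(x))^k|\partial^k_y\varphi|\le B$, follows from the present hypothesis (i) after the coordinate change, since $\tau$ is multiplied only by a bounded factor and the chain rule introduces only bounded constants (the Jacobian of $(x,y)\mapsto(x,x+y)$ is unimodular).

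The main obstacle I expect is the bookkeeping in the $TT^*$ / van der Corput reduction: one must show that the phase of the composed operator genuinely has its mixed second derivative comparable to $H$ on the relevant region (not merely at a point), and that the amplitude of the composed operator still satisfies a hypothesis-(ii)-type bound with constants depending only on $\deg S$, $A$, $B$. The comparability of $H$ at nearby points is exactly the content one needs the trapezoid to be ``thin'' for — but here no thinness is assumed, only the two-sided bound $\mu\le|H|\le A\mu$, so the argument should instead keep the phase and differentiate under the integral, using that $S$ is a polynomial of bounded degree so that all the needed Taylor remainders are controlled. An alternative, cleaner route — and the one I would actually carry out — is to mimic the scaling/rescaling proof of Lemma~\ref{oscillatory estimation}: rescale $\Omega$ to unit size, reduce to the case $\mu\sim 1$ via $\lambda\mapsto\lambda\mu$, and then apply Lemma~\ref{van der Corput for T} together with Lemma~\ref{almost orthogonality} to the rescaled operator, tracking that all constants depend only on $\deg S$, $A$, $B$. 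I would present the proof along these lines, deferring the elementary verification of the amplitude bounds under the coordinate change to a remark.
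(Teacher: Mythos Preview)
Your outline has a genuine gap: it misidentifies where the quantity $H=\partial_x\partial_y(\partial_x-\partial_y)S$ enters, and as a result misses the key step that produces the exponent $\tfrac16$. Note that the hypothesis of Lemma~\ref{van der Corput for T} is on the mixed \emph{second} derivative $\partial_x\partial_y$ of the phase, whereas the hypothesis here is on the \emph{third}-order object $H$. A linear change of variables such as $(x,y)\mapsto(x,x+y)$ cannot turn a bound on $H$ into a bound on the Hessian of $S$, so one cannot simply ``apply Lemma~\ref{van der Corput for T} to the rescaled operator'' as you suggest in your alternative route. Likewise, the account ``$\tfrac16=\tfrac12\cdot\tfrac13$ via two applications of Cauchy--Schwarz'' is not what happens.

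What the paper actually does is this. Set $T_\lambda(g,h)(x)=\int e^{i\lambda S(x,y)}g(y)h(x+y)\chi_\Omega\varphi\,dy$ and compute $\|T_\lambda(g,h)\|_2^2$. Expanding the square introduces a \emph{shift parameter} $u$ (writing $z=y+u$) and then the change $v=x+y$ gives an inner bilinear oscillatory integral in $(v,y)$ with phase $\widetilde S_u(v,y)=S(v-y,y)-S(v-y,y+u)$. The point is the identity
\[
\partial_v\partial_y\widetilde S_u(v,y)=u\int_0^1 H(v-y,\,y+\theta u)\,d\theta,
\]
so the two-sided bound on $H$ yields $\mu|u|\le|\partial_v\partial_y\widetilde S_u|\le A\mu|u|$. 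One checks that the new domain $\widetilde\Omega_u$ is again a curved trapezoid and that the amplitude bound transfers with constant $B^2$; then Lemma~\ref{van der Corput for T} gives the $u$-dependent bound $C(|\lambda|\mu|u|)^{-1/2}\|\widetilde g_u\|_2\|\widetilde h_u\|_2$. Combining this with the trivial bound $C\|\widetilde g_u\|_1\|\widetilde h_u\|_1\le C\|g\|_2^2\|h\|_2^2$ and summing dyadically in $u$,
\[
\|T_\lambda(g,h)\|_2^2\ \lesssim\ \sum_{j\in\bZ}\min\big\{(|\lambda\mu|2^j)^{-1/2},\,2^j\big\}\,\|g\|_2^2\|h\|_2^2\ \lesssim\ |\lambda\mu|^{-1/3}\|g\|_2^2\|h\|_2^2,
\]
and the square root plus duality against $f$ give $|\lambda\mu|^{-1/6}$. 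The factor of $u$ in the Hessian and the subsequent balancing of $(|\lambda\mu u|)^{-1/2}$ against the size estimate are the essential ideas your proposal is missing; without them the reduction to Lemma~\ref{van der Corput for T} does not close.
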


\begin{proof}
Let $T_\lambda (g,h)$ be defined by
$$ T_\lambda (g,h)(x)= \int_{-\infty}^{\infty} e^{i\lambda S(x,y)}g(y) h(x+y) \chi_{\Omega}(x,y) \varphi(x,y) \dif y.$$
Then it is clear that $\Lambda_{\lambda}(f,g,h)=\int_{\bR} T_\lambda(g,h)(x)f(x)\dif x$.

Now we compute the $L^2$ norm of $T_\lambda(g,h)$. Set $\varphi_{\Omega}(x,y)=\varphi(x,y)\chi_{\Omega}(x,y)$. Then we have
\begin{align}
&\int_{\bR} |T_\lambda (g,h)(x)|^2 \dif x \notag\\
=&\int_{\bR^3} e^{i\lambda[S(x,y)-S(x,z)]} g(y) h(x+y) \overline{g(z) h(x+z)} \varphi_{\Omega}(x,y) \overline{\varphi_{\Omega}(x,z)} \dif y \dif z \dif x \notag \\
=& \int_{\bR^3} e^{i\lambda[S(x,y)-S(x,y+u)]} g(y) \overline{g(y+u)} h(x+y) \overline{ h(x+y+u)} \varphi_{\Omega}(x,y) \overline{\varphi_{\Omega}(x,y+u)}\dif x \dif y\dif u \notag\\
=& \int_{\bR} \l[ \int_{\bR^2}     e^{i\lambda[S(v-y,y)-S(v-y,y+u)]} \w{g}_u(y) \w{h}_u(v)  \varphi_{\Omega}(v-y,y) \overline{\varphi_{\Omega}(v-y,y+u)}\dif v \dif y \r]  \dif u \label{eq3-3}
\end{align}
where we have made changes of variables $z=y+u, v=x+y$, and
\begin{eqnarray*}
\w{S}_u(v,y)&= &S(v-y,y)-S(v-y,y+u),\\
\w{g}_u(y)&= &g(y) \overline{g(y+u)}, \\
\w{h}_u(v)&= & h(v) \overline{ h(v+u)},\\
\w{\varphi}_u(v,y)&= &\varphi(v-y,y) \overline{\varphi(v-y,y+u)},
\end{eqnarray*}
for each $u\in \bR$. Then the inner integral with respect to $v$ and $y$ in (\ref{eq3-3}) can be written as
\begin{equation}\label{eq3-4}
\int_{\bR^2} e^{i\lambda \w{S}_u(v,y) }  \w{g}_u(y) \w{h}_u(v) \w{\varphi}_u(v,y) \chi_{\w{\Omega}_u}(v,y) \dif v \dif y.
\end{equation}
Let $\w{\Omega}_u$ be
\begin{align*}
\w{\Omega}_u =& \l\{(v,y)\in \bR^2 \l| a<y<b, a<y+u<b, \alpha(y)< v-y <\beta(y), \alpha(y+u)< v-y <\beta(y+u) \r. \r\} \\
=& \l\{(v,y)\in \bR^2 \l| \w{a}_u<y<\w{b}_u, \w{\alpha}_u(y)< v <\w{\beta}_u(y) \r. \r\},
\end{align*}
where
\begin{eqnarray*}
\w{a}_u&=&\max\{a,a-u\}, \\
\w{b}_u&=&\min\{b,b-u\}, \\
\w{\alpha}_u(y)&=&y+\max\{\alpha(y), \alpha(y+u)\}, \\
\w{\beta}_u(y)&=&y+\min\{\beta(y), \beta(y+u)\}.
\end{eqnarray*}
Assume $\w{a}_u<\w{b}_u$. Then both $\w{\alpha}_u$ and $\w{\beta}_u$ are monotone continuous functions on $[\w{a}_u, \w{b}_u]$. And for each $y\in (\w{a}_u, \w{b}_u)$, the cut-off function $\w{\varphi}(\cdot, y)$ is a smooth function supported in the horizontal cross section $\{v \in \bR | (v,y)\in \w{\Omega}_u \}$. Moreover,
\begin{align*}
 \sup_{\w{\Omega}_u} \sum_{k=0}^{2} (\w{\tau}_u(y))^k |\pa_v^k \w{\varphi}_u (v,y)|
\le  \l(\sup_\Omega \sum_{k=0}^{2} (\tau(y))^k |\pa_x^k \varphi (x,y)|  \r)^2
\le  B^2.
\end{align*}
Here $\w{\tau}_u(y)$ is the length of the interval $\{v\in \bR | (v, y)\in \w{\Omega}_u \}$.

On the other hand, it is clear that
$$ \pa_v \pa_y \l[S(v-y,y)-S(v-y,y+u)\r]= u \int_{0}^{1} H(v-y, y+\theta u) \dif \theta.$$
Hence, by our assumption (ii),
$$ \mu |u| \le \l|\pa_v \pa_y \l[S(v-y,y)-S(v-y,y+u)\r]\r|\le A\mu |u|.$$
By Lemma \ref{van der Corput for T}, the integral in (\ref{eq3-4}) is bounded by
\begin{align*}
& \l|\int_{\bR^2} e^{i\lambda \w{S}_u(v,y) }  \w{g}_u(y) \w{h}_u(v) \w{\varphi}_u(v,y) \chi_{\w{\Omega}_u}(v,y) \dif v \dif y \r| \\
\le & C\l(|\lambda|\mu |u|\r)^{-\frac{1}{2}} \|\w{g}_u(\cdot)\|_2 \|\w{h}_u(\cdot)\|_2,
\end{align*}
where $C>0$ depends only on $\deg S, A$, and $B$. Taking the absolute value into the integral, we also have
\begin{align*}
& \l|\int_{\bR^2} e^{i\lambda \w{S}_u(v,y) }  \w{g}_u(y) \w{h}_u(v) \w{\varphi}_u(v,y) \chi_{\w{\Omega}_u}(v,y) \dif v \dif y \r| \\
\le & C \|\w{g}_u(\cdot)\|_1 \|\w{h}_u(\cdot)\|_1,
\end{align*}
where $C>0$ depends only on $B$. By the Cauchy-Schwarz inequality, $\|\w{g}_u(\cdot)\|_1\le \|g\|_2^2$ and also $\|\w{g}_u(\cdot)\|_1 \le \|h\|_2^2$.

Now we are able to estimate $\|T_\lambda (g,h) \|_2^2$. In fact, we make use of the above $L^2$ and $L^1$ estimates together with Cauchy-Schwarz's inequality to obtain
\begin{align*}
\|T_\lambda (g, h) \|_2^2 \le & \int_{\bR} \l| \int_{\bR^2} e^{i\lambda \w{S}_u(v,y) }  \w{g}_u(y) \w{h}_u(v) \w{\varphi}_u(v,y) \chi_{\w{\Omega}_u}(v,y) \dif v \dif y \r| \dif u \\
= & \sum_{j\in \bZ} \int_{2^j}^{2^{j+1}} \l| \int_{\bR^2} e^{i\lambda \w{S}_u(v,y) }  \w{g}_u(y) \w{h}_u(v) \w{\varphi}_u(v,y) \chi_{\w{\Omega}_u}(v,y) \dif v \dif y \r| \dif u \\
\le & C \sum_{j\in \bZ} \min\{\l(|\lambda \mu| 2^j \r)^{-\frac{1}{2}}, 2^j  \} \|g\|_2^2 \|h\|_2^2 \\
\le & C |\lambda \mu|^{-\frac{1}{3}} \|g\|_2^2 \|h\|_2^2,
\end{align*}
where $C$ depends only on $\deg S, A$ and $B$. This implies $\|T_\lambda (g, h) \|_2 \le C |\lambda \mu|^{-\frac{1}{6}} \|g\|_2 \|h\|_2$. By duality, the estimate for $\Lambda_{\lambda, \Omega}(f,g,h)$ follows immediately. This completes the proof of Lemma \ref{van der Corput for trilinear}.
\end{proof}

Now we can state our main result in this section.
\begin{theorem}\label{Th-main-3}
Assume $S(x,y)\in \bR[x,y]$ is a real polynomial. Let $H(x,y)=\pa_x \pa_y (\pa_x -\pa_y) S(x,y)$, and $\alpha^{(1)}, \alpha^{(2)}, \cdots, \alpha^{(N)}\in \bN^2$. Suppose that $\varphi \in C_0^\infty(U)$ and
$$| \pa^{\alpha^{(i)}} H(x,y)| \ge 1, \quad 1\le i \le N,$$
on the support of $\varphi$. Then $\Lambda_\lambda(f,g,h)$ satisfies the inequality
$$  |\Lambda_\lambda(f,g,h)| \le C |\lambda|^{-\frac{1}{2(3+d)}} \|f\|_2 \|g\|_2 \|h\|_2$$
with $d=\min\{|\alpha^{(i)}|: 1\le i\le N\} $. Here the constant $C$ depends only on $\deg S$ and the cut-off function $\varphi$.
\end{theorem}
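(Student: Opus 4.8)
The plan is to combine the sublevel set estimate of Theorem~\ref{Th-main-2}, the operator van der Corput lemma (Lemma~\ref{van der Corput for trilinear}), and the decomposition of algebraic domains (Lemma~\ref{decomposition of algebraic domains}) via a dyadic decomposition in the size of $H$. Since $\varphi$ is supported in $U=[0,1]^2$, set $d=\min\{|\alpha^{(i)}|\}$ and write $\Lambda_\lambda = \sum_{l} \Lambda_\lambda^{(l)}$, where $\Lambda_\lambda^{(l)}$ inserts a smooth cutoff localizing to the dyadic band $\{|H(x,y)|\sim 2^l\}$ for $l\le 0$, plus a single piece for $\{|H|\gtrsim 1\}$. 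The trivial (size) bound from Lemma~\ref{size estimation} together with the sublevel estimate gives $\|\Lambda_\lambda^{(l)}\|\lesssim 2^{l/(2d)}$ after summing the geometric series in the sublevel bands below $2^l$ (using Theorem~\ref{Th-main-2} with $\mu\sim 2^l$), while the oscillatory bound from Lemma~\ref{van der Corput for trilinear}, applied on each curved trapezoid where $2^{l-1}\le |H|\le 2^l$, gives $\|\Lambda_\lambda^{(l)}\|\lesssim (|\lambda|2^l)^{-1/6}$. Choosing the crossover $2^{l_0}\sim |\lambda|^{-1/(3+d)}$ and summing $\min\{2^{l/(2d)}, (|\lambda|2^l)^{-1/6}\}$ over dyadic $l$ yields $|\Lambda_\lambda|\lesssim |\lambda|^{-1/(2(3+d))}$.

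More precisely, first I would fix a partition $1=\sum_{l\le 0}\zeta_l(t)+\zeta_+(t)$ with $\zeta_l$ supported in $|t|\sim 2^l$, and set $\Lambda_\lambda^{(l)}(f,g,h)=\iint e^{i\lambda S}fgh\,\varphi\,\zeta_l(H(x,y))\,dx\,dy$. The key point is that the cutoff $\varphi(x,y)\zeta_l(H(x,y))$ is supported in an algebraic domain of type $(r,n,2,\omega)$ with parameters controlled by $\deg S$ (the constraints $|H|\le 2^{l+1}$ and the original $|\partial^{\alpha^{(i)}}H|\ge1$), so Lemma~\ref{decomposition of algebraic domains} decomposes it into $O_{\deg S}(1)$ curved trapezoids $\Omega_1,\dots,\Omega_M$ plus a null set. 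On each such $\Omega_s$, after verifying the cutoff derivative bounds — here I would mollify $\zeta_l(H)$ appropriately, or simply note that on the trapezoid one can replace $\zeta_l(H)$ by the characteristic function $\chi_{\Omega_s}$ since the factorized trapezoid already carries the support information — Lemma~\ref{van der Corput for trilinear} with $\mu\sim 2^l$, $A$ an absolute constant, applies and gives the oscillatory bound $\lesssim(|\lambda|2^l)^{-1/6}$ for that piece. Since $M=O(1)$, summing over $s$ preserves the bound for $\Lambda_\lambda^{(l)}$. For the non-oscillatory bound, I would observe that $\sum_{l'\le l}\Lambda_\lambda^{(l')}$ is an integral over the sublevel set $\{|H|\lesssim 2^l\}\cap\{|\partial^{\alpha^{(i)}}H|\ge1\}$, so Theorem~\ref{Th-main-2} (applied with the oscillatory factor dropped via the triangle inequality) gives $\|\sum_{l'\le l}\Lambda_\lambda^{(l')}\|\lesssim 2^{l/(2d)}$; in particular $\|\Lambda_\lambda^{(l)}\|\lesssim 2^{l/(2d)}$ as well (or one absorbs this into the tail directly).

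The summation is then routine: with $2^{l_0}=|\lambda|^{-1/(3+d)}$, one has
$$
|\Lambda_\lambda|\le \Big\|\sum_{l\le l_0}\Lambda_\lambda^{(l)}\Big\| + \sum_{l_0<l\le 0}\|\Lambda_\lambda^{(l)}\| + \|\Lambda_\lambda^{(+)}\|
\lesssim 2^{l_0/(2d)} + \sum_{l_0<l\le 0}(|\lambda|2^l)^{-1/6} + |\lambda|^{-1/6},
$$
and both of the first two terms are $\lesssim |\lambda|^{-1/(2(3+d))}$ (the geometric sum being dominated by its largest term $l\sim l_0$), while the last term is even smaller since $3+d\ge 3$. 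Here I use the routine fact that $\frac{1}{2d}\cdot\frac{1}{3+d}\cdot(3+d)$... more simply, $2^{l_0/(2d)}=|\lambda|^{-1/(2d(3+d))}$ is not quite the target, so in fact the correct crossover is where $2^{l/(2d)}=(|\lambda|2^l)^{-1/6}$, i.e. $2^{l_0}\sim|\lambda|^{-1/(3+d)\cdot d/d}$ — solving $\frac{l}{2d}=-\frac16\log_2|\lambda|-\frac l6$ gives $l_0\big(\frac{1}{2d}+\frac16\big)=-\frac16\log_2|\lambda|$, hence $2^{l_0/(2d)}=|\lambda|^{-\frac{1}{6}\cdot\frac{1}{2d}/(\frac{1}{2d}+\frac16)}=|\lambda|^{-\frac{1}{2(3+d)}}$, exactly the claimed exponent.

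The main obstacle I anticipate is the bookkeeping at the interface between the two lemmas: Lemma~\ref{van der Corput for trilinear} requires the cutoff on each curved trapezoid to satisfy the scaled derivative bounds $\sup_\Omega\sum_{k=0}^2(\tau(y))^k|\partial_x^k\varphi|\le B$ with $B$ uniform, and the factor $\zeta_l(H(x,y))$ is not automatically well-behaved in $x$ at this scale; one must either exploit that $H$ is comparable to a monomial-type quantity on the trapezoid (so $\zeta_l\circ H$ is effectively constant there) or invoke a Bernstein-type estimate to bound $\partial_x^k(\zeta_l\circ H)$ on the cross-sections. Keeping all implied constants depending only on $\deg S$ and $\varphi$ — and in particular checking that the algebraic-domain parameters $(r,n,\omega)$ in Lemma~\ref{decomposition of algebraic domains} are controlled purely by $\deg S$ — is the delicate part; everything else is a geometric-series computation.
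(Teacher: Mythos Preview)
Your proposal is correct and follows essentially the same route as the paper: a dyadic decomposition in $|H|$, the sublevel bound $2^{l/(2d)}$ from Theorem~\ref{Th-main-2}, the oscillatory bound $(|\lambda|2^l)^{-1/6}$ from Lemma~\ref{van der Corput for trilinear} applied on the $O_{\deg S}(1)$ curved trapezoids furnished by Lemma~\ref{decomposition of algebraic domains}, and the same geometric-series crossover yielding $|\lambda|^{-1/(2(3+d))}$. The technical point you flag about the derivative bounds on $\zeta_l\circ H$ is treated in the paper just as briefly as you propose, relying on the structure of the Phong--Stein--Sturm trapezoid decomposition.
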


\begin{proof}
Choose a smooth function $\phi \in C_0^\infty \l([\frac12, 2]\r)$ such that $\sum_{j \in \bZ} \phi(\frac{x}{2^j})=1$ for all $x>0$. Let $\Lambda_\lambda^{(j)}(f,g,h)$ be defined as $\Lambda_\lambda(f,g,h)$ with insertion of $\phi\l(\frac{|H(x,y)|}{2^j}\r)$ in the cut-off. In other words,
$$ \Lambda_\lambda^{(j)}(f,g,h)= \int \int e^{i\lambda S(x,y)}f(x)g(y)h(x+y) \phi\l(\frac{|H(x,y)|}{2^j}\r) \varphi(x,y) \dif x \dif y,~~~j\in\bZ. $$

Now consider the simplest case $d=0$. By removing only the horizontal lines appearing in the proof of Lemma \ref{decomposition of algebraic domains}, we can decompose the following algebraic domain
$$ D_j= \l\{ (x,y)\in U \l| 2^{j-1}\leq |\pa^{\alpha^{(i)}} H(x,y)| \leq 2^{j+1} \r. \r\}$$
into finitely many curved trapezoids described in Lemma \ref{van der Corput for trilinear}. Moreover, the cut-off function of $\Lambda_\lambda^{(j)}(f,g,h)$, with the vertical variable fixed, is compactly supported in the corresponding horizontal cross section of each curved trapezoid mentioned above. Hence we can apply Lemma \ref{van der Corput for trilinear} to obtain $ \l|\Lambda_\lambda^{(j)}(f,g,h)\r| \le C \l(|\lambda|2^j \r)^{-\frac{1}{6}} \|f\|_2 \|g\|_2 \|h\|_2$ for $j\geq 0$. Taking summation over $j$, we obtain the desired estimate.

Now we focus on the general case $d>0$. By Theorem \ref{Th-main-2}, we obtain
$$ \l|\Lambda_\lambda^{(j)}(f,g,h)\r| \le C 2^{\frac{j}{2d}} \|f\|_2 \|g\|_2 \|h\|_2.$$
On the other hand, we can apply Lemma \ref{van der Corput for trilinear} to get
$$ \l|\Lambda_\lambda^{(j)}(f,g,h)\r| \le C \l(|\lambda|2^j \r)^{-\frac{1}{6}} \|f\|_2 \|g\|_2 \|h\|_2.$$
Hence it follows that
\begin{align*}
\l|\Lambda_\lambda(f,g,h)\r| \le & \sum_{j\in \bZ} \l|\Lambda_\lambda^{(j)}(f,g,h)\r| \\
\le & C \sum_{j\in \bZ} \min\{2^{\frac{j}{d}}, \l(|\lambda|2^j \r)^{-\frac{1}{6}} \} \|f\|_2 \|g\|_2 \|h\|_2 \\
\le & |\lambda|^{-\frac{1}{2(3+d)}} \|f\|_2 \|g\|_2 \|h\|_2.
\end{align*}
This completes the proof of the theorem.
\end{proof}

\noindent Inspired by an observation in Gressman \cite{gressman2} for bilinear oscillatory integral operators, we can prove the following theorem by a similar argument in this section.
\begin{theorem}
Assume $S\in \bR[x,y]$ is a real polynomial, and $H(x,y)=\pa_x \pa_y (\pa_x -\pa_y) S(x,y)$. If $\phi \in C_0^\infty \l([\frac12, 2]\r)$, then
\begin{align*}
& \l|\int_{[0,1]^2} e^{i\lambda S(x,y)} f(x)g(y)h(x+y) \phi\l(\frac{H(x,y)}{\mu} \r) \dif x \dif y \r| \\
\le & C |\lambda \mu|^{-\frac{1}{6}} \|f\|_2 \|g\|_2 \|h\|_2
\end{align*}
for all $\mu \in \bR \setminus \{0\}$, where the constant $C$ depends only on $\deg S$ and the cut-off function $\phi$.
\end{theorem}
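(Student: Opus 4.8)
The plan is to follow the scheme of the proof of Theorem~\ref{Th-main-3}: after normalization, localize to the region where $|H|$ has size $\mu$, decompose it into boundedly many curved trapezoids, and apply the operator van der Corput lemma (Lemma~\ref{van der Corput for trilinear}) on each. Since the cut-off here is $\phi(H/\mu)$ rather than a fixed bump, the whole difficulty is to make the hypotheses of Lemma~\ref{van der Corput for trilinear} hold with constants that do not depend on $\mu$.

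First I would normalize. Replacing $(S,\lambda)$ by $(-S,-\lambda)$ turns $H$ into $-H$ and leaves $\lambda S$, $|\lambda\mu|$ and $\deg S$ unchanged, so we may assume $\mu>0$; and if $|\lambda\mu|<1$ the asserted bound follows from Lemma~\ref{size estimation} on $[0,1]^2$, so we may assume $|\lambda\mu|\ge1$. The support of $(x,y)\mapsto\phi(H(x,y)/\mu)$ in $[0,1]^2$ then lies in the simple algebraic domain $D=\{(x,y)\in[0,1]^2:\,H(x,y)\ge\mu/2,\ -H(x,y)\ge-2\mu\}$, of type $(2,\deg H)$, on which $\mu/2\le H\le 2\mu$. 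Next I would run the Phong--Stein--Sturm decomposition of Lemma~\ref{decomposition of algebraic domains} on $D$, but with the exceptional set enlarged so that it also contains the zero sets of $\pa_x^k H$ for $1\le k\le\deg_x H$; by the $x\leftrightarrow y$ symmetry of $\Lambda_\lambda$ (under which $|H|$ is unchanged) I run a second, analogous decomposition using $\pa_y^k H$ for the trapezoids that are graphs over the $x$-axis. These are finitely many polynomials of degree $\le\deg S$, so the number of resulting curved trapezoids stays bounded in terms of $\deg S$, and on each trapezoid $\Omega=\{a<y<b,\ \alpha(y)<x<\beta(y)\}$ every $\pa_x^k H$ has constant sign while $\mu/2\le H\le 2\mu$.

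The key step is to verify hypotheses (i)--(ii) of Lemma~\ref{van der Corput for trilinear} on each such $\Omega$, with the smooth cut-off taken to be $\phi(H/\mu)$ (the sharp factor $\chi_\Omega$ built into the lemma provides the localization in the cross sections, so $\phi(H/\mu)$ need not vanish on the sides of $\Omega$, exactly as in the proof of Theorem~\ref{Th-main-3}). Hypothesis (ii) is immediate with $\mu/2$ in place of $\mu$ and $A=4$. For hypothesis (i) I must bound $\tau(y)\,|\pa_x H|$ and $\tau(y)^2\,|\pa_x^2 H|$ on $\Omega$ by a constant times $\mu$, where $\tau(y)=\beta(y)-\alpha(y)$ is the length of the cross section $I_y$. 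Here I would use the elementary fact, in the spirit of Gressman's observation, that if $p$ is a real polynomial of degree $\le m$ on an interval $I$ all of whose derivatives $p,p',\dots,p^{(m)}$ have constant sign on $I$, then $|I|\,\sup_I|p|\le 4^m\int_I|p|$ (by induction on $m$: the inductive hypothesis bounds $\sup_I|p'|$, whose integral is the total variation of $p$, hence at most $2\sup_I|p|$, so $p$ stays above half of its maximum on a subinterval of length $\gtrsim_m|I|$). Applying this with $p=\pa_x^j H(\cdot,y)$ on $I_y$, whose $x$-derivatives all have constant sign, gives $\tau(y)\sup_{I_y}|\pa_x^j H|\lesssim_{\deg S}\sup_{I_y}|\pa_x^{j-1}H|$, and iterating down to $j=0$ yields $\tau(y)^j\sup_{I_y}|\pa_x^j H|\lesssim_{\deg S}\sup_{I_y}|H|\le 2\mu$ for $j=1,2$. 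Since $\pa_x[\phi(H/\mu)]=\phi'(H/\mu)\pa_x H/\mu$ and $\pa_x^2[\phi(H/\mu)]=\phi''(H/\mu)(\pa_x H/\mu)^2+\phi'(H/\mu)\pa_x^2 H/\mu$, hypothesis (i) holds with a constant $B$ depending only on $\|\phi\|_{C^2}$ and $\deg S$. Then Lemma~\ref{van der Corput for trilinear} gives $|\Lambda_{\lambda,\Omega}(f,g,h)|\le C|\lambda\mu|^{-1/6}\|f\|_2\|g\|_2\|h\|_2$ for each trapezoid with $C=C(\deg S,\phi)$, and summing over the boundedly many trapezoids and the two orientations finishes the proof.

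I expect the main obstacle to be precisely this uniform-in-$\mu$ control of the derivatives of $\phi(H/\mu)$: the crude estimate $|\pa_x\phi(H/\mu)|\le\|\phi'\|_\infty|\pa_x H|/\mu$ is worthless unless one knows $|\pa_x H|\lesssim\mu/\tau(y)$ on each piece, and securing this is what forces both the refinement of the decomposition (removing the zero sets of \emph{all} $x$-derivatives of $H$, not just of $H$ itself) and the constant-sign polynomial estimate above. The remaining points --- producing the trapezoids in the orientation required by Lemma~\ref{van der Corput for trilinear}, and handling the sides of $\Omega$ through the sharp cut-off --- are routine and already appear in the proof of Theorem~\ref{Th-main-3}.
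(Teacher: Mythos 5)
The paper does not actually write out a proof for this theorem; it gives only the one-line remark that it follows ``by a similar argument in this section,'' i.e.\ by the scheme of Theorem~\ref{Th-main-3}: localize to $\{|H|\sim\mu\}$, decompose into curved trapezoids via Lemma~\ref{decomposition of algebraic domains}, and apply Lemma~\ref{van der Corput for trilinear} on each piece. Your proposal carries out precisely this scheme and, importantly, supplies an ingredient the paper's remark suppresses: hypothesis~(i) of Lemma~\ref{van der Corput for trilinear} demands $\tau$-weighted $C^2$ bounds $\tau(y)^k|\partial_x^k(\phi(H/\mu))|\lesssim 1$ uniformly in $\mu$, and since $\partial_x(\phi(H/\mu))$ carries a factor $\partial_xH/\mu$, this is genuinely nontrivial. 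Your resolution---refine the decomposition so that every $\partial_x^kH$ keeps a fixed sign on each trapezoid, then iterate the constant-sign polynomial inequality $|I|\sup_I|p|\le 4^m\int_I|p|$ to get $\tau(y)^j\sup_{I_y}|\partial_x^jH|\lesssim_{\deg S}\mu$ for $j=1,2$---is correct and produces a constant depending only on $\deg S$ and $\phi$, which is exactly what the statement requires. The one point you slide over is compact support: after cutting also along the curves $\{\partial_x^kH=0\}$ (and, since no smooth $\varphi$ is present here, along the sides of $[0,1]^2$), the lateral boundaries $\alpha(y),\beta(y)$ of a trapezoid need not be level sets $\{H=\mu/2\}$ or $\{H=2\mu\}$, so $\phi(H/\mu)(\cdot,y)$ is generally not in $C_0^\infty([\alpha(y),\beta(y)])$, which Lemma~\ref{van der Corput for trilinear} as stated requires (this is precisely what the paper's proof of Theorem~\ref{Th-main-3} arranges by cutting only along horizontal lines). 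Your appeal to ``the sharp factor $\chi_\Omega$'' is really an appeal to the Phong--Stein--Sturm operator van der Corput lemma in its sharp cut-off form, which is true and is what \cite{PSS2001} actually prove, but it is stronger than what Lemmas~\ref{van der Corput for T} and~\ref{van der Corput for trilinear} record; you should either cite that sharp form explicitly or insert a $y$-uniform smooth bump in $x$ obeying the same $\tau$-weighted derivative bounds (which your constant-sign estimate already makes available). With that sentence added the argument is complete.
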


\end{document}